\newtheorem{theorem}{Theorem}[section]
\newtheorem{lemma}[theorem]{Lemma}
\newtheorem{proposition}[theorem]{Proposition}
\newtheorem{corollary}[theorem]{Corollary}
\theoremstyle{definition}
\newtheorem{definition}[theorem]{Definition}
\newtheorem{remark}[theorem]{Remark}
\numberwithin{equation}{section}
\begin{document}
\setcounter{page}{1}

\title[ Subelliptic sharp G\r{a}rding inequality ]{ Subelliptic sharp G\r{a}rding inequality  on compact Lie groups}

\author[D. Cardona]{Duv\'an Cardona}
\address{
  Duv\'an Cardona:
  \endgraf
  Department of Mathematics: Analysis, Logic and Discrete Mathematics
  \endgraf
  Ghent University, Belgium
  \endgraf
  {\it E-mail address} {\rm duvan.cardonasanchez@ugent.be}
  }
  
 \author[S. Federico]{Serena Federico}
\address{
  Serena Federico:
  \endgraf
  Department of Mathematics: Analysis, Logic and Discrete Mathematics
  \endgraf
  Ghent University, Belgium
  \endgraf
  {\it E-mail address} {\rm serena.federico@ugent.be}
  }

\author[M. Ruzhansky]{Michael Ruzhansky}
\address{
  Michael Ruzhansky:
  \endgraf
  Department of Mathematics: Analysis, Logic and Discrete Mathematics
  \endgraf
  Ghent University, Belgium
  \endgraf
 and
  \endgraf
  School of Mathematical Sciences
  \endgraf
  Queen Mary University of London
  \endgraf
  United Kingdom
  \endgraf
  {\it E-mail address} {\rm michael.ruzhansky@ugent.be, m.ruzhansky@qmul.ac.uk}
  }

\subjclass[2010]{Primary {22E30; Secondary 58J40}.}

\keywords{Sub-Laplacian, Compact Lie group, Pseudo-differential operator, Fourier analysis, G\r{a}rding type inequalities}

\thanks{The authors are supported  by the FWO  Odysseus  1  grant  G.0H94.18N:  Analysis  and  Partial Differential Equations and by the Methusalem programme of the Ghent University Special Research Fund (BOF)
(Grant number 01M01021). Michael Ruzhansky is also supported  by the EPSRC grant 
EP/R003025/2.\\
This project has received funding from the European Union’s Horizon 2020 research and the innovation programme under the Marie Sk\l odowska-Curie grant agreement No 838661.}

\begin{abstract}
In this work  we  establish a subelliptic sharp G\r{a}rding inequality on compact Lie groups for pseudo-differential operators with symbols belonging to global subelliptic H\"ormander classes.
In order for the inequality to hold we require the global matrix-valued symbol to satisfy the suitable classical nonnegativity condition in our setting. Our result extends to $\mathscr{S}^m_{\rho,\delta}(G)$-classes, $0\leq \delta<\rho$, the one in \cite{RuzhanskyTurunen2011} about the validity of the sharp G\r{a}rding inequality for the class $\mathscr{S}^m_{1,0}(G)$. We remark that the result we prove here is already new and sharp in the case of the torus.

\end{abstract} \maketitle
\allowdisplaybreaks
\tableofcontents
\section{Introduction}
\subsection{Outline and historical remarks}
In this work we establish the sharp G\r{a}rding inequality for pseudo-differential operators with symbols in the global subelliptic H\"ormander classes on compact Lie groups \cite{CardonaRuzhanskyC}. 
As a byproduct we obtain the extension to the global H\"ormander classes $\mathscr{S}^m_{\rho,\delta}(G),$ for all $0\leq \delta<\rho\leq 1$, of the sharp G\r{a}rding inequality proved in \cite{RuzhanskyTurunen2011} for the Kohn-Nirenberg  classes $\mathscr{S}^m_{1,0}(G)$.

Before describing in detail the main result of this paper concerning operators on compact Lie groups, let us briefly go back to the Euclidean case and describe the celebrated microlocal result which inspired the analysis of the problem we consider here.

G\r{a}rding's type inequalities have played a crucial role in the study of several problems related with partial and pseudo-differential operators. These inequalities are $L^2$-lower bounds which can be applied, in different contexts, to obtain results about the existence and uniqueness of solutions of differential and pseudo-differential equations. The starting point in the investigation of these fundamental lower bounds was the celebrated work of G\r{a}rding \cite{Garding1953} in which he proved the so called G\r{a}rding inequality for elliptic operators:\\
 
{\it Let $P$ be an elliptic self-adjoint pseudo-differential operator of order $m$ on an open set $\Omega\subset\mathbb{R}^n$, then, for any $\mu<m/2$ and any compact $K\subset\Omega$, there exist two positive constants $c_{\mu,K}$ and $C_{\mu,K}$ such that}\footnote{We denote by $H^s$ the standard Sobolev space of order $s$ defined as the completion of $C^\infty_0(\mathbb{R}^n)$ with respect to the norm $\Vert u\Vert_{H^s}:=\Vert (1-\Delta)^{\frac{s}{2}} u\Vert_{L^2}$, where $\Delta$ is the standard Laplacian on $\mathbb{R}^n$.}
\begin{equation}\label{GI}
      (Pu,u)\geq c_{\mu,K} \| u\|^2_{H^\frac{m}{2}}-C_{\mu,K}\|u\|_{H^{\mu}}^2,\quad \forall u\in C_0^\infty(K).
\end{equation}

Inequality \eqref{GI} was used by G\r{a}rding to derive the existence of solutions of the Dirichlet problem for elliptic operators as well as to study the distribution of the eigenvalues. However, in order to deal with non-elliptic problems, some refinements of the latter are needed. 
In particular, H\"ormander proved in \cite{Hormander1966} the following {\it Sharp G\r{a}rding inequality} for operators with symbols having nonegative real part:\\

{\it Let $P$ be a pseudo-differential operator of order $m$ defined on an open set $\Omega\subset\mathbb{R}^n$, and let $p\in S^m_{1,0}(\Omega)$ be its symbol. If $\mathsf{Re}(p(x,\xi))\geq 0$ for all $(x,\xi)\in T^*\Omega\setminus0$, then for any compact subset $K\subset \Omega$ there exists a constant $C_K>0$ such that}
\begin{equation}\label{SGI}\mathsf{Re}(Pu,u)\geq -C_K\| u\|^2_{\frac{m-1}{2}},\quad \forall u\in C_0^\infty(K).
\end{equation}
After \eqref{SGI} some generalizations were proved, that is, specifically, the suitable version for operators with symbols in $S^m_{\rho,\delta}(\mathbb{R}^n)$, and the corresponding version for systems (see, for instance, \cite{KG}).

Note that the previous inequalities are both based on a sign property of the symbol, and that no geometric property of the characteristic set is taken into account.

Further improvements of \eqref{SGI} have been established by means of a deeper analysis of the geometry of the characteristic set and of the invariants associated with the operators, like, for instance, the principal and the subprincipal symbol. In this direction we have the Melin inequality proved in \cite{Melin71} and the H\"ormander inequality proved in \cite{Hormander1977}, the latter improved by Parenti and Parmeggiani in \cite{Parenti-Parmeggiani2000} (see also \cite{Parenti-Parmeggiani1998}). A somehow different approach was adopted by Fefferman and Phong in \cite{FeffermanPhong78} where they derived the sharpest result only by requiring the nonnegativity of the {\it total symbol} of the operator.
For a survey about the fundamental lower bounds mentioned above we refer to \cite{Parmeggiani2018}.

Let us stress that these refinements not only allow the study of nonelliptic operators, but can also be used to obtain microlocal energy estimates leading to results on propagation of singularities (see H\"ormander \cite{HormanderNP}). Additionally, the (sharp) G\r{a}rding inequality and its generalizations become a fundamental tool to analyze the existence of solutions of a wide class of boundary value problems (like the $\overline{\delta}$-Neumann problem), and to investigate the global solvability of evolution problems and the local well-posedness of the Cauchy problem for evolution equations.

Let us also mention that some results, both positive and negative, about the validity of some of the fundamental lower bounds mentioned above are known for systems as well, and we refer the interested reader to \cite{Parmeggiani2004, Parmeggiani2013} for an overview of this topic. In the setting of compact Lie groups the validity of such estimates is more delicate to analyze.   Indeed, only some of the lower bounds presented above have been proved so far.
For instance, in the case of a compact Lie group $G$, and more generally on manifolds, \eqref{GI} remains valid for operators with symbols in the usual $S^m_{\rho,\delta}(M)$ H\"ormander classes, where  $0\leq \delta <\rho\leq 1,$ and $\rho\geq 1-\delta$.

We recall that on closed manifolds $(\rho,\delta)$-pseudo-differential operators are well defined provided that  $0\leq \delta <\rho\leq 1,$ $\rho\geq 1-\delta,$ and that, under these assumptions, the global classes $\mathscr{S}^m_{\rho,\delta}(G)$ defined in \cite{Ruz} coincide with the usual H\"ormander classes on compact manifolds. However bear in mind that the $\mathscr{S}^m_{\rho,\delta}(G)$-classes are defined for all $0\leq \delta <\rho\leq 1$ without the restriction $\rho\geq 1-\delta$.

On compact Lie groups, \eqref{GI} for global symbols in the classes $\mathscr{S}^m_{\rho,\delta}(G),$ for $0\leq \delta <\rho\leq 1$ (the whole range), was proved by the third author and J. Wirth in \cite{RuzhanskyWirth2014} and extended for subelliptic classes of pseudo-differential operators by the first and third author in \cite[Page 96]{CardonaRuzhanskyC}. We refer the reader to \cite[Page 27]{CKRT2021I} for G\r{a}rding type inequalities on smooth manifolds, with or without  boundary, using global symbol criteria.

As for the sharp G\"arding inequality in the general manifold setting, since a condition on the whole symbol (and not only on the principal symbol) is needed, this makes the result far reaching in this generality (symbols are not invariantly defined on manifolds, while the principal symbol is).
However, by using the description of H\"ormander classes $\Psi^m_{\rho,\delta}(G,\mathrm{loc}),$  $0\leq \delta <\rho\leq 1,$ $\rho\geq 1-\delta,$ in terms of global symbols defined on the phase space $G\times \widehat{G}$ \footnote{Here, $\widehat{G}$ is the unitary dual of $G$ which consists of all equivalence classes $[\xi]$ of irreducible, unitary continuous representations of $G,$ $\xi\in\textnormal{End}(G,H_\xi)$ on a finite dimensional vector space $H_\xi\cong \mathbb{C}^{d_\xi}.$} (see \eqref{EQequivalence} for details about these classes of operators),  the third author and Turunen proved in \cite{RuzhanskyTurunen2011} the following sharp G\"arding inequality on any compact Lie group:
\medskip

{\it Let $A\in \Psi^{m}_{1,0}(G,\mathrm{loc})$ be such that the almost positivity condition $a(x,[\xi])\geq 0$ on the global (matrix-valued) symbol of $A$ holds true, then}
\begin{equation}\label{SGIRGLapl}
    \mathsf{Re}(Au,u)_{L^2(G)}\geq -C\Vert u\Vert_{H^{\frac{m-1}{2}}(G)},\quad \forall u\in C^\infty(G).
\end{equation} 

We remark again that, in contrast with \eqref{GI}, this result requires a condition on the global symbol of $A$. This is a nontrivial difference since results involving conditions on the principal symbol only can be easily extended to manifolds, while results requiring conditions on the total symbol are, in general, not (yet) available in the manifold setting.

In the present paper we will focus on the validity of what we shall call {\it subelliptic sharp G\r{a}rding inequality}, that is, on the suitable formulation of the sharp G\r{a}rding inequality on compact Lie groups for pseudo-differential operators with symbols belonging to global subelliptic classes.
The aforementioned subelliptic classes, and the corresponding pseudo-differential calculus, are developed in \cite{CardonaRuzhanskyC} by using the sub-Riemannian structure of an arbitrary compact Lie group $G$ and the matrix-valued quantization developed in \cite{Ruz}.

Since any sub-Riemannian structure on $G$ is encoded in terms of a positive sub-Laplacian $\mathcal{L}$ over a compact Lie group $G,$ the global subelliptic H\"ormander classes of symbols in \cite{CardonaRuzhanskyC} were denoted by $S^{m,\,\mathcal{L}}_{\rho,\delta}(G\times \widehat{G}),$ where $m\in \mathbb{R}$ and $0\leq \delta<\rho\leq 1.$   

\subsection{The subelliptic sharp G\r{a}rding inequality}
The statement of our main result, that is of the subelliptic sharp G\r{a}rding inequality on a compact Lie group $G$, is given in Theorem \ref{MainTheorem} below. Here and in the rest of the paper we shall denote by  $H^{s,\,\mathcal{L}}(G)$, for $s\in \mathbb{R}$, the subelliptic Sobolev space of order $s$ associated with a fixed positive sub-Laplacian $\mathcal{L}$, that is, the space defined as the completion of $C^\infty(G)$ with respect to the norm $\Vert u\Vert_{H^{s,\,\mathcal{L}}(G)}:=\Vert (1+\mathcal{L})^{\frac{s}{2}}u\Vert_{L^2(G)}.$
\begin{theorem}[Subelliptic sharp G\r{a}rding inequality]\label{MainTheorem} Let $G$ be a compact Lie group and let $\mathcal{L}=\mathcal{L}_X\footnote{$\mathcal{L}_X:=-X_1^2-X_2^2-\cdots - X_{k}^2.$}$ be the (positive) sub-Laplacian associated with a system  $X=\{X_{i}\}_{i=1}^{k}$ of left-invariant vector fields  satisfying H\"ormander's condition of step $\kappa$ \footnote{which means that the vector fields $X_1,\cdots, X_{k},$ together with their commutators of length at most $\kappa$ span the Lie algebra $\mathfrak{g}$ of $G$ (under the identification $\mathfrak{g}\cong T_{e}G$, with $e\in G$ being the neutral element). If $\mathcal{L}_X$ is a positive Laplacian, we trivially have $\kappa=1.$}. For $0< \rho\leqslant 1$ and  $0\leq \delta<(2\kappa-1)^{-1}\rho$, and for $m\in \mathbb{R}$, let  $$A\equiv a(x,D):C^\infty(G)\rightarrow\mathscr{D}'(G)$$ be a continuous linear operator with global symbol  $a\in {S}^{m,\,\mathcal{L}}_{\rho,\delta}( G\times \widehat{G})$. Then, if $a(x,[\xi])\geq 0$ for all $(x,[\xi])\in G\times \widehat{G}$, there exists a positive constant $C$ such that
\begin{equation}\label{SGIT}
    \mathsf{Re}(Au,u)\geq -C\Vert u\Vert_{H^{\frac{m-\frac 1 \kappa(\rho-(2\kappa-1)\delta)}{2},\,\mathcal{L}}(G)}^2,
\end{equation}
for all $u\in C^{\infty}(G)$, where $(\cdot,\cdot)$ stands for the $L^2$-scalar product.
\end{theorem}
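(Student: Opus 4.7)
The plan is to extend to the subelliptic setting the Friedrichs symmetrisation strategy that underlies the proof of the sharp G\r{a}rding inequality for $\mathscr{S}^{m}_{1,0}(G)$ in \cite{RuzhanskyTurunen2011}. Given $a\in {S}^{m,\,\mathcal{L}}_{\rho,\delta}(G\times\widehat{G})$ with $a(x,[\xi])\geq 0$, the goal is to build an auxiliary symbol $\widetilde{a}$ whose associated operator $\widetilde{A}$ satisfies $\mathsf{Re}(\widetilde{A}u,u)\geq 0$ for every $u\in C^\infty(G)$ and such that the remainder symbol $r:=a-\widetilde{a}$ belongs to the subelliptic class ${S}^{m-\delta_0,\,\mathcal{L}}_{\rho,\delta}(G\times\widehat{G})$ with $\delta_0:=\frac{1}{\kappa}(\rho-(2\kappa-1)\delta)$. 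The subelliptic Sobolev boundedness of $(\rho,\delta)$-classes established in \cite{CardonaRuzhanskyC} then bounds $|(Au,u)-(\widetilde{A}u,u)|$ by a constant multiple of $\|u\|_{H^{(m-\delta_0)/2,\,\mathcal{L}}(G)}^{2}$, and combining this with the non-negativity of $\widetilde{A}$ produces \eqref{SGIT}.

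For the construction of $\widetilde{a}$ I would adapt the anti-Wick/coherent-state recipe to the non-commutative dual $\widehat{G}$ through a Littlewood--Paley decomposition adapted to $\mathcal{L}$. Pick a dyadic partition $1=\sum_{j\geq 0}\psi_j(\lambda)$ of $[0,\infty)$ with $\psi_j$ supported where $\lambda\sim 2^{2j/\kappa}$, so that on the spectral projector $\Psi_j:=\psi_j(\mathcal{L})$ the subelliptic weight $\langle\xi\rangle_{\mathcal{L}}:=(1+\lambda_{\xi,\mathcal{L}})^{1/2}$ is comparable to $2^{j/\kappa}$. On the $j$-th block choose a length scale $h_j\sim \langle\xi\rangle_{\mathcal{L}}^{-\alpha}$ for a parameter $\alpha$ to be optimised, and set
\begin{equation*}
\widetilde{a}(x,[\xi]):=\sum_{j\geq 0}\Psi_{j}([\xi])\int_{G}\chi_{j}(x,y)\,a(y,[\xi])\,dy,
\end{equation*}
where $\chi_{j}(x,y)\geq 0$ is a smooth approximate identity on $G$, symmetric in $(x,y)$, normalised by $\int_{G}\chi_{j}(x,y)\,dy=1$ and concentrated in the Carnot--Carath\'eodory ball of radius $h_j$ centred at $x$.

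The non-negativity $\mathsf{Re}(\widetilde{A}u,u)\geq 0$ is then the Friedrichs argument applied fibrewise: for each $[\xi]$ the matrix $a(y,[\xi])$ is positive semidefinite, integration against the symmetric non-negative kernel $\chi_{j}$ produces a positive semidefinite form on each dyadic shell, and the self-adjoint projectors $\Psi_{j}$ together with the almost-orthogonality of the Littlewood--Paley pieces transfer this to non-negativity of $\widetilde{A}$ on $L^{2}(G)$.

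The core difficulty, and the step I expect to be most delicate, is the symbolic estimate for the remainder $r$. A Taylor expansion of $a(y,[\xi])$ about $y=x$ in subelliptic normal coordinates kills the zeroth-order term via $\int\chi_{j}=1$ and the first-order term via the symmetry of $\chi_{j}$, so the leading contribution is quadratic in the Carnot--Carath\'eodory displacement and weighted by $h_{j}^{2}\sim \langle\xi\rangle_{\mathcal{L}}^{-2\alpha}$. Each horizontal derivative of $a$ loses only $\langle\xi\rangle_{\mathcal{L}}^{\delta}$, but a generic (non-horizontal) derivative loses $\langle\xi\rangle_{\mathcal{L}}^{(2\kappa-1)\delta}$ through the subelliptic interpolation built into the classes ${S}^{m,\,\mathcal{L}}_{\rho,\delta}$ of \cite{CardonaRuzhanskyC}; this is exactly where the factor $2\kappa-1$ enters. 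Optimising $\alpha$ so as to balance the Taylor gain $\langle\xi\rangle_{\mathcal{L}}^{-2\alpha}$ against the worst-case second-order $y$-derivative loss forces the choice that produces the announced gain $\delta_0$, and the full verification of the difference and derivative estimates defining ${S}^{m-\delta_0,\,\mathcal{L}}_{\rho,\delta}$ (both for horizontal $X$-derivatives and for the matrix-valued difference operators on $\widehat{G}$) is the principal technical bookkeeping of the proof. The strict constraint $\delta<(2\kappa-1)^{-1}\rho$ in the hypotheses is exactly what ensures $\delta_0>0$, i.e.\ that this optimisation produces a genuine gain.
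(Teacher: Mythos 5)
Your high-level plan (mollify the symbol in $x$ on a $\xi$-dependent scale, kill the first-order Taylor term by symmetry, and recover the gain from the second-order remainder) is the right Friedrichs-type idea and is the same general strategy the paper follows. However, there is a genuine gap in the positivity step. You define $\widetilde{a}(x,[\xi])$ as a \emph{symbol} obtained by averaging $a(\cdot,[\xi])$ in the $x$-variable and then assert that $\mathrm{Re}(\widetilde{A}u,u)\geq 0$ for $\widetilde{A}=\mathrm{Op}(\widetilde{a})$. This does not follow from the argument given: the quadratic form $u\mapsto (\mathrm{Op}(\widetilde{a})u,u)$ is not nonnegative merely because the matrices $\widetilde{a}(x,[\xi])$ are, and indeed if such an implication held for smoothed symbols it would essentially trivialize the entire sharp G\r{a}rding problem. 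What \emph{is} positive is the \emph{amplitude} operator whose kernel has the Gram factorisation $\int_G w_\xi(xz^{-1})\,w_\xi(yz^{-1})\,\sigma_A(z,\xi)\,dz$; the paper's Proposition~\ref{P} proves positivity precisely by rewriting $(Pu,u)$ as an integral of traces of manifestly positive conjugated matrices $M(z,\xi)\xi(z)\sigma_A(z,\xi)\xi(z)^*M(z,\xi)^*$. The operator with symbol equal to the diagonal $p(x,x,\xi)$ is only a lower-order perturbation of the amplitude operator (this is Lemma~\ref{Lemma2}), and the operator with the original symbol is in turn a lower-order perturbation of $\mathrm{Op}(p(x,x,\cdot))$ (Lemma~\ref{Lemma1}). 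Your construction collapses these two remainders into one and, more importantly, drops the amplitude structure on which the positivity rests.

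Two further, secondary issues. First, you build your partition using $\Psi_j([\xi])=\psi_j(\widehat{\mathcal{L}}(\xi))$, but the symbol of a sub-Laplacian does not commute with a general matrix symbol $a(y,[\xi])$, so the "transfer of positivity through the self-adjoint projectors" is exactly where the noncommutativity, which the paper explicitly flags as the main obstruction, would bite; the paper sidesteps this by scaling the weight $w_\xi$ with the \emph{elliptic} quantity $\langle\xi\rangle^{(\rho+\delta)/(2\kappa)}$ (whose symbol is central) rather than with $\widehat{\mathcal{M}}(\xi)$. Second, your explanation of the factor $2\kappa-1$ (as a derivative loss "through subelliptic interpolation") is not the actual mechanism: in the paper the gain comes from $|q_\gamma(z)|\lesssim\langle\xi\rangle^{-(\rho+\delta)/\kappa}$ on $\mathrm{supp}\,w_\xi$, converted to $\langle\nu_{ii}(\xi)\rangle^{-(\rho+\delta)/\kappa}$ via $\langle\xi\rangle^{1/\kappa}\lesssim\langle\nu_{ii}(\xi)\rangle$, combined with the $2\delta$ loss from the second-order Taylor remainder, and $2\delta-(\rho+\delta)/\kappa=-\tfrac{1}{\kappa}\bigl(\rho-(2\kappa-1)\delta\bigr)$.
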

As a consequence of the previous result we obtain in Corollary \ref{Corolario} below the {\it elliptic} sharp G\r{a}rding inequality for operators with symbols in the H\"ormander classes $\mathscr{S}^{m}_{\rho,\delta}(G)$ (corresponding to the case when $\kappa=1$). 

Corollary \ref{Corolario} extends the main result in \cite{RuzhanskyTurunen2011} holding for operators with symbols belonging to the particular class $\mathscr{S}^{m}_{1,0}(G)$.
\begin{corollary}[Elliptic sharp G\r{a}rding Inequality]\label{Corolario}  For $0\leqslant \delta<\rho\leqslant 1,$  let $A\equiv a(x,D):C^\infty(G)\rightarrow\mathscr{D}'(G)$ be a continuous linear operator with symbol  $a\in \mathscr{S}^{m}_{\rho,\delta}( G)$, $m\in \mathbb{R}$. Let us assume that $a(x,[\xi])\geq 0$ for every $(x,[\xi])\in G\times \widehat{G}.$ Then, there exists $C>0$, such that
\begin{equation}
    \mathsf{Re}(Au,u)\geq -C\Vert u\Vert_{H^{\frac{m-(\rho-\delta)}{2}}(G)}^2
\end{equation}
for all $u\in C^{\infty}(G).$
\end{corollary}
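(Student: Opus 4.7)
The plan is to deduce Corollary~\ref{Corolario} directly from Theorem~\ref{MainTheorem} by specializing the subelliptic setup to the elliptic case $\kappa=1$.

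First, I would fix a system $X=\{X_{1},\dots,X_{n}\}$ of left-invariant vector fields forming a basis of the Lie algebra $\mathfrak{g}$, for instance an orthonormal basis with respect to a bi-invariant inner product on $G$. Such a system trivially satisfies H\"ormander's condition with step $\kappa=1$, and the corresponding positive sub-Laplacian $\mathcal{L}=\mathcal{L}_X=-X_{1}^{2}-\cdots-X_{n}^{2}$ is in fact a (positive) elliptic Laplacian on $G$, comparable to the Laplace--Beltrami operator.

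Next, I would invoke the identifications that hold in this elliptic situation. When $\mathcal{L}$ is elliptic, the weight governing the subelliptic class $S^{m,\,\mathcal{L}}_{\rho,\delta}(G\times\widehat{G})$ reduces to the standard one attached to the full Laplacian, whence $S^{m,\,\mathcal{L}}_{\rho,\delta}(G\times\widehat{G})=\mathscr{S}^{m}_{\rho,\delta}(G)$ with equivalent seminorms; likewise, the operators $(1+\mathcal{L})^{s/2}$ and $(1-\Delta_{G})^{s/2}$ are of the same order, so that $H^{s,\,\mathcal{L}}(G)=H^{s}(G)$ with equivalent norms. Both identifications are built into the calculus developed in \cite{CardonaRuzhanskyC} (together with the matrix-valued symbolic framework from \cite{Ruz}) and therefore require no extra argument here.

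Finally, I would apply Theorem~\ref{MainTheorem} with the above choice. The admissibility range $\delta<(2\kappa-1)^{-1}\rho$ collapses to $\delta<\rho$, matching exactly the hypothesis of the corollary, while the Sobolev exponent
$\tfrac{1}{2}\bigl(m-\tfrac{1}{\kappa}(\rho-(2\kappa-1)\delta)\bigr)$
simplifies to $(m-(\rho-\delta))/2$, yielding the claimed lower bound. The only conceptually delicate point in the reduction is the coincidence of the subelliptic and classical frameworks at $\kappa=1$ described in the previous paragraph; since this is part of the machinery of \cite{CardonaRuzhanskyC}, it does not constitute a genuine obstacle.
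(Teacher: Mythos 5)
Your proposal is correct and coincides with the paper's own reasoning: Corollary~\ref{Corolario} is obtained exactly by specializing Theorem~\ref{MainTheorem} to the case where $X$ is a basis of $\mathfrak{g}$, so that $\kappa=1$, $\mathcal{L}_X$ is an elliptic Laplacian with $\widehat{\mathcal{M}}(\xi)=\langle\xi\rangle I_{d_\xi}$, $S^{m,\,\mathcal{L}}_{\rho,\delta}(G\times\widehat{G})=\mathscr{S}^{m}_{\rho,\delta}(G)$, $H^{s,\,\mathcal{L}}(G)=H^{s}(G)$, the admissibility constraint reduces to $\delta<\rho$, and the Sobolev exponent reduces to $(m-(\rho-\delta))/2$. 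This is precisely the remark the authors make immediately after the corollary, so your argument is a faithful (and complete) reconstruction of it.
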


Let us briefly discuss some immediate consequences of our main Theorem \ref{MainTheorem}.
\begin{remark}
Notice that when $\mathcal{L}_X=\Delta$ is the Laplacian on the group (that is $X=\{X_i\}_{i=1}^{n}$ is a basis of the Lie algebra $\mathfrak{g}$, or, equivalently, $\kappa=1$), then Theorem \ref{MainTheorem} provides the sharp G\r{arding inequality} for operators with nonnegative symbols in the  classes  $ \mathscr{S}^{m}_{\rho,\delta}( G)$ with $0\leq \delta<\rho\leq 1$ (see Corollary \ref{Corolario}).
This, in particular, shows that our result extends the one in \cite{RuzhanskyTurunen2011} where the {\it elliptic} Sharp G\r{a}rding inequality (namely for the standard global non subelliptic symbols defined in \cite{Ruz}) was proved only for $(\rho,\delta)=(1,0)$. More remarkably, the result applies to all  $\mathscr{S}^m_{\rho,\delta}(G)$ classes of global symbols with $0\leq \delta<\rho\leq 1$, and not only to those where $\rho\geq 1-\delta$ and corresponding to the standard H\"ormander classes.
\end{remark}
\begin{remark}
Observe that the {\it elliptic} Sharp G\r{a}rding inequality for $\mathscr{S}^m_{\rho,\delta}(G)$-classes with $(\rho,\delta)$ in the whole admissible range (see Corollary \ref{Corolario}), is already a new result in the case of the torus $G=\mathbb{T}^n$. As remarked above, the standard local theory allows to consider a restricted range for $\rho$ and $\delta$, therefore our result is much better that the one possibly obtainable trough the local theory, since here the parameters $\rho$ and $\delta$ can be taken in the full range $0\leq \delta<\rho\leq 1$, allowing also the case where  $\rho< 1-\delta.$ 
\end{remark}
As regards the purely subelliptic seeting, the appearence of the parameter $\kappa$ (which is related to the subelliptic order of the fixed sub-Laplacian) in the subelliptic Sobolev norms in \eqref{SGIT}, is dictated by the combination of the noncommutativity of the group and the noncommutativity property of the symbols of sub-Laplacians. Indeed, while the Laplacian is a central operator having matrix-valued global symbol commuting with any other symbol, no sub-Laplacian has the same commutativity properties. Therefore, in order not to restrict our analysis to very particular subclasses of symbols, we combined {\it elliptic} with {\it subelliptic} strategies. 

Note that, as observed by the third author and Fischer in \cite{FisherRuzhansky2013}, the same problem arises in the nilpotent Lie group setting. In fact, due the intrinsic noncommutativity and subellipticity of the setting, in \cite{FisherRuzhansky2013} the sharp G\r{a}rding inequality in the nilpotent setting is announced for very  particular operators, that is, roughly speaking, for those commuting with the fixed sub-Laplacian.

We want to stress that no such commutativity condition is assumed in Theorem \ref{MainTheorem}. However there is a price to pay to work in this general framework, price that is given by a restriction on the classes to which the result applies, namely those such that $0\leq \delta<(2\kappa-1)^{-1}\rho$. Note that the case $(\rho,\delta)=(1,0)$ is covered by our result. 

Let us mention that even imposing a commutativity condition in the same spirit as in \cite{FisherRuzhansky2013}, the use of the global pseudo-differential subelliptic calculus does not lead to the expected result (that is \eqref{SGIT} without the appearence of $\kappa$ in the Sobolev norm), and that the expeceted result for very special classes can most probably be reached via the global functional calculus.

More remarks about the strenght of our result in the purely subelliptic setting are given in Section \ref{FinalRemarks}. There we also show that our subelliptic result does not follow from the elliptic one.

We now conclude this introduction by giving the plan of the paper.
\begin{itemize}
    \item In Section \ref{Sect2} we recall some basic facts about pseudo-differential operators on compact Lie groups, we  recall the subelliptic global symbol classes in \cite{CardonaRuzhanskyC} that are the object of our analysis. At the end of the section an analysis of amplitude subelliptic operators is consistently developed.
    \item In Section \ref{Sect3} we focus on the proof of the main theorem about the subelliptic sharp G\r{a}rding inequality.
    \item Finally, Section \ref{FinalRemarks} is devoted to some remarks about our result in the purely subelliptic setting.
\end{itemize}

\section{Sub-Laplacians and pseudo-differential operators on compact Lie groups}\label{Sect2} 

\subsection{Pseudo-differential operators via localisations}\label{S2.1}
 Pseudo-differential operators on compact manifolds, and consequently on compact Lie groups, can be defined by using local coordinate charts (see H\"ormander \cite{Hormander1985III} and also M. Taylor \cite{Taylorbook1981} for a good introductory background on the subject).
 
Let us briefly  introduce these  classes starting with the definition in the Euclidean setting. Let $U$ be an open  subset of $\mathbb{R}^n.$ We  say that  the ``symbol" $a\in C^{\infty}(U\times \mathbb{R}^n, \mathbb{C})$ belongs to the H\"ormander class of order $m$ and of $(\rho,\delta)$-type, $S^m_{\rho,\delta}(U\times \mathbb{R}^n),$ $0\leqslant \rho,\delta\leqslant 1,$ if for every compact subset $K\subset U$ and for all $\alpha,\beta\in \mathbb{N}_0^n$, the symbol inequalities
\begin{equation*}
  |\partial_{x}^\beta\partial_{\xi}^\alpha a(x,\xi)|\leqslant C_{\alpha,\beta,K}(1+|\xi|)^{m-\rho|\alpha|+\delta|\beta|},
\end{equation*} hold true uniformly in $x\in K$ for all  $\xi\in \mathbb{R}^n.$ Then, a continuous linear operator $A:C^\infty_0(U) \rightarrow C^\infty(U)$ 
is a pseudo-differential operator of order $m$ of  $(\rho,\delta)$-type, if there exists
a symbol $a\in S^m_{\rho,\delta}(U\times \mathbb{R}^n)$ such that
\begin{equation*}
    Af(x)=\int\limits_{\mathbb{R}^n}e^{2\pi i x\cdot \xi}a(x,\xi)(\mathscr{F}_{\mathbb{R}^n}{f})(\xi)d\xi,
\end{equation*} for all $f\in C^\infty_0(U),$ where
$$
    (\mathscr{F}_{\mathbb{R}^n}{f})(\xi):=\int\limits_Ue^{-i2\pi x\cdot \xi}f(x)dx
$$ is the  Euclidean Fourier transform of $f$ at $\xi\in \mathbb{R}^n.$ 

Once the definition of H\"ormander classes on open subsets of $\mathbb{R}^n$ is established, it can be extended to smooth manifolds as follows.  Given a $C^\infty$-manifold $M,$ a linear continuous operator $A:C^\infty_0(M)\rightarrow C^\infty(M) $ is a pseudo-differential operator of order $m$ of $(\rho,\delta)$-type, with $ \rho\geqslant   1-\delta, $ and $0\leq \delta<\rho\leq 1,$  if for every local  coordinate patch $\omega: M_{\omega}\subset M\rightarrow U_{\omega}\subset \mathbb{R}^n,$
and for every $\phi,\psi\in C^\infty_0(U_\omega),$ the operator
\begin{equation*}
    Tu:=\psi(\omega^{-1})^*A\omega^{*}(\phi u),\,\,u\in C^\infty(U_\omega),\footnote{As usually, $\omega^{*}$ and $(\omega^{-1})^*$ are the pullbacks, induced by the maps $\omega$ and $\omega^{-1}$ respectively.}
\end{equation*} is a standard pseudo-differential operator with symbol $a_T\in S^m_{\rho,\delta}(U_\omega\times \mathbb{R}^n).$ In this case we write $A\in \Psi^m_{\rho,\delta}(M,\textnormal{loc}).$

\subsection{The positive sub-Laplacian and pseudo-differential operators via global symbols} Let $G$ be a compact Lie group with Lie algebra $\mathfrak{g}\simeq T_{e_G}G$, where $e_{G}$ is the neutral element of $G$, and let  
$$X=\{X_1,\cdots,X_{k} \}\subset \mathfrak{g}$$ 
be a system of $C^\infty$-vector fields. For all $I=(i_1,\cdots,i_\omega)\in \{1,2,\cdots,k\}^{\omega}$ of length $\omega\geqslant   1$, we denote by $$X_{I}:=[X_{i_1},[X_{i_2},\cdots [X_{i_{\omega-1}},X_{i_\omega}]\cdots]]$$
a commutator of length $\omega$, where $X_{I}:=X_{i}$ when $\omega=1$ and $I=(i)$. The system $X$ is said to satisfy H\"ormander's condition of step (or order) $\kappa$ if $\mathfrak{g}=\mathrm{span}\{X_I: |I|\leq \kappa\}$, that is, in other words, the vector fields $X_j$, $j=1,\ldots, k$, together with their commutator up to length $\kappa$, generate the whole Lie algebra $\mathfrak{g}$. 

Note that we are assuming that there is no subsystem $Y=\{Y_1,\cdots,Y_{\ell} \}\subset X$, $\ell< k$, of smooth vector fields such that $\mathfrak{g}=\mathrm{span}\{Y_I: |I|\leq \kappa\}$.
In this case we call $X$ a system of H\"ormander's vector fields.

Given a system $X=\{X_1,\cdots,X_{k}\}$ of H\"ormander's vector fields, then the operator defined as
\begin{equation*}
    \mathcal{L}\equiv \mathcal{L}_{X}:=-(X_{1}^2+\cdots +X_{k}^2),
\end{equation*} 
is a hypoelliptic operator by H\"ormander theorem on sums of the  squares of vector fields (see H\"ormander \cite{Hormander1967}). In particular the operator $\mathcal{L}$ is also subelliptic, and it is called the subelliptic Laplacian associated with the system $X$, or simply sub-Laplacian. It is clear from the definition that one can define different sub-Laplacians by using  different systems of  H\"ormander's vector fields (and that satisfy H\"ormander condition of different step). 

We will not treat other aspects of the analysis of sub-Laplacians here, we refer the interested reader to Agrachev et al. \cite{Agrachev2008}, Bismut \cite{Bismut2008}, Domokos et al. \cite{Domokos}, and to  the fundamental book of Montgomery \cite{Montgomery}. For some applications of H\"ormander's vector fields we refer to the book of Bramanti \cite{Bramanti}.  

Let us now introduce the Hausdorff dimension associated with the sub-Laplacian $\mathcal{L}$. For all $x\in G,$ let $H_{x}^\omega G$ be the linear subspace of the tangent space $T_xG$ generated by the $X_i$'s and by all the Lie brackets $$ [X_{j_1},X_{j_2}],[X_{j_1},[X_{j_2},X_{j_3}]],\cdots, [X_{j_1},[X_{j_2}, [X_{j_3},\cdots, X_{j_\omega}] ] ],$$ with $\omega\leqslant \kappa.$ Then clearly H\"ormander condition can be stated as $H_{x}^\kappa G=T_xG$ for all $x\in G$, where the following inclusions hold
\begin{equation*}
H_{x}^1G\subset H_{x}^2G \subset H_{x}^3G\subset \cdots \subset H_{x}^{\kappa-1}G\subset H_{x}^\kappa G= T_xG,\,\,x\in G.
\end{equation*} Note that the dimension of every $H_x^\omega G$ is constant in $x\in G$, so we set $\dim H^\omega G:=\dim H_{x}^\omega G,$ for all $x\in G$, and have that the Hausdorff dimension can be defined as (see e.g. \cite[p. 6]{HK16}),
\begin{equation}\label{Hausdorff-dimension}
    Q:=\dim(H^1G)+\sum_{i=1}^{\kappa-1} (i+1)(\dim H^{i+1}G-\dim H^{i}G ).
\end{equation}

As already mentioned in the introduction, we will make use of the quantization process developed by the third author and V. Turunen in \cite{Ruz}. We briefly recall below how this global quantization is defined. 

Let  $A$ be a continuous linear operator from $C^\infty(G)$ into $\mathscr{D}'(G),$ and let  $\widehat{G}$ be  the algebraic unitary dual of $G.$ Then, there exists a function \begin{equation}\label{symbol}a:G\times \widehat{G}\rightarrow \cup_{\ell\in \mathbb{N}} \mathbb{C}^{\ell\times \ell},\end{equation}  that we call the symbol of $A,$ such that $a(x,\xi):=a(x,[\xi])\in \mathbb{C}^{d_\xi\times d_\xi}$ for every equivalence class $[\xi]\in \widehat{G},$ where $\xi:G\rightarrow \textnormal{Hom}(H_{\xi}),$ $H_{\xi}\cong \mathbb{C}^{d_\xi},$ and such that
\begin{equation}\label{RuzhanskyTurunenQuanti}
    Af(x)=\sum_{[\xi]\in \widehat{G}}d_\xi{\textnormal{Tr}}[\xi(x)a(x,\xi)\widehat{f}(\xi)],\,\,\forall f\in C^\infty(G).
\end{equation}Note that we have denoted by
\begin{equation*}
    \widehat{f}(\xi)\equiv (\mathscr{F}f)(\xi):=\int\limits_{G}f(x)\xi(x)^*dx\in  \mathbb{C}^{d_\xi\times d_\xi},\,\,\,[\xi]\in \widehat{G},
\end{equation*} the Fourier transform of $f$ at $\xi\cong(\xi_{ij})_{i,j=1}^{d_\xi},$ where the matrix representation of $\xi$ is induced by an orthonormal basis of the representation space $H_{\xi}.$
The function $a$ in \eqref{symbol} satisfying \eqref{RuzhanskyTurunenQuanti} is unique, and satisfies the identity
\begin{equation*}
    a(x,\xi)=\xi(x)^{*}(A\xi)(x),\,\, A\xi:=(A\xi_{ij})_{i,j=1}^{d_\xi},\,\,\,[\xi]\in \widehat{G}.
\end{equation*}
Note that the previous identity is well defined. Indeed, it is well known that the functions $\xi_{ij}$, which are of $C^\infty$-class, are the eigenfunctions of the positive Laplace operator $\mathcal{L}_G$, that is $\mathcal{L}_G\xi_{ij}=\lambda_{[\xi]}\xi_{ij}$ for some non-negative real number $\lambda_{[\xi]}\geq 0$
depending  only of the equivalence class $[\xi]$ and not on the representation $\xi$.

In general, we refer to the function $a$ as the (global or full) {\it{symbol}} of the operator $A,$ and we will use the notation $A=\textnormal{Op}(a)$ to indicate that $a:=\sigma_A$ is the symbol associated with the operator $A.$

In order to classify symbols in the H\"ormander classes, in  \cite{Ruz} the authors defined the notion of {\it{difference operators}}, which endows $\widehat{G}$ with a difference structure.  Following  \cite{RuzhanskyWirth2015}, a difference operator $Q_\xi: \mathscr{D}'(\widehat{G})\rightarrow \mathscr{D}'(\widehat{G})$ of order $k$  is defined as
\begin{equation}\label{taylordifferences}
    Q_\xi\widehat{f}(\xi)=\widehat{qf}(\xi),\,[\xi]\in \widehat{G},
\end{equation}  for some function $q$ vanishing of order $k$ at the neutral element $e=e_G.$ We will denote by $\textnormal{diff}^k(\widehat{G})$  the class of all difference operators of order $k.$ For a  fixed smooth function $q,$ the associated difference operator will be denoted by $\Delta_q\equiv Q_\xi.$ A system  of difference operators (see e.g. \cite{RuzhanskyWirth2015})
\begin{equation*}
  \Delta_{\xi}^\alpha:=\Delta_{q_{(1)}}^{\alpha_1}\cdots   \Delta_{q_{(i)}}^{\alpha_{i}},\,\,\alpha=(\alpha_j)_{1\leqslant j\leqslant i}, 
\end{equation*}
with $i\geq n$, is called   admissible  if
\begin{equation*}
    \textnormal{rank}\{\nabla q_{(j)}(e):1\leqslant j\leqslant i \}=\textnormal{dim}(G), \textnormal{   and   }\Delta_{q_{(j)}}\in \textnormal{diff}^{1}(\widehat{G}).
\end{equation*}
An admissible collection is said to be strongly admissible if, additionally, 
\begin{equation*}
    \bigcap_{j=1}^{i}\{x\in G: q_{(j)}(x)=0\}=\{e_G\}.
\end{equation*}

\begin{remark}\label{remarkD}
Matrix components of unitary representations induce difference operators. Indeed, if $\xi_{1},\xi_2,\cdots, \xi_{k},$ are  fixed irreducible and unitary  representation of $G$, which not necessarily belong to the same equivalence class, then each coefficient of the matrix
\begin{equation}
 \xi_{\ell}(g)-I_{d_{\xi_{\ell}}}=[\xi_{\ell}(g)_{ij}-\delta_{ij}]_{i,j=1}^{d_{\xi_\ell}},\, \quad g\in G, \,\,1\leq \ell\leq k,
\end{equation} 
that is each function 
$q^{\ell}_{ij}(g):=\xi_{\ell}(g)_{ij}-\delta_{ij}$, $ g\in G,$ defines a difference operator
\begin{equation}\label{Difference:op:rep}
    \mathbb{D}_{\xi_\ell,i,j}:=\mathscr{F}(\xi_{\ell}(g)_{ij}-\delta_{ij})\mathscr{F}^{-1}.
\end{equation}
We can fix $k\geq \mathrm{dim}(G)$ of these representations in such a way that the corresponding  family of difference operators is admissible, that is, 
\begin{equation*}
    \textnormal{rank}\{\nabla q^{\ell}_{i,j}(e):1\leqslant \ell\leqslant k \}=\textnormal{dim}(G).
\end{equation*}
To define higher order difference operators of this kind, let us fix a unitary irreducible representation $\xi_\ell$.
Since the representation is fixed we omit the index $\ell$ of the representations $\xi_\ell$ in the notation that will follow.
Then, for any given multi-index $\alpha\in \mathbb{N}_0^{d_{\xi_\ell}^2}$, with 
$|\alpha|=\sum_{i,j=1}^{d_{\xi_\ell}}\alpha_{i,j}$, we write
$$\mathbb{D}^{\alpha}:=\mathbb{D}_{1,1}^{\alpha_{11}}\cdots \mathbb{D}^{\alpha_{d_{\xi_\ell},d_{\xi_\ell}}}_{d_{\xi_\ell}d_{\xi_\ell}}
$$ 
for a difference operator of order $|\alpha|$.
\end{remark}
The difference operators endow the unitary dual $\widehat{G}$ with a difference structure. For difference operators of the previous form, the  following finite Leibniz-like formula holds true (see  \cite{RuzhanskyTurunenWirth2014} for details). Note that below we are still assuming that the representation $\xi_\ell$ is fixed.
\begin{proposition}[Leibniz rule for difference operators]\label{Leibnizrule} Let $G$ be a compact Lie group and let $\mathbb{D}^{\alpha},$ $\alpha\in \mathbb{N}^{d_{\xi_\ell}}_0,$ be the family of difference operators defined in  \eqref{Difference:op:rep}. Then, the following Leibniz rule
\begin{align*}
   ( \mathbb{D}^{\alpha}a_{1}a_{2})(x_0,\xi)(x_0,\xi) )=\sum_{ |\gamma|,|\varepsilon|\leqslant |\alpha|\leqslant |\gamma|+|\varepsilon| }C_{\varepsilon,\gamma}(\mathbb{D}^{\gamma}a_{1})(x_0,\xi) (\mathbb{D}^{\varepsilon}a_{2})(x_0,\xi), \quad x_{0}\in G,
\end{align*}holds  for all $a_{1},a_{2}\in C^{\infty}(G)\times \mathscr{S}'(\widehat{G})$, where the summation is taken over all $\varepsilon, \gamma$ such that $|\varepsilon|,|\delta|\leq |\alpha|\leq |\gamma|+|\varepsilon|$. 
\end{proposition}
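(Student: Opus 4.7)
The plan is to translate the statement to the group side via Fourier duality, using a coproduct-type identity satisfied by the matrix coefficients $q_{ij}(g) := \xi_\ell(g)_{ij} - \delta_{ij}$. Exploiting the homomorphism property $\xi_\ell(gh) = \xi_\ell(g)\xi_\ell(h)$ and writing each matrix entry in the form $\xi_\ell(\cdot)_{ab} = q_{ab}(\cdot) + \delta_{ab}$, a direct expansion yields
$$q_{ij}(gh) = q_{ij}(g) + q_{ij}(h) + \sum_{k=1}^{d_{\xi_\ell}} q_{ik}(g) q_{kj}(h), \qquad g,h \in G.$$
This is the essential algebraic input: multiplication by $q_{ij}$ is almost a derivation under the group convolution, with a correction term that is once again expressible through the $q_{ab}$'s. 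Since the $\mathbb{D}_{i,j}$ commute (they correspond to scalar multiplication on $G$), the order in $\mathbb{D}^\alpha$ is immaterial, and higher powers arise by iterating this identity.

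Using the convention $\widehat{f \ast g}(\xi) = \widehat{g}(\xi)\widehat{f}(\xi)$, the pointwise matrix product on the Fourier side corresponds to a convolution on $G$. Writing $a_\ell(x_0, \xi) = \widehat{f_{\ell, x_0}}(\xi)$ and inserting the coproduct identity under the convolution integral that represents $q_{ij}\cdot(f_{2,x_0} \ast f_{1,x_0})$, I split the resulting expression into three separate convolutions. Taking the Fourier transform and applying the definition $\mathbb{D}_{i,j}\widehat{f}(\xi) = \widehat{q_{ij} f}(\xi)$ yields the first-order Leibniz rule
$$\mathbb{D}_{i,j}(a_1 a_2) = (\mathbb{D}_{i,j} a_1)\, a_2 + a_1\, (\mathbb{D}_{i,j} a_2) + \sum_{k=1}^{d_{\xi_\ell}} (\mathbb{D}_{k,j} a_1)(\mathbb{D}_{i,k} a_2),$$
evaluated at every $(x_0, \xi) \in G \times \widehat{G}$. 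This already exhibits the constraints $|\gamma|, |\varepsilon| \leq 1 \leq |\gamma| + |\varepsilon|$ in the special case $|\alpha| = 1$.

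Finally, I would iterate by induction on $|\alpha|$. Each application of an elementary $\mathbb{D}_{i,j}$ to a term of the form $(\mathbb{D}^\gamma a_1)(\mathbb{D}^\varepsilon a_2)$ produces three contributions: one in which $|\gamma|$ increases by one, one in which $|\varepsilon|$ increases by one, and a cross term (with an internal sum over a representation index) in which both increase by one. Consequently, after $|\alpha|$ steps each of $|\gamma|, |\varepsilon|$ has grown by at most $|\alpha|$, while their sum has grown by at least $|\alpha|$, which is exactly the stated constraint $|\gamma|, |\varepsilon| \leq |\alpha| \leq |\gamma| + |\varepsilon|$. The main obstacle is the combinatorial bookkeeping: absorbing the internal summations over representation indices produced at every step into a finite collection of multi-indices $\gamma, \varepsilon$ with universal constants $C_{\varepsilon,\gamma}$, and verifying that this collection stays finite. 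The analytic content, however, is concentrated entirely in the coproduct identity and its transfer through the convolution/Fourier duality.
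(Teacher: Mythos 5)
The paper does not actually prove this proposition: it is stated with a pointer to \cite{RuzhanskyTurunenWirth2014} (and to Corollary 5.13 of \cite{Fischer2015} for the general admissible case). Your argument is correct and is, in essence, the argument given in those references: the coproduct identity
\[
q_{ij}(gh)=q_{ij}(g)+q_{ij}(h)+\sum_{k}q_{ik}(g)q_{kj}(h),
\]
derived from $\xi_\ell(gh)=\xi_\ell(g)\xi_\ell(h)$, combined with the duality $\widehat{f\ast g}(\xi)=\widehat{g}(\xi)\widehat{f}(\xi)$, gives the first-order Leibniz rule with the correct index bookkeeping, and the constraints $|\gamma|,|\varepsilon|\leq|\alpha|\leq|\gamma|+|\varepsilon|$ then propagate by induction exactly as you describe. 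Two small points worth making explicit if you write this up: (i) the elementary operators $\mathbb{D}_{i,j}$ commute (they correspond to multiplication by scalar-valued functions on $G$), so after each inductive step the resulting strings of elementary difference operators may indeed be rewritten as $\mathbb{D}^\gamma$, $\mathbb{D}^\varepsilon$ for genuine multi-indices $\gamma,\varepsilon\in\mathbb{N}_0^{d_{\xi_\ell}^2}$; (ii) finiteness of the family of $(\gamma,\varepsilon)$ appearing and of the constants $C_{\varepsilon,\gamma}$ is automatic from the bound $|\gamma|,|\varepsilon|\leq|\alpha|$, since there are only finitely many multi-indices below a fixed length. With those remarks, the proof is complete.
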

Note that for different kind of difference operators, namely for those given by compositions of difference operators of higher order associated with different representations, a Leibniz-like formula still holds true by iteration. For more details about difference operators and Leibniz-like formulas for admissible collections see also Corollary 5.13 in \cite{Fischer2015}.
\medskip

We are now going to introduce the global H\"ormander classes of symbols defined in \cite{Ruz}.
First let us recall that every left-invariant vector field  $Y\in\mathfrak{g}$ can be identified with the first order  differential operator $\partial_{Y}:C^\infty(G)\rightarrow \mathscr{D}'(G)$  given by
 \begin{equation*}
   \partial_{Y}f(x)=  (Y_{x}f)(x)=\frac{d}{dt}f(x\exp(tY) )|_{t=0}.
 \end{equation*}If $\{X_1,\cdots, X_n\}$ is a basis of the Lie algebra $\mathfrak{g},$ then we will use the standard multi-index notation
 $$ \partial_{X}^{\alpha}=X_{x}^{\alpha}=\partial_{X_1}^{\alpha_1}\cdots \partial_{X_n}^{\alpha_n},     $$
 for a canonical left-invariant differential operator of order $|\alpha|.$

By using this property, together with the following notation for the so-called  elliptic weight $$\langle\xi \rangle:=(1+\lambda_{[\xi]})^{1/2},\,\,[\xi]\in \widehat{G},$$ we can finally give the definition of global symbol classes.
\begin{definition}Let $G$ be a compact Lie group and let $0\leqslant \delta,\rho\leqslant 1.$ Let $$\sigma:G\times \widehat{G}\rightarrow \bigcup_{[\xi]\in \widehat{G}}\mathbb{C}^{d_\xi\times d_\xi},$$ be a matrix-valued function such that for any $[\xi]\in \widehat{G},$ $\sigma(\cdot,[\xi])$ is of $C^\infty$-class, and such that, for any given $x\in G$ there is a distribution $k_{x}\in \mathscr{D}'(G),$ smooth in $x,$ satisfying $\sigma(x,\xi)=\widehat{k}_{x}(\xi),$ $[\xi]\in \widehat{G}$. We say that $\sigma \in \mathscr{S}^{m}_{\rho,\delta}(G)$ if the following symbol inequalities 
\begin{equation}\label{HormanderSymbolMatrix}
   \Vert \partial_{X}^\beta \Delta_\xi^{\gamma} \sigma_A(x,\xi)\Vert_{\textnormal{op}}\leqslant C_{\alpha,\beta}
    \langle \xi \rangle^{m-\rho|\gamma|+\delta|\beta|},
\end{equation} are satisfied for all $\beta$ and  $\gamma $ multi-indices and for all $(x,[\xi])\in G\times \widehat{G}.$ For $\sigma_A\in \mathscr{S}^m_{\rho,\delta}(G)$ we will write $A\in\Psi^m_{\rho,\delta}(G)\equiv\textnormal{Op}(\mathscr{S}^m_{\rho,\delta}(G)).$
\end{definition}
The global H\"ormander classes on compact Lie groups can be used to describe the H\"ormander classes defined by local coordinate systems. We present the corresponding statement as follows. 
\begin{theorem}[Equivalence of classes, \cite{Fischer2015,Ruz,RuzhanskyTurunenWirth2014}] Let $A:C^{\infty}(G)\rightarrow\mathscr{D}'(G)$ be a continuous linear operator and let $0\leq \delta<\rho\leq 1,$ with $\rho\geq 1-\delta.$ Then, $A\in \Psi^m_{\rho,\delta}(G,\textnormal{loc}),$ if and only if $\sigma_A\in \mathscr{S}^m_{\rho,\delta}(G).$ Consequently,
\begin{equation}\label{EQequivalence}
   \textnormal{Op}(\mathscr{S}^m_{\rho,\delta}(G))= \Psi^m_{\rho,\delta}(G,\textnormal{loc}),\,\,\,0\leqslant \delta<\rho\leqslant 1,\,\rho\geqslant   1-\delta.
\end{equation}
\end{theorem}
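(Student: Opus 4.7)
The plan is to prove the two inclusions in \eqref{EQequivalence} separately, exploiting the fact that matrix coefficients $\xi_{ij}$ of irreducible unitary representations are $C^\infty$ joint eigenfunctions of every bi-invariant differential operator (in particular of the Laplacian $\mathcal{L}_G$, with eigenvalue $\lambda_{[\xi]} = \langle\xi\rangle^2 - 1$), and that difference operators $\Delta_\xi^\gamma$ correspond, via the Fourier transform on $G$, to multiplication by a smooth function on $G$ vanishing at the identity to order $|\gamma|$. This duality between differentiation on $\widehat{G}$ (difference operators) and multiplication by vanishing functions on $G$ is the bridge between the global symbolic estimates \eqref{HormanderSymbolMatrix} and the local ones on $U_\omega \times \mathbb{R}^n$. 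Throughout, the hypothesis $\rho \geq 1-\delta$ will be used to guarantee that the local H\"ormander classes are invariantly defined under coordinate changes.

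For the implication $\mathrm{Op}(\mathscr{S}^m_{\rho,\delta}(G)) \subset \Psi^m_{\rho,\delta}(G,\textnormal{loc})$, I would start with $A = \mathrm{Op}(\sigma_A)$ for $\sigma_A \in \mathscr{S}^m_{\rho,\delta}(G)$ and, given a chart $\omega : M_\omega \to U_\omega$ and cutoffs $\phi,\psi \in C_c^\infty(U_\omega)$, write the Schwartz kernel of $T := \psi(\omega^{-1})^*A\omega^*\phi$ using the Peter--Weyl expansion \eqref{RuzhanskyTurunenQuanti}. Inserting a Fourier inversion in Euclidean coordinates then produces a local symbol $\tilde a(x,\xi)$; the task is to estimate $\partial_x^\beta \partial_\xi^\alpha \tilde a$. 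The spatial derivatives $\partial_x^\beta$ are controlled directly by $X^\beta\sigma_A$, while the dual-variable derivatives $\partial_\xi^\alpha$ are converted into combinations of difference operators $\Delta_\xi^\gamma$ applied to $\sigma_A$ by means of an admissible system (this is precisely the content of the admissibility: the gradients of the defining functions $q_{(j)}$ at $e$ span $T_eG$, so one can reproduce any Euclidean $\partial_\xi$-direction from difference operators and localization). Once this dictionary is in place, the subelliptic(elliptic) weight comparison $\langle\xi\rangle \asymp (1+|\xi|)$ on the frequency side (via the principal symbol of $\mathcal{L}_G$) converts the bound \eqref{HormanderSymbolMatrix} into the classical H\"ormander estimate in $(x,\xi)$.

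For the converse implication, I would take $A \in \Psi^m_{\rho,\delta}(G,\textnormal{loc})$ and work with the symbol formula $\sigma_A(x,\xi) = \xi(x)^*(A\xi)(x)$. Localizing, each matrix coefficient $\xi_{ij}$ restricted to a chart is a highly oscillatory function whose ``frequency'' scale is governed by $\sqrt{\lambda_{[\xi]}} \sim \langle\xi\rangle$; thus applying the local H\"ormander symbol to $\xi_{ij}$ and using the stationary/oscillatory integral asymptotics extracts a factor of $\langle\xi\rangle^m$, so $\|\sigma_A(x,\xi)\|_{\mathrm{op}} \lesssim \langle\xi\rangle^m$. The derivative estimates follow by using that spatial $X^\beta$ applied to $\sigma_A$ merely redistributes derivatives onto $A$ (giving a loss $\langle\xi\rangle^{\delta|\beta|}$ in the local theory after transferring derivatives via $\xi(x)^*$), while each difference operator $\Delta_\xi^\gamma$ amounts, by definition \eqref{taylordifferences}, to multiplying the kernel by a function vanishing of order $|\gamma|$ at $e$; under the Euclidean local picture this produces a factor $|x-y|^{|\gamma|}$ in the kernel, which, integrated against the H\"ormander oscillatory integral representation of $A$, yields the gain $\langle\xi\rangle^{-\rho|\gamma|}$ through the standard non-stationary phase / integration-by-parts procedure. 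Combining both effects via the Leibniz rule (Proposition \ref{Leibnizrule}) gives \eqref{HormanderSymbolMatrix}.

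The principal obstacle is precisely this dictionary between $\partial_\xi^\alpha$ in a chart and the difference operators $\Delta_\xi^\gamma$: neither side is intrinsic, and the conversion must be carried out simultaneously for all orders with correct tracking of the weights $\langle\xi\rangle^{\rho}$ versus $\langle\xi\rangle^{\delta}$. One has to rely on an admissible (or strongly admissible) collection of difference operators to cover every $\xi$-direction, together with a careful multi-index bookkeeping via Leibniz. A secondary but essential technical point is that the restriction $\rho \geq 1-\delta$ is needed to ensure the local class is stable under changes of coordinates, so that the definition of $\Psi^m_{\rho,\delta}(G,\textnormal{loc})$ is well-posed and chart-independent; when this condition fails, only one side of the equivalence (the global one) makes sense, which is one of the motivations for working with the classes $\mathscr{S}^m_{\rho,\delta}(G)$ throughout the rest of the paper.
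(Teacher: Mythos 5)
The paper itself gives no proof of this statement: it is imported from the literature and cited to \cite{Fischer2015,Ruz,RuzhanskyTurunenWirth2014}, so there is no internal argument to compare against. Your outline does capture the right dictionary that underlies the actual proofs there: difference operators $\Delta_\xi^\gamma$ correspond, under the group Fourier transform, to multiplication of the right-convolution kernel by a function vanishing to order $|\gamma|$ at $e$; admissibility is what guarantees these vanishing functions span (to first order) all directions of $T_eG$, hence reproduce all Euclidean $\xi$-directions in a chart; the weight comparison $\langle\xi\rangle\asymp(1+|\xi|)$ translates the global scale to the local one; and $\rho\geq1-\delta$ is exactly what makes the local class $\Psi^m_{\rho,\delta}(\mathrm{loc})$ chart-independent and hence makes the equivalence meaningful at all. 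That said, two points where your sketch is substantially lighter than what the references actually do deserve to be flagged.

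First, your direction $\Psi^m_{\rho,\delta}(G,\mathrm{loc})\Rightarrow\sigma_A\in\mathscr{S}^m_{\rho,\delta}(G)$ leans on the claim that ``applying the local symbol to $\xi_{ij}$ and using oscillatory integral asymptotics extracts a factor $\langle\xi\rangle^m$, so $\|\sigma_A(x,\xi)\|_{\mathrm{op}}\lesssim\langle\xi\rangle^m$.'' The quantity to bound is the $\ell^2\to\ell^2$ operator norm of a $d_\xi\times d_\xi$ matrix, with $d_\xi$ unbounded; this does not follow from entry-wise estimates on $(A\xi_{ij})(x)$, nor is it a direct ``factor extraction.'' In the actual proofs this operator-norm bound is obtained by passing through a kernel characterization of both classes: one shows the local H\"ormander kernel estimates of $A$ are equivalent to Calder\'on--Zygmund-type decay and smoothness estimates for the right-convolution kernel $k_x$ (including its singularity at $e$ and its rapid decay away from $e$), and then reads off the operator-norm bounds for $\Delta_\xi^\gamma X^\beta \sigma_A$ from the kernel picture. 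A purely ``symbol-to-symbol'' oscillatory-integral argument without this kernel intermediary has a genuine gap here.

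Second, the other direction requires more than declaring that $\partial_x^\beta$ of the local symbol is ``controlled directly by $X^\beta\sigma_A$'' and $\partial_\xi^\alpha$ is ``converted into combinations of $\Delta_\xi^\gamma$.'' Neither conversion is exact: in a chart, $\partial_{x_j}$ is a smooth (chart-dependent) combination of left-invariant fields, and converting $\partial_\xi^\alpha$ into difference operators produces lower-order correction terms whose weights must be tracked against the condition $\rho\geq1-\delta$ (this is where that condition actively enters the estimates, not just the coordinate-invariance of the target class). The references handle this by making the kernel the common ground for both symbolic calculi, whereas your plan asserts the two estimations hold without routing through the kernel, which understates the work. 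Your sketch is therefore a plausible plan at the level of ideas but not yet a proof; the decisive technical lemma you would need is precisely the kernel characterization of $\mathscr{S}^m_{\rho,\delta}(G)$.
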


\subsection{Odd and even functions on compact Lie groups} 
Since we will use some properties of odd and even functions on $G$ in this paper, for completeness we recall these definitions below.
\begin{definition}
   On a group $G,$ a function $f:G\rightarrow \mathbb{C}$ is
   \begin{itemize}
       \item  even, if $f(x^{-1})=f(x),$ for every $x\in G$;
       \item  central, if $f(xy)=f(yx),$ for every $x,y\in G$;
       \item odd, if $f(x^{-1})=-f(x),$ for every $x\in G$.
   \end{itemize}
\end{definition}
Next, we summarize the action of vector fields on even and odd functions (see Proposition 3.11 of \cite{RuzhanskyTurunen2011}). 
\begin{proposition}
Let $G$ be a Lie group and $f\in C^\infty(G).$ Let $X_{i},$ $1\leq i\leq s$, be an arbitrary system of vector fields in $\mathfrak{g}.$ Then
\begin{equation}
    \partial_{X_1}\cdots \partial_{X_s}f(x^{-1})=\pm \partial_{X_s}\cdots \partial_{X_1}f(x),
\end{equation}
and
\begin{equation*}
    \partial_{X_1}\cdots \partial_{X_s}f(x^{-1})=(-1)^{s+1} \partial_{X_s}\cdots \partial_{X_1}f(x)
\end{equation*}
if $f$ is an odd central function.

\end{proposition}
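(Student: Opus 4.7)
The plan is to reduce the identity to the multi-parameter chain rule applied to $f$ at a product of one-parameter subgroups, and then to use in turn the parity and centrality hypotheses to transform the argument. First I would invoke the iterated identity
\[
\partial_{Y_1}\cdots\partial_{Y_s} h(y) = \left.\frac{\partial^s}{\partial t_1\cdots \partial t_s}\right|_{t=0} h(y\exp(t_1 Y_1)\cdots\exp(t_s Y_s)),
\]
which follows by induction from $\partial_Y h(y) = (d/dt)_{t=0}\, h(y\exp(tY))$. Applied with $h=f$, $y=x^{-1}$, $Y_i=X_i$, this rewrites the left-hand side as a mixed $s$th partial. Applied with the reversed list $Y_i=X_{s-i+1}$ at $y=x$ (after a harmless relabelling of the dummy variables, permissible since mixed partials are symmetric), it rewrites the right-hand side target as
\[
\partial_{X_s}\cdots\partial_{X_1} f(x) = \left.\frac{\partial^s}{\partial t_1\cdots\partial t_s}\right|_{t=0} f\bigl(x\exp(t_s X_s)\exp(t_{s-1}X_{s-1})\cdots\exp(t_1 X_1)\bigr).
\]

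Next I would transform the argument $y := x^{-1}\exp(t_1 X_1)\cdots\exp(t_s X_s)$ that appears on the LHS. Its inverse is $y^{-1}=\exp(-t_s X_s)\cdots\exp(-t_1 X_1)\, x$, so the oddness assumption $f(y)=-f(y^{-1})$ produces an overall factor $-1$ and flips the argument to $\exp(-t_s X_s)\cdots\exp(-t_1 X_1) x$. The centrality assumption $f(ab)=f(ba)$ then allows me to cycle $x$ from the right end of this product to the left end, obtaining $-f\bigl(x\exp(-t_s X_s)\cdots\exp(-t_1 X_1)\bigr)$. This is the crucial use of centrality: without it, one would end up computing a right-invariant derivative of $f$ rather than the desired left-invariant one, because the parameters $t_i$ sit to the left of $x$ after the inversion.

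With the argument reshaped in this way, the differentiation is immediate. The substitution $\sigma_i=-t_i$ contributes $(-1)^s$ (one sign per differentiation variable), which combines with the outer $-1$ coming from oddness to produce the total sign $(-1)^{s+1}$. Comparing with the reordered identity from the first step gives the claimed equality; the same computation without the outer sign covers the even central case and yields sign $(-1)^s$, which is the meaning of the $\pm$ in the first equation. The only real difficulty is bookkeeping: one must carefully track the reversal of the ordering $X_1,\ldots,X_s \mapsto X_s,\ldots,X_1$ induced by group inversion, and combine the two sources of signs coming from the parity of $f$ and from the $(-1)$ inherited by each parameter under $t_i\mapsto-t_i$. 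Every other ingredient (smoothness, symmetry of mixed partials, the translation formula for $\partial_Y$) is elementary.
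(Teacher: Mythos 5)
Your argument is correct and is essentially the proof of Proposition 3.11 of \cite{RuzhanskyTurunen2011}, which the paper itself quotes without reproving: one writes the iterated left-invariant derivatives as mixed partials of $f$ along products of one-parameter subgroups, applies inversion (which reverses the order of the exponential factors and negates the parameters) and centrality (to cycle $x$ back to the left so that one still computes a left-invariant derivative), and collects the factor $(-1)^{s}$ from the substitutions $t_i\mapsto -t_i$ together with the extra $-1$ from oddness. The sign bookkeeping, including reading the $\pm$ in the first identity as covering the even versus odd central cases, is accurate.
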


\subsection{Subelliptic H\"ormander classes on compact Lie groups} 
 In order to define the subelliptic H\"ormander calculus, we will use a suitable basis of the Lie algebra arising from Taylor expansions.  We explain the choice of this basis by means of the following lemma (see Lemma 7.4 in \cite{Fischer2015}).

 \begin{lemma}\label{Taylorseries} Let $G$ be a compact Lie group of dimension $n.$ Let $\mathfrak{D}=\{\Delta_{q_{(j)}}\}_{1\leqslant j\leqslant n}$ be a strongly admissible admissible collection of difference operators, that is
\begin{equation*}
    \textnormal{rank}\{\nabla q_{(j)}(e):1\leqslant j\leqslant n \}=n, \,\,\,\bigcap_{j=1}^{n}\{x\in G: q_{(j)}(x)=0\}=\{e_G\}.
\end{equation*}
Then there exists a basis $X_{\mathfrak{D}}=\{X_{1,\mathfrak{D}},\cdots ,X_{n,\mathfrak{D}}\}$ of $\mathfrak{g}$ such that $$X_{j,\mathfrak{D}}q_{(k)}(\cdot^{-1})(e_G)=\delta_{jk}.
$$
Moreover, by using the multi-index notation $$\partial_{X}^{(\beta)}=\partial_{X_{1,\mathfrak{D}}}^{\beta_1}\cdots \partial_{X_{n,\mathfrak{D}}}^{\beta_n}, $$ for any $\beta\in\mathbb{N}_0^n,$
where $$\partial_{X_{i,\mathfrak{D}}}f(x)=  \frac{d}{dt}f(x\exp(tX_{i,\mathfrak{D}}) )|_{t=0},\,\,f\in C^{\infty}(G),$$ and denoting by
\begin{equation*}
    R_{x,N}^{f}(y)=f(xy)-\sum_{|\alpha|<N}q_{(1)}^{\alpha_1}(y^{-1})\cdots q_{(n)}^{\alpha_n}(y^{-1})\partial_{X}^{(\alpha)}f(x)
\end{equation*} 
the Taylor remainder, we have that 
\begin{equation*}
    | R_{x,N}^{f}(y)|\leqslant C|y|^{N}\max_{|\alpha|\leqslant N}\Vert \partial_{X}^{(\alpha)}f\Vert_{L^\infty(G)},
\end{equation*}
where the constant $C>0$ is dependent on $N,$ $G$ and $\mathfrak{D}$ (but not on $f\in C^\infty(G)).$ In addition we have that $\partial_{X}^{(\beta)}|_{x_1=x}R_{x_1,N}^{f}=R_{x,N}^{\partial_{X}^{(\beta)}f}$, and 
\begin{equation*}
    | \partial_{X}^{(\beta)}|_{y_1=y}R_{x,N}^{f}(y_1)|\leqslant C|y|^{N-|\beta|}\max_{|\alpha|\leqslant N-|\beta|}\Vert \partial_{X}^{(\alpha+\beta)}f\Vert_{L^\infty(G)},
\end{equation*}provided that $|\beta|\leqslant N.$
 \end{lemma}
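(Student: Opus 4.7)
The plan is to produce $X_{\mathfrak{D}}$ by dualizing the differentials of the functions $\tilde q_{j}(y) := q_{(j)}(y^{-1})$ at $e_G$, use these functions as smooth local coordinates near $e_G$, and then apply a noncommutative Taylor expansion on the group, whose coefficients are then matched with the iterated left-invariant derivatives $\partial_X^{(\alpha)}$ via the duality relation.

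First I would observe that the rank condition $\mathrm{rank}\{\nabla q_{(j)}(e_G)\}_{j=1}^n = n$, combined with the fact that inversion on $G$ has differential $-\mathrm{id}$ at $e_G$, implies that $\{d\tilde q_{j}\big|_{e_G}\}_{j=1}^n$ is a basis of $T_{e_G}^{*}G$. Define $\{X_{j,\mathfrak{D}}\}_{j=1}^n$ to be the basis of $T_{e_G}G \cong \mathfrak{g}$ dual to it; by construction $X_{j,\mathfrak{D}} q_{(k)}(\cdot^{-1})(e_G) = \delta_{jk}$, establishing the first claim. The strong admissibility hypothesis, together with the inverse function theorem, shows that $\tilde q = (\tilde q_{1},\ldots,\tilde q_{n})$ is a diffeomorphism from some neighborhood $U$ of $e_G$ onto a neighborhood of $0 \in \mathbb{R}^n$, so the $\tilde q_j$ are genuine smooth local coordinates at the identity.

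Next I would fix $x \in G$ and apply a noncommutative Taylor expansion to $g(y) := f(xy)$ at $y = e_G$. This is done by iteratively using the identity
$$g(z\exp(tX_{j,\mathfrak{D}})) = g(z) + \int_0^t \partial_{X_{j,\mathfrak{D}}}g(z\exp(sX_{j,\mathfrak{D}}))\,ds,$$
and rewriting the resulting multilinear expression in the coordinates $\tilde q$ using the dual-basis relation. Left-invariance of each $\partial_{X_{j,\mathfrak{D}}}$ then converts $\partial_X^{(\alpha)} g(e_G)$ into $\partial_X^{(\alpha)} f(x)$, yielding the stated formula with $R_{x,N}^{f}$ equal to a standard integral remainder. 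The pointwise bound $|R_{x,N}^{f}(y)| \leq C|y|^N \max_{|\alpha|\leq N}\|\partial_X^{(\alpha)}f\|_{L^\infty}$ follows from the integral form of the remainder together with $|\tilde q(y)| \lesssim |y|$ near $e_G$; for $y \notin U$ one uses compactness of $G$ (so $|y| \geq c > 0$) to trivialize the estimate in terms of a uniform bound on $f$. The identity $\partial_X^{(\beta)}\big|_{x_1 = x} R_{x_1,N}^{f} = R_{x,N}^{\partial_X^{(\beta)} f}$ is immediate from left-invariance of $\partial_X^{(\beta)}$ and the defining formula, and the bound $|\partial_X^{(\beta)}|_{y_1 = y} R_{x,N}^{f}(y_1)| \leq C|y|^{N-|\beta|}\max_{|\alpha|\leq N - |\beta|}\|\partial_X^{(\alpha+\beta)}f\|_{L^\infty}$ is obtained by applying the left-invariant operator $\partial_X^{(\beta)}$ in the $y$-variable, noting that it consumes $|\beta|$ orders of vanishing in the Taylor polynomial and in the remainder.

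The main obstacle I anticipate is the clean matching between the monomials $q_{(1)}^{\alpha_1}(y^{-1})\cdots q_{(n)}^{\alpha_n}(y^{-1})$ and the iterated derivatives $\partial_X^{(\alpha)} f(x)$ without spurious factorial or reordering constants; in Euclidean Taylor expansions one gets $\frac{1}{\alpha!}\partial^{\alpha}$ with a symmetric partial, whereas here the monomials are non-symmetric products and the derivatives are iterated in a fixed order. Resolving this requires an induction on $|\alpha|$: assuming the formula through order $N-1$, one expands the residual using the fundamental-theorem-of-calculus identity above in the direction $X_{j,\mathfrak{D}}$ and isolates the new coefficient by evaluating the duality relation $\partial_{X_{k,\mathfrak{D}}} \tilde q_{j}(e_G) = \delta_{jk}$ repeatedly, which kills all but the desired monomial. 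Once this combinatorial bookkeeping is in place the analytic estimates above are routine.
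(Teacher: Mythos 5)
First, a point of reference: the paper does not prove this lemma at all --- it is imported verbatim from Lemma 7.4 of \cite{Fischer2015}, so there is no in-paper argument to compare against. Your outline does follow the standard route used there: dualize the differentials of $\tilde q_j(y):=q_{(j)}(y^{-1})$ at $e_G$ (using that inversion has differential $-\mathrm{id}$) to produce the basis $X_{\mathfrak{D}}$, use strong admissibility and the inverse function theorem to turn $\tilde q=(\tilde q_1,\dots,\tilde q_n)$ into a chart at $e_G$, Taylor-expand $u\mapsto f(x\,\tilde q^{-1}(u))$ at $u=0$, and dispose of $y$ outside the chart by compactness. The construction of the dual basis, the crude remainder bound, and the treatment of large $y$ are fine as you describe them.

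The genuine gap sits exactly at the step you flag and then wave through as ``combinatorial bookkeeping''. The coefficient of $u^{\alpha}=q_{(1)}^{\alpha_1}(y^{-1})\cdots q_{(n)}^{\alpha_n}(y^{-1})$ in the Euclidean Taylor expansion is the left-invariant operator $D^{(\alpha)}f(x)=\frac{1}{\alpha!}\partial_u^{\alpha}\bigl[f(x\,\tilde q^{-1}(u))\bigr]\big|_{u=0}$, and the lemma asserts that this equals the ordered product $\partial_{X_{1,\mathfrak{D}}}^{\alpha_1}\cdots\partial_{X_{n,\mathfrak{D}}}^{\alpha_n}$. Your induction proposes to isolate this coefficient by ``evaluating the duality relation $\partial_{X_{k,\mathfrak{D}}}\tilde q_j(e_G)=\delta_{jk}$ repeatedly'', but that relation only controls \emph{first} derivatives of the $\tilde q_j$ at $e_G$; to peel off the coefficient of $\tilde q^{\alpha}$ at order $|\alpha|\ge 2$ you would need $\partial_X^{(\alpha)}\tilde q^{\beta}(e_G)=c_\alpha\,\delta_{\alpha\beta}$ for all $|\beta|\le|\alpha|$, which fails for a general strongly admissible collection (for instance $\partial_{X_{j,\mathfrak{D}}}^2\tilde q_k(e_G)$ has no reason to vanish, and the Euclidean coefficients are symmetric under permuting the differentiations while the noncommuting iterated derivatives are not). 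The clean repair is to \emph{define} the coefficient operators as the Taylor coefficients $D^{(\alpha)}$ in the $\tilde q$-chart --- uniqueness of Euclidean Taylor coefficients then does the bookkeeping automatically, $D^{(\alpha)}$ is left-invariant of order $|\alpha|$, and $D^{(e_j)}=\partial_{X_{j,\mathfrak{D}}}$ is precisely the duality relation --- and then to express $D^{(\alpha)}$ as a combination of the $\partial_X^{(\beta)}$ with $|\beta|\le|\alpha|$, which is all that the applications in this paper actually use; the literal identity $D^{(\alpha)}=\partial_{X_{1,\mathfrak{D}}}^{\alpha_1}\cdots\partial_{X_{n,\mathfrak{D}}}^{\alpha_n}$ requires a special choice of the $q_{(j)}$ (canonical coordinates of the second kind) and even then carries a factor $1/\alpha!$ that must be accounted for. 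A second, smaller instance of the same over-optimism: the identity $\partial_{X}^{(\beta)}|_{x_1=x}R^{f}_{x_1,N}=R^{\partial_X^{(\beta)}f}_{x,N}$ is not ``immediate from left-invariance'', since left-invariant fields commute with left translations but not with the right translation $x\mapsto xy$, so $\partial_{X,x}^{(\beta)}[f(xy)]\neq(\partial_X^{(\beta)}f)(xy)$ in general; this step also has to be argued through the chart rather than asserted.
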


Using the notation above, and denoting by $\Delta_{\xi}^\alpha:=\Delta_{q_{(1)}}^{\alpha_1}\cdots   \Delta_{q_{(n)}}^{\alpha_{n}},$ we can introduce the subelliptic H\"ormander class of symbols of order $m\in \mathbb{R}$ of type $(\rho,\delta)$.

\begin{definition}[Subelliptic H\"ormander classes]\label{contracted''}
   Let $G$ be a compact Lie group and let $0\leqslant \delta,\rho\leqslant 1.$ Let us consider a sub-Laplacian $\mathcal{L}=-(X_1^2+\cdots +X_{k}^2)$ on $G,$ where the system of vector fields $X=\{X_i\}_{i=1}^{k}$ satisfies the H\"ormander condition of step $\kappa$.  We say that $\sigma \in {S}^{m,\,\mathcal{L}}_{\rho,\delta}(G\times \widehat{G})$ if 
   \begin{equation}\label{InIC}
      p_{\alpha,\beta,\rho,\delta,m,\textnormal{left}}(\sigma)':= \sup_{(x,[\xi])\in G\times \widehat{G} }\Vert \widehat{ \mathcal{M}}(\xi)^{(\rho|\alpha|-\delta|\beta|-m)}\partial_{X}^{(\beta)} \Delta_{\xi}^{\alpha}\sigma(x,\xi)\Vert_{\textnormal{op}} <\infty,
   \end{equation}
   \begin{equation}\label{InIIC}
      p_{\alpha,\beta,\rho,\delta,m,\textnormal{right}}(\sigma)':= \sup_{(x,[\xi])\in G\times \widehat{G} }\Vert (\partial_{X}^{(\beta)} \Delta_{\xi}^{\alpha} \sigma(x,\xi) ) \widehat{ \mathcal{M}}(\xi)^{(\rho|\alpha|-\delta|\beta|-m)}\Vert_{\textnormal{op}} <\infty,
   \end{equation} holds true for all $\alpha, \beta\in \mathbb{N}^n_0$.
  \end{definition}
By following the usual nomenclature, we  define:
\begin{equation*}
    \textnormal{Op}({S}^{m,\,\mathcal{L}}_{\rho,\delta}(G\times \widehat{G})):=\{A:C^{\infty}(G)\rightarrow \mathscr{D}'(G):\sigma_A\equiv\widehat{A}(x,\xi)\in {S}^{m,\,\mathcal{L}}_{\rho,\delta}(G\times \widehat{G}) \},
\end{equation*} with
\begin{equation*}
    Af(x)=\sum_{[\xi]\in \widehat{G}}d_\xi \textnormal{{Tr}}(\xi(x)\widehat{A}(x,\xi)\widehat{f}(\xi)),\,\,\,f\in C^\infty(G),\,x\in G.  
\end{equation*}
We will use the notation    $\widehat{ \mathcal{M}}$ for  the matrix-valued symbol of the operator $\mathcal{M}:=(1+\mathcal{L})^{\frac{1}{2}},$ and,  for every $[\xi]\in \widehat{G}$ and $s\in \mathbb{R},$ we define
   \begin{equation*}
       \widehat{ \mathcal{M}}(\xi)^{s}:=\textnormal{diag}[(1+\nu_{ii}(\xi)^2)^{\frac{s}{2}}]_{1\leqslant i\leqslant d_\xi},
   \end{equation*} where $\widehat{\mathcal{L}}(\xi)=:\textnormal{diag}[\nu_{ii}(\xi)^2]_{1\leqslant i\leqslant d_\xi}$ is the symbol of the sub-Laplacian $\mathcal{L}$ at $[\xi].$

\begin{definition}[Subelliptic amplitudes] A function $a:G\times G\times \widehat{G}\rightarrow \cup_{[\xi]\in \widehat{G}}\mathbb{C}^{d_\xi\times d_\xi}$ is an amplitude symbol if for every $[\xi]\in \widehat{G},$ $a(\cdot,\cdot,[\xi])$ is smooth. In addition, $a$ belongs to the subelliptic amplitude class of order $m$ and type $(\rho,\delta)$, $\mathcal{A}^{m,\,\mathcal{L}}_{\rho,\delta}(G\times G\times \widehat{G})$ if
   \begin{equation}\label{InIC2}
     \sup_{(x,y,[\xi])\in G\times G\times  \widehat{G} }\Vert \widehat{ \mathcal{M}}(\xi)^{(\rho|\alpha|-\delta(|\beta|+|\gamma|)-m)}\partial_{X}^{(\beta)}\partial_{Y}^{(\gamma)} \Delta_{\xi}^{\alpha}a(x,y,\xi)\Vert_{\textnormal{op}} <\infty,
   \end{equation}
   and 
   \begin{equation}\label{InIIC2}
      \sup_{(x,y,[\xi])\in G\times G\times  \widehat{G} }\Vert (\partial_{X}^{(\beta)}\partial_{Y}^{(\gamma)} \Delta_{\xi}^{\alpha} a(x,y,\xi) ) \widehat{ \mathcal{M}}(\xi)^{(\rho|\alpha|-\delta(|\beta|+|\gamma|)-m)}\Vert_{\textnormal{op}} <\infty.
   \end{equation}
  The amplitude operator associated with an amplitude $a\in \mathcal{A}^{m,\,\mathcal{L}}_{\rho,\delta}(G\times G\times \widehat{G})$ is defined via
\begin{equation*}
    Af(x)\equiv \textnormal{AOp}(a)f(x):=\sum_{[\xi]\in \widehat{G}}d_\xi \textnormal{{Tr}}\left(\xi(x)\int\limits_{G} a(x,y,\xi)\xi(y)^{*}f(y)dy\right),  
\end{equation*}where $f\in C^\infty(G).$
\end{definition}
The decay properties of subelliptic symbols are summarized in the following lemma (see \cite[Chapter 4]{CardonaRuzhanskyC}).

\begin{lemma}\label{lemadecaying1}
Let $G$ be a compact Lie group and  let $0\leqslant \delta,\rho\leqslant 1.$ If $a\in {S}^{m,\,\mathcal{L}}_{\rho,\delta}(G\times \widehat{G}),$ then for every $\alpha,\beta\in \mathbb{N}_0^n,$ there exists $C_{\alpha,\beta}>0$ satisfying the estimates
\begin{equation*}
    \Vert \partial_{X}^{(\beta)} \Delta_{\xi}^{\alpha}a(x,\xi)\Vert_{\textnormal{op}}\leqslant C_{\alpha,\beta}\sup_{1\leqslant i\leqslant d_\xi}(1+\nu_{ii}(\xi)^2)^{\frac{m-\rho|\alpha|+\delta|\beta|}{2 }},
\end{equation*}uniformly in $(x,[\xi])\in G\times \widehat{G}.$ 
\end{lemma}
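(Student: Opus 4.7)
The plan is to derive the stated pointwise operator-norm bound directly from the defining seminorms in \eqref{InIC} (or \eqref{InIIC}) by factoring out the weight $\widehat{\mathcal{M}}(\xi)$. Concretely, for fixed multi-indices $\alpha,\beta\in\mathbb{N}_0^n$, set
$$s:=\rho|\alpha|-\delta|\beta|-m,$$
so that \eqref{InIC} reads $\|\widehat{\mathcal{M}}(\xi)^{s}\partial_X^{(\beta)}\Delta_\xi^\alpha a(x,\xi)\|_{\mathrm{op}}\le p_{\alpha,\beta,\rho,\delta,m,\mathrm{left}}'(a)=:C_{\alpha,\beta}$ uniformly in $(x,[\xi])\in G\times\widehat{G}$. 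Since $\widehat{\mathcal{M}}(\xi)^{s}=\mathrm{diag}[(1+\nu_{ii}(\xi)^2)^{s/2}]$ is a strictly positive diagonal matrix (because $1+\nu_{ii}(\xi)^2\ge 1>0$), it is invertible with inverse $\widehat{\mathcal{M}}(\xi)^{-s}$, and one has the trivial factorisation
$$\partial_X^{(\beta)}\Delta_\xi^\alpha a(x,\xi)=\widehat{\mathcal{M}}(\xi)^{-s}\bigl[\widehat{\mathcal{M}}(\xi)^{s}\partial_X^{(\beta)}\Delta_\xi^\alpha a(x,\xi)\bigr].$$

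Next I would invoke submultiplicativity of the operator norm on $\mathbb{C}^{d_\xi\times d_\xi}$ together with the elementary fact that the operator norm of a diagonal matrix equals the supremum of the absolute values of its entries. This gives
$$\bigl\|\partial_X^{(\beta)}\Delta_\xi^\alpha a(x,\xi)\bigr\|_{\mathrm{op}}\le\bigl\|\widehat{\mathcal{M}}(\xi)^{-s}\bigr\|_{\mathrm{op}}\cdot\bigl\|\widehat{\mathcal{M}}(\xi)^{s}\partial_X^{(\beta)}\Delta_\xi^\alpha a(x,\xi)\bigr\|_{\mathrm{op}},$$
where
$$\bigl\|\widehat{\mathcal{M}}(\xi)^{-s}\bigr\|_{\mathrm{op}}=\sup_{1\le i\le d_\xi}(1+\nu_{ii}(\xi)^2)^{-s/2}=\sup_{1\le i\le d_\xi}(1+\nu_{ii}(\xi)^2)^{\frac{m-\rho|\alpha|+\delta|\beta|}{2}}.$$
Combining the two displays with the seminorm estimate from the class definition yields exactly the stated inequality with $C_{\alpha,\beta}=p'_{\alpha,\beta,\rho,\delta,m,\mathrm{left}}(a)$, uniformly in $(x,[\xi])$.

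There is essentially no obstacle here: the whole argument is purely algebraic, reducing to the submultiplicativity of $\|\cdot\|_{\mathrm{op}}$ and the spectral computation of the operator norm of the diagonal weight $\widehat{\mathcal{M}}(\xi)^{-s}$. The only minor point worth noting is that the sign of the exponent $(m-\rho|\alpha|+\delta|\beta|)/2$ is irrelevant for the argument since $\widehat{\mathcal{M}}(\xi)$ is everywhere bounded below by the identity; the supremum on the right-hand side simply gives the operator norm of $\widehat{\mathcal{M}}(\xi)^{-s}$ in all cases. An entirely symmetric derivation based on \eqref{InIIC} (placing the weight on the right) would yield the same conclusion, which makes the statement independent of the left/right convention in Definition \ref{contracted''}.
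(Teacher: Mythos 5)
Your argument is correct, and it is the natural/expected derivation: the paper itself gives no proof here but cites \cite[Chapter 4]{CardonaRuzhanskyC}, and the lemma is indeed an immediate unpacking of Definition \ref{contracted''} by exactly the factorisation you wrote. The key facts you use — that $\widehat{\mathcal{M}}(\xi)^{s}$ is an invertible diagonal matrix, that the $\ell^2\to\ell^2$ operator norm of a diagonal matrix is the supremum of the moduli of its entries, and submultiplicativity of $\|\cdot\|_{\mathrm{op}}$ — are all that is needed, and your observation that the sign of $s=\rho|\alpha|-\delta|\beta|-m$ is irrelevant (and that the symmetric argument via \eqref{InIIC} gives the same conclusion) is correct. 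One cosmetic remark: the constant you obtain, $C_{\alpha,\beta}=p'_{\alpha,\beta,\rho,\delta,m,\mathrm{left}}(a)$, is exactly the defining seminorm of the class, which makes explicit that the inequality is sharp with respect to the symbol seminorms.
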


In the next theorem we describe some fundamental properties of the subelliptic calculus \cite{CardonaRuzhanskyC}.
\begin{theorem}\label{calculus} Let $0\leqslant \delta<\rho\leqslant 1,$ and let  $\Psi^{m,\,\mathcal{L}}_{\rho,\delta}:=\textnormal{Op}({S}^{m,\,\mathcal{L}}_{\rho,\delta}(G\times \widehat{G})),$ for every $m\in \mathbb{R}.$ Then,
\begin{itemize}
    \item [-] The mapping $A\mapsto A^{*}:\Psi^{m,\,\mathcal{L}}_{\rho,\delta}\rightarrow \Psi^{m,\,\mathcal{L}}_{\rho,\delta}$ is a continuous linear mapping between Fr\'echet spaces and  the  symbol of $A^*,$ $\sigma_{A^*}(x,\xi)$ satisfies the asymptotic expansion,
 \begin{equation*}
    \widehat{A^{*}}(x,\xi)\sim \sum_{|\alpha|= 0}^\infty\Delta_{\xi}^\alpha\partial_{X}^{(\alpha)} (\widehat{A}(x,\xi)^{*}).
 \end{equation*} This means that, for every $N\in \mathbb{N},$ and for all $\ell\in \mathbb{N},$
\begin{equation*}
   \Small{ \Delta_{\xi}^{\alpha_\ell}\partial_{X}^{(\beta)}\left(\widehat{A^{*}}(x,\xi)-\sum_{|\alpha|\leqslant N}\Delta_{\xi}^\alpha\partial_{X}^{(\alpha)} (\widehat{A}(x,\xi)^{*}) \right)\in {S}^{m-(\rho-\delta)(N+1)-\rho\ell+\delta|\beta|,\,\mathcal{L}}_{\rho,\delta}(G\times\widehat{G}) },
\end{equation*} where $|\alpha_\ell|=\ell.$
\item [-] The mapping $(A_1,A_2)\mapsto A_1\circ A_2: \Psi^{m_1,\,\mathcal{L}}_{\rho,\delta}\times \Psi^{m_2,\,\mathcal{L}}_{\rho,\delta}\rightarrow \Psi^{m_3,\,\mathcal{L}}_{\rho,\delta}$ is a continuous bilinear mapping between Fr\'echet spaces, and the symbol of $A=A_{1}\circ A_2$ satisfies is given by the asymptotic formula
\begin{equation*}
    \sigma_A(x,\xi)\sim \sum_{|\alpha|= 0}^\infty(\Delta_{\xi}^\alpha\widehat{A}_{1}(x,\xi))(\partial_{X}^{(\alpha)} \widehat{A}_2(x,\xi)),
\end{equation*}which, in particular, means that, for every $N\in \mathbb{N},$ and for all $\ell \in\mathbb{N},$
\begin{align*}
    &\Delta_{\xi}^{\alpha_\ell}\partial_{X}^{(\beta)}\left(\sigma_A(x,\xi)-\sum_{|\alpha|\leqslant N}  (\Delta_{\xi}^\alpha\widehat{A}_{1}(x,\xi))(\partial_{X}^{(\alpha)} \widehat{A}_2(x,\xi))  \right)\\
    &\hspace{2cm}\in {S}^{m_1+m_2-(\rho-\delta)(N+1)-\rho\ell+\delta|\beta|,\,\mathcal{L}}_{\rho,\delta}(G\times \widehat{G}),
\end{align*}for all  $\alpha_\ell \in \mathbb{N}_0^n$ with $|\alpha_\ell|=\ell.$
\item [-] For  $0\leqslant \delta< \rho\leqslant    1,$  (or for $0\leq \delta\leq \rho\leq 1,$  $\delta<1/\kappa$) let us consider a continuous linear operator $A:C^\infty(G)\rightarrow\mathscr{D}'(G)$ with symbol  $\sigma\in {S}^{0,\,\mathcal{L}}_{\rho,\delta}(G\times \widehat{G})$. Then $A$ extends to a bounded operator from $L^2(G)$ to  $L^2(G).$ 
\end{itemize}
\end{theorem}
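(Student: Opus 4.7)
The three claims share a common reduction: every amplitude operator with amplitude $a\in \mathcal{A}^{m,\mathcal{L}}_{\rho,\delta}(G\times G\times \widehat{G})$ is a pseudodifferential operator in $\Psi^{m,\mathcal{L}}_{\rho,\delta}$ whose symbol admits an asymptotic expansion in $(\Delta_\xi^\alpha \partial_Y^{(\alpha)} a)(x,y,\xi)|_{y=x}$. I would prove this by Taylor expanding $a(x,y,\xi)$ in the $y$-variable about the diagonal via the strongly admissible collection fixed in Lemma \ref{Taylorseries}; multiplication by each factor $q_{(j)}(y^{-1}\cdot)$ on the spatial side translates on the Fourier side into the action of $\Delta_{q_{(j)}}$ by \eqref{taylordifferences}, converting a Taylor polynomial of degree $N$ into a genuine symbol expansion. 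The remainder of order $N$ is then estimated as a symbol of order $m-(\rho-\delta)N$ by combining the Taylor remainder bound of Lemma \ref{Taylorseries} with the decay properties from Lemma \ref{lemadecaying1}.

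Granting the reduction, the adjoint formula follows from the duality $(Af,g)_{L^2}=(f,A^*g)_{L^2}$ combined with the quantization formula \eqref{RuzhanskyTurunenQuanti}, which identifies $A^*$ as the amplitude operator with amplitude $b(x,y,\xi)=\widehat{A}(y,\xi)^*$; applying the reduction (noting that $\partial_Y^{(\alpha)}$ becomes $\partial_X^{(\alpha)}$ after setting $y=x$) and the Leibniz rule of Proposition \ref{Leibnizrule} yields the stated expansion for $\widehat{A^*}(x,\xi)$. The composition formula is obtained in a parallel manner: expressing $\widehat{A_2 f}(\xi)$ via its defining integral and substituting into the quantization of $A_1$ presents $A_1\circ A_2$ as an amplitude operator whose amplitude factorizes into $\widehat{A_1}(x,\xi)$ and $\widehat{A_2}(y,\xi)$ (the intertwining factors $\xi(x)^*\xi(y)$ being absorbed on the Fourier side). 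The same Taylor reduction then places the difference operators $\Delta_\xi^\alpha$ on the left factor and the vector field derivatives $\partial_X^{(\alpha)}$ on the right factor, yielding the composition formula.

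For the $L^2$-boundedness of an operator $A\in \Psi^{0,\mathcal{L}}_{\rho,\delta}$, the plan is the standard $(A^*A)^k$ iteration. Using the composition and adjoint formulas just established, $A^*A\in \Psi^{0,\mathcal{L}}_{\rho,\delta}$ with principal symbol $\widehat{A}(x,\xi)^*\widehat{A}(x,\xi)\geq 0$. Choosing $M>0$ strictly larger than the supremum of the operator norm of this principal symbol, I would construct $B\in \Psi^{0,\mathcal{L}}_{\rho,\delta}$ whose symbol is an approximate square root of $MI-\widehat{A^*A}$, so that $B^*B=MI-A^*A+R_1$ with $R_1\in \Psi^{-(\rho-\delta),\mathcal{L}}_{\rho,\delta}$. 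Iterating this construction on the remainders (each step lowering the order by $\rho-\delta>0$, or by $1/\kappa-\delta>0$ under the alternate hypothesis), one lands after finitely many steps in a smoothing class, which is $L^2$-bounded by Sobolev embedding. Unwinding the iteration gives $\Vert Au\Vert_{L^2}^2\leq M\Vert u\Vert_{L^2}^2 + \textnormal{bounded error}$.

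The main obstacle throughout is the non-commutativity of the matrix weight $\widehat{\mathcal{M}}(\xi)$ with generic symbols: both the left and right inequalities \eqref{InIC}--\eqref{InIIC} must be used simultaneously and cannot be interchanged freely, and the subelliptic gain per difference operator is only $\widehat{\mathcal{M}}(\xi)^{-\rho}$ rather than a genuine power of the elliptic weight $\langle\xi\rangle$. Managing this bookkeeping in the remainder terms of the two asymptotic expansions is the delicate point, and it is also what dictates the condition $\delta<1/\kappa$ in the $L^2$-boundedness statement: since the subelliptic weight satisfies $\widehat{\mathcal{M}}(\xi)^{1/\kappa}\lesssim \langle\xi\rangle\lesssim \widehat{\mathcal{M}}(\xi)$, the loss $\widehat{\mathcal{M}}(\xi)^{\delta}$ incurred by each spatial derivative must remain strictly below the effective gain of $\widehat{\mathcal{M}}(\xi)^{\rho/\kappa}$ for the square-root iteration to converge.
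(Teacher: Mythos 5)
A preliminary remark for calibration: this paper does not prove Theorem \ref{calculus} itself but imports it from \cite{CardonaRuzhanskyC}; the only piece of the calculus actually proved here is the amplitude-to-symbol reduction, Proposition \ref{amplitude}. Your plan for the first two bullets runs along exactly that route (Taylor expansion on the diagonal via Lemma \ref{Taylorseries}, conversion of the factors $q_{\gamma}$ into difference operators, remainder estimates as in Lemma \ref{lemadecaying1} and Proposition \ref{amplitude}), and for the adjoint the identification of $A^{*}$ with the amplitude $b(x,y,\xi)=\widehat{A}(y,\xi)^{*}$ is the standard and correct reduction. One loose point: for the composition, the naive substitution of $\widehat{A_{2}f}(\xi)$ into the quantization of $A_{1}$ does not produce an amplitude of the factorized form $\widehat{A}_{1}(x,\xi)\widehat{A}_{2}(y,\xi)$, because $\widehat{A}_{2}(y,\eta)$ sits inside a trace over a different representation $\eta$; one has to use instead the identity $(A_{2}\xi)(y)=\xi(y)\widehat{A}_{2}(y,\xi)$ (or the right-convolution kernel of $A_{1}$) and then Taylor expand $\widehat{A}_{2}(y,\xi)$ at $y=x$. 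This is fixable and, once fixed, coincides with the machinery behind Proposition \ref{amplitude}.

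The genuine gap is in the third bullet. The $(A^{*}A)$-symmetrization with an approximate square root gains exactly $\rho-\delta$ subelliptic orders per step, because that is the gain in the expansions of the first two bullets; it therefore proves $L^{2}$-boundedness only when $\delta<\rho$, and produces nothing in the Calder\'on--Vaillancourt case $\rho=\delta$, which is precisely what the alternate hypothesis $0\leq\delta\leq\rho\leq1$, $\delta<1/\kappa$ is meant to cover. Your claimed per-step gain of $1/\kappa-\delta$ has no source in the argument: by \eqref{GarettoRuzhanskyIneq} a subelliptic gain $\widehat{\mathcal{M}}(\xi)^{-\rho}$ translates into an elliptic gain of at most $\langle\xi\rangle^{-\rho/\kappa}$, so running the iteration through the inclusion $S^{0,\,\mathcal{L}}_{\rho,\delta}\subset\mathscr{S}^{0}_{\rho/\kappa,\delta}(G)$ requires $\delta<\rho/\kappa$, which reduces to $\delta<1/\kappa$ only when $\rho=1$ and fails outright when $\rho=\delta$ and $\kappa\geq2$; measured inside the subelliptic calculus the relevant comparison is $\delta$ versus $\rho$, not $\delta$ versus $\rho/\kappa$. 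Hence the case $\rho=\delta<1/\kappa$ needs a genuinely different proof (this is the subelliptic Calder\'on--Vaillancourt theorem of \cite{CardonaRuzhanskyC}), not the square-root iteration. In addition, even in the regime $\delta<\rho$ your scheme tacitly assumes that the matrix square root $(MI-\widehat{A^{*}A}(x,\xi))^{1/2}$ again lies in $S^{0,\,\mathcal{L}}_{\rho,\delta}(G\times\widehat{G})$; with the noncommuting weight $\widehat{\mathcal{M}}(\xi)$ and the two-sided conditions \eqref{InIC}--\eqref{InIIC} this membership is itself a nontrivial functional-calculus step and should be argued, not assumed.
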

\begin{remark}\label{SerenaRemark}
Let us remark that for all $m>0$, we have  $$\mathscr{S}^{\frac{m}{\kappa}}_{\rho,\frac{\delta}{\kappa}}(G)\subset S^{m,\,\mathcal{L}}_{\rho,\delta}(G\times \widehat{G}).$$
To show this property we will make use of the estimate
\begin{equation}\label{GarettoRuzhanskyIneq}
  \langle \xi\rangle^{\frac{1}{\kappa}}\lesssim  (1+\nu_{ii}(\xi)^2)^{\frac{1}{2}}\lesssim \langle \xi\rangle
\end{equation}  proved in Proposition 3.1 of \cite{GarettoRuzhansky2015}.
Indeed, for $\sigma \in \mathscr{S}^{\frac{m}{\kappa}}_{\rho,\frac{\delta}{\kappa}}(G),$ we have that
$$  \|\partial_{X}^{(\beta)}\Delta_\xi^\alpha \sigma(x,\xi)  \|_{\textnormal{op}}\lesssim \langle \xi \rangle^{\frac{m}{\kappa}-\rho|\alpha|+\frac{\delta}{\kappa}|\beta|},  $$ and, consequently, we have
\begin{align*}
     \|\widehat{M}(\xi)^{-m+\rho|\alpha|-\delta|\beta|}\partial_{X}^{(\beta)}\Delta_\xi^\alpha \sigma(x,\xi)  \|_{\textnormal{op}}\leq \sup_{1\leq i\leq d_\xi}\langle \nu_{ii}(\xi)\rangle^{-m+\rho|\alpha|-\delta|\beta|}\langle \xi \rangle^{\frac{m}{\kappa}-\rho|\alpha|+\frac{\delta}{\kappa}|\beta|}.
\end{align*}From the previous inequality, for $-m+\rho|\alpha|+\delta|\beta|<0,$ we obtain  
\begin{align*}
    \langle \nu_{ii}(\xi)\rangle^{-m+\rho|\alpha|-\delta|\beta|}\langle \xi \rangle^{\frac{m}{\kappa}-\rho|\alpha|+\frac{\delta}{\kappa}|\beta|}&\lesssim \langle \xi\rangle^{\frac{-m+\rho|\alpha|-\delta|\beta|}{\kappa}}\langle \xi \rangle^{\frac{m}{\kappa}-\rho|\alpha|+\frac{\delta}{\kappa}|\beta|}\lesssim 1,
\end{align*} while for $-m-\rho|\alpha|+\delta|\beta|\geq 0,$ we have
\begin{align*}
    \langle \nu_{ii}(\xi)\rangle^{-m+\rho|\alpha|-\delta|\beta|}\langle \xi \rangle^{\frac{m}{\kappa}-\rho|\alpha|+\frac{\delta}{\kappa}|\beta|}&\lesssim \langle \xi\rangle^{-m+\rho|\alpha|-\delta|\beta|}\langle \xi \rangle^{\frac{m}{\kappa}-\rho|\alpha|+\frac{\delta}{\kappa}|\beta|}\lesssim 1,
\end{align*}proving that $\sigma\in S^{m,\,\mathcal{L}}_{\rho,\delta}(G\times \widehat{G}).$
\end{remark}

\begin{remark}
The last assertion in Theorem \ref{calculus} remains valid if we consider $0\leq \delta\leq \rho\leq 1,$ $\delta<1/\kappa.$ This is  the subelliptic Calder\'on-Vaillancourt theorem proved in \cite{CardonaRuzhanskyC},  which gives the boundedness  of pseudo-differential operators in the subelliptic calculus in subelliptic Sobolev spaces (see Theorem \ref{Sobolev}).
\end{remark}

\begin{proposition}
Let $A:C^{\infty}(G)\rightarrow \mathscr{D}'(G)$ be a continuous linear operator with symbol $a\in S^{m,\,\mathcal{L}}_{\rho,\delta}(G\times \widehat{G}),$  $0\leq \delta< \rho\leq 1.$ Then $A:H^{s,\,\mathcal{L}}(G)\rightarrow H^{s-m,\,\mathcal{L}}(G) $ extends to a bounded operator for all $s\in \mathbb{R}.$
\end{proposition}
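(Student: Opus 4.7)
The plan is to reduce the Sobolev mapping property to the $L^2$-boundedness of a zero-order subelliptic operator, which is precisely what the subelliptic Calder\'on--Vaillancourt statement recalled at the end of Theorem \ref{calculus} provides. First I would note that, directly from the definition of the subelliptic Sobolev norm, $\|v\|_{H^{s,\mathcal{L}}(G)}=\|\mathcal{M}^s v\|_{L^2(G)}$ where $\mathcal{M}:=(1+\mathcal{L})^{1/2}$, and that $\mathcal{M}^s$ is a left-invariant Fourier multiplier with diagonal symbol $\widehat{\mathcal{M}}(\xi)^s=\mathrm{diag}[(1+\nu_{ii}(\xi)^2)^{s/2}]_{1\leq i\leq d_\xi}$. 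A direct inspection of the seminorms \eqref{InIC}--\eqref{InIIC}, in which the $x$-derivatives are trivially zero and the difference-operator estimates follow from the spectral properties of $\mathcal{L}$ worked out in \cite{CardonaRuzhanskyC}, places this multiplier in the class $S^{s,\mathcal{L}}_{1,0}(G\times\widehat{G})\subset S^{s,\mathcal{L}}_{\rho,\delta}(G\times\widehat{G})$ for every admissible pair $(\rho,\delta)$.

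Once this is in place I would set $T:=\mathcal{M}^{s-m}\circ A\circ \mathcal{M}^{-s}$ and apply the composition calculus of Theorem \ref{calculus} twice. Since $\widehat{\mathcal{M}}^{-s}$ is $x$-independent, all terms with $|\alpha|\geq 1$ in the asymptotic expansion for $\sigma_{A\circ \mathcal{M}^{-s}}$ vanish (because $\partial_X^{(\alpha)}\widehat{\mathcal{M}}^{-s}=0$); the principal term $\sigma_A(x,\xi)\widehat{\mathcal{M}}(\xi)^{-s}$ already belongs to $S^{m-s,\mathcal{L}}_{\rho,\delta}$ by the decay estimates of Lemma \ref{lemadecaying1}, and the error is smoothing. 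Composing once more on the left with $\mathcal{M}^{s-m}$ and again invoking Theorem \ref{calculus} yields $T\in\Psi^{0,\mathcal{L}}_{\rho,\delta}$. The subelliptic Calder\'on--Vaillancourt statement then guarantees that $T$ extends to a bounded operator on $L^2(G)$, whence
\begin{equation*}
\|Au\|_{H^{s-m,\mathcal{L}}(G)}=\|\mathcal{M}^{s-m}Au\|_{L^2(G)}=\|T(\mathcal{M}^s u)\|_{L^2(G)}\lesssim\|\mathcal{M}^s u\|_{L^2(G)}=\|u\|_{H^{s,\mathcal{L}}(G)}
\end{equation*}
for every $u\in C^\infty(G)$, and the claimed bounded extension of $A$ from $H^{s,\mathcal{L}}(G)$ into $H^{s-m,\mathcal{L}}(G)$ then follows by density.

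The only delicate ingredient is the first step: verifying that $\mathcal{M}^s$ belongs to $S^{s,\mathcal{L}}_{1,0}(G\times\widehat{G})$, i.e., controlling $\|\widehat{\mathcal{M}}(\xi)^{|\alpha|-s}\Delta_\xi^\alpha \widehat{\mathcal{M}}(\xi)^s\|_{\mathrm{op}}$ uniformly in $[\xi]\in\widehat{G}$. Because $\widehat{\mathcal{L}}(\xi)$ is only diagonalizable in a basis of $H_\xi$ that may depend on $[\xi]$, the difference operators $\Delta_\xi^\alpha$ do not act entrywise on $\widehat{\mathcal{M}}(\xi)^s$; instead one must exploit the Leibniz-type identity of Proposition \ref{Leibnizrule} together with the spectral calculus of $\mathcal{L}$ to reduce the problem to the difference properties of the eigenvalues. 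This is the subelliptic counterpart of the classical fact that $(1-\Delta)^{s/2}\in S^s_{1,0}$ and is already part of the symbolic calculus established in \cite{CardonaRuzhanskyC}; once it is granted, the remaining steps above are purely formal consequences of Theorem \ref{calculus}.
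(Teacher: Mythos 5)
Your argument is correct and follows essentially the same route as the paper: both reduce the Sobolev estimate to $L^2$-boundedness of the conjugated operator $\mathcal{M}^{s-m}A\mathcal{M}^{-s}$, show via the subelliptic calculus that it lies in $S^{0,\mathcal{L}}_{\rho,\delta}$, and conclude by the subelliptic Calder\'on--Vaillancourt theorem. The paper simply compresses the calculus step into one sentence and invokes the closed graph theorem where you use density, but the substance is identical; note also that composing $A$ on the right with the $x$-independent multiplier $\mathcal{M}^{-s}$ gives the \emph{exact} symbol $\sigma_A(x,\xi)\widehat{\mathcal{M}}(\xi)^{-s}$, so no asymptotic remainder is needed at that stage.
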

\begin{proof} In view of the closed graph Theorem, we only need to show that there exists $C>0$ such that 
\begin{equation}\label{Sobolev}
    \Vert Au \Vert_{H^{s-m,\,\mathcal{L}}(G)}=\Vert \mathcal{M}_{s-m}Au\Vert_{L^2(G)}\leq C\Vert u \Vert_{H^{s,\,\mathcal{L}}(G)},\,\,u\in C^{\infty}(G).
\end{equation}By replacing $u$ by $\mathcal{M}_{-s}u,$ we can see that \eqref{Sobolev} is equivalent to the following estimate
\begin{equation}\label{Sobolev2}
    \Vert \mathcal{M}_{s-m}A\mathcal{M}_{-s}u\Vert_{L^2(G)}\leq C\Vert u \Vert_{L^2(G)},\,\,u\in C^{\infty}(G),
\end{equation}which, once again by the closed graph theorem, is equivalent to show that $A_{s}:=\mathcal{M}_{s-m}A\mathcal{M}_{-s}$ admits a bounded extension from $C^{\infty}(G)$ to $L^2(G).$ By the subelliptic calculus $A_s\in S^{0,\,\mathcal{L}}_{\rho,\delta}(G\times \widehat{G}).$ This, finally, gives the existence of a bounded extension of $A_{s}$ as a consequence of the subelliptic Calder\'on-Vaillancourt Theorem. 
\end{proof}

\begin{proposition}\label{amplitude}Let $0\leq \delta<\rho\leq 1,$ and let  $a\in \mathcal{A}^{m,\,\mathcal{L}}_{\rho,\delta}(G\times G\times \widehat{G}).$ Then $A\equiv \textnormal{AOp}(a)$ is a subelliptic pseudo-differential operator with symbol $\sigma\in S^{m,\,\mathcal{L}}_{\rho,\delta}(G\times \widehat{G}),$ that is  $A\equiv\textnormal{Op}(\sigma),$ which obeys to the formula
\begin{equation}\label{asy:exp:ampl}
    \sigma(x,\xi)\sim \sum_{\alpha \in \mathbb{N}_0^n}(\partial_{Y}^{(\alpha)}\Delta_\xi^\alpha a(x,y,\xi))|_{y=x},
\end{equation} in the sense that, for all $N\in \mathbb{N}$, and for all $\ell \in\mathbb{N},$
\begin{align}\label{asympexp}
    &\Delta_{\xi}^{\alpha_\ell}\partial_{X}^{(\beta)}\left(\sigma(x,\xi)-\sum_{|\alpha|\leqslant N}  (\partial_{Y}^{(\alpha)}\Delta_\xi^\alpha a(x,y,\xi))|_{y=x}  \right)\in {S}^{m-(\rho-\delta)(N+1)-\rho\ell+\delta|\beta|,\,\mathcal{L}}_{\rho,\delta},
\end{align}for every  $\alpha_\ell \in \mathbb{N}_0^n$ with $|\alpha_\ell|=\ell.$
\end{proposition}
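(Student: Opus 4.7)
The plan is to adapt the classical amplitude-reduction argument to the subelliptic global calculus on $G$, following the same scheme used for the elliptic case in \cite{Ruz} but with subelliptic weights $\widehat{\mathcal{M}}(\xi)$ in place of $\langle\xi\rangle$. First I would express the action of $A=\textnormal{AOp}(a)$ as a right-convolution operator: writing
$$Af(x)=\int_G R_x(y^{-1}x)\, f(y)\, dy, \qquad R_x(z)=\sum_{[\xi]\in\widehat{G}} d_\xi\, \textnormal{Tr}\bigl(\xi(z)\, a(x,xz^{-1},\xi)\bigr),$$
so that the full matrix-valued symbol of $A$ is $\sigma(x,\xi)=\widehat{R_x}(\xi)$. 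This reduces the problem to computing $\widehat{R_x}(\xi)$ and showing it lies in $S^{m,\,\mathcal{L}}_{\rho,\delta}(G\times\widehat{G})$ with the stated asymptotic expansion.

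Second, I would fix a strongly admissible collection $\mathfrak{D}=\{\Delta_{q_{(j)}}\}_{j=1}^n$ and the associated basis $X_{\mathfrak{D}}$ from Lemma \ref{Taylorseries}, and apply Taylor's formula in the $y$ variable, expanding around $y=x$, to obtain
$$a(x,y,\xi)=\sum_{|\alpha|<N} q^{\alpha}(y^{-1}x)\, \partial_{Y}^{(\alpha)} a(x,y',\xi)\Big|_{y'=x} + R^{a}_{x,N}(y,\xi),$$
with $q^{\alpha}=q_{(1)}^{\alpha_1}\cdots q_{(n)}^{\alpha_n}$. Plugging the principal terms into the expression for $R_x$ and using the defining duality $\widehat{q^{\alpha}f}(\xi)=\Delta_{\xi}^{\alpha}\widehat{f}(\xi)$ of the difference operators (see \eqref{taylordifferences}), I would identify the contribution of the term of order $|\alpha|$ as $(\partial_{Y}^{(\alpha)}\Delta_{\xi}^{\alpha}a)(x,y,\xi)|_{y=x}$, up to errors absorbable into strictly higher-order terms of the expansion. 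This identification is the main obstacle: one must carefully handle the interplay between the matrices $\xi(x)^{\ast}$, $\xi(y)^{\ast}$ and the matrix-valued amplitude $a(x,y,\xi)$, using cyclicity of the trace together with a Fourier-side ``summation by parts'' in the spirit of the Leibniz rule of Proposition \ref{Leibnizrule}, so that the commutators of $\xi(x)$ and $\xi(y)$ produce terms that feed back into the tail of the expansion rather than spoiling the leading term.

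Third, I would control the remainder $R^{a}_{x,N}$ using the quantitative Taylor estimate in Lemma \ref{Taylorseries}, combined with the amplitude seminorms \eqref{InIC2}-\eqref{InIIC2} and the subelliptic decay bounds in Lemma \ref{lemadecaying1}. The point is that in the amplitude class each differentiation $\partial_Y^{(\alpha)}$ costs at most $\widehat{\mathcal{M}}(\xi)^{\delta|\alpha|}$ while each factor $q^{\alpha}(y^{-1}x)$, once converted into $\Delta_{\xi}^{\alpha}$ on the Fourier side, gains $\widehat{\mathcal{M}}(\xi)^{-\rho|\alpha|}$, so that each order of expansion improves the decay by $\widehat{\mathcal{M}}(\xi)^{-(\rho-\delta)}$; iterating $N$ times produces the bound $\widehat{\mathcal{M}}(\xi)^{m-(\rho-\delta)N}$ for the remainder symbol. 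Applying in addition $\Delta_{\xi}^{\alpha_\ell}\partial_{X}^{(\beta)}$ and repeating the same estimate yields precisely the order claimed in \eqref{asympexp}.

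Finally, I would verify directly from \eqref{InIC2}-\eqref{InIIC2} that each main term $(\partial_{Y}^{(\alpha)}\Delta_{\xi}^{\alpha}a)(x,y,\xi)|_{y=x}$ belongs to $S^{m-(\rho-\delta)|\alpha|,\,\mathcal{L}}_{\rho,\delta}(G\times\widehat{G})$, so that combining this with the remainder bound of the previous step gives the asymptotic expansion \eqref{asy:exp:ampl} in the precise sense of \eqref{asympexp}. The hard part is the noncommutative bookkeeping in Step 2; once this is done, the remainder estimates in Step 3 are a routine application of the amplitude seminorms and the decay bounds of Lemma \ref{lemadecaying1}.
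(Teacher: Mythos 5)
Your Steps 1 and 2 follow the paper's argument essentially verbatim: one writes $\sigma_A(x,\xi)=\int_G \xi(z)^* \sum_{[\eta]}d_\eta\textnormal{Tr}[\eta(z)a(x,xz^{-1},\eta)]dz$, applies the Taylor expansion from Lemma \ref{Taylorseries} in the second variable, and uses the defining property \eqref{taylordifferences} of the difference operators to recognise the terms $(\partial_Y^{(\alpha)}\Delta_\xi^\alpha a)(x,y,\xi)|_{y=x}$. Incidentally, the ``noncommutative bookkeeping'' you single out as the hard part is not in fact the obstacle: since the $q_{(j)}$ are scalar-valued, they slide freely past the matrices, and the identification falls out directly once one works with the right-convolution kernels and uses cyclicity of the trace.

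The genuine gap in your proposal is Step 3. You dismiss the remainder estimate as ``a routine application of the amplitude seminorms and the decay bounds of Lemma \ref{lemadecaying1}'', supported by the heuristic that each $q^\alpha$ factor becomes a $\Delta_\xi^\alpha$ gain of $\widehat{\mathcal{M}}(\xi)^{-\rho|\alpha|}$. That heuristic is valid for the finitely many \emph{explicit} terms of the expansion, but it does not apply to the Taylor \emph{remainder}: the remainder from Lemma \ref{Taylorseries} is a pointwise bound on a convolution kernel, $|R_{x,N}^{k_{A,x,x}}(z)|\lesssim|z|^N\max_{|\alpha|\leq N}\|\partial_X^{(\alpha)}k_{A,x,x}\|_{L^\infty}$, not a decomposition into $q^\alpha$-factors that can be traded for difference operators on the Fourier side. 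Turning this kernel-side bound into an operator-norm estimate for the symbol requires a separate mechanism that your outline does not supply. In the paper this is done by passing to $\|\cdot\|_{L^1(G,dz)}$ on the kernel, pre-multiplying by $(1+\mathcal{L}_z)^{M'/2}$ to generate the required $\widehat{\mathcal{M}}(\xi)^{M'}$ factor, applying the Leibniz rule, combining the $|z|^{N-|\gamma|}$ decay from Lemma \ref{Taylorseries} with the singularity estimate $\lesssim|z|^{-(s'+Q)/\rho}$ for right-convolution kernels from Proposition 4.24 of \cite{CardonaRuzhanskyC}, and then choosing $M'$ and $N$ so that the resulting $|z|$-integral over $G$ converges with the claimed gain. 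Without this kernel-decay input and the careful balance between $M'$ and $N$, the remainder bound \eqref{asympexp} does not follow; the seminorms \eqref{InIC2}--\eqref{InIIC2} alone cannot produce a negative power of $\widehat{\mathcal{M}}(\xi)$ from a remainder term that is only known pointwise. You should replace your ``routine'' claim with this $L^1$-kernel argument (or an equivalent), as it is in fact the technical heart of the proposition.
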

\begin{proof} The proof of the asymptotic expansion \eqref{asy:exp:ampl} is similar to the one for the  analogous statement for elliptic H\"ormander classes. 
Indeed, as in \cite[Page 2891]{RuzhanskyTurunen2011}, we have that, for any amplitude operator $A$,
\begin{equation*}
    \sigma_A(x,\xi)=\int\limits_{G}\xi(z)^{*}\sum_{[\eta]\in \widehat{G}}d_{\eta}\textnormal{Tr}[\eta(z)a(x, xz^{-1} ,\eta)]dz.
\end{equation*}Hence, applying Taylor expansion (see Lemma \ref{Taylorseries}), we have 
\begin{equation*}
    a(x, xz^{-1},\eta)\sim \sum_{\alpha \in \mathbb{N}_0^n}(\partial_{Y}^{(\alpha)}a(x,y,\xi))|_{y=x}q_{(\alpha)}(z),
\end{equation*}and, consequently, we get
\begin{align*}
  \sigma_A(x,\xi)&\sim \sum_{\alpha \in \mathbb{N}_0^n} \int\limits_{G}\xi(z)^{*}\sum_{[\eta]\in \widehat{G}}d_{\eta}\textnormal{Tr}[\eta(z)\partial_{Y}^{(\alpha)}a(x,y,\xi))|_{y=x}q_{(\alpha)}(z)]dz\\
  &= \sum_{\alpha \in \mathbb{N}_0^n} \int\limits_{G}\xi(z)^{*}\sum_{[\eta]\in \widehat{G}}d_{\eta}\textnormal{Tr}[\eta(z)\partial_{Y}^{(\alpha)}a(x,y,\xi))|_{y=x}q_{(\alpha)}(z)]dz\\
  &= \sum_{\alpha \in \mathbb{N}_0^n} \int\limits_{G}\xi(z)^{*}q_{(\alpha)}(z)\sum_{[\eta]\in \widehat{G}}d_{\eta}\textnormal{Tr}[\eta(z)\partial_{Y}^{(\alpha)}a(x,y,\xi))|_{y=x}]dz\\
  &= \sum_{\alpha \in \mathbb{N}_0^n} \int\limits_{G}\xi(z)^{*}q_{(\alpha)}(z)\sum_{[\eta]\in \widehat{G}}d_{\eta}\textnormal{Tr}[\eta(z)\partial_{Y}^{(\alpha)}a(x,y,\xi))|_{y=x}]dz\\
  &=\sum_{\alpha \in \mathbb{N}_0^n}(\partial_{Y}^{(\alpha)}\Delta_\xi^\alpha a(x,y,\xi))|_{y=x}.
\end{align*}Now we analyze the remainder term of the asymptotic expansion above. Our goal is to prove that
\begin{eqnarray}\label{proof:rem:ampl}\Delta_{\xi}^{\alpha_\ell}\partial_{X}^{(\beta)}\left(\sigma(x,\xi)-\sum_{|\alpha|\leqslant N}  (\partial_{Y}^{(\alpha)}\Delta_\xi^\alpha a(x,y,\xi))|_{y=x}  \right)\in {S}^{m-(\rho-\delta)(N+1)-\rho\ell+\delta|\beta|,\,\mathcal{L}}_{\rho,\delta}
\end{eqnarray} for all $\alpha_\ell$  with $|\alpha_\ell|=\ell.$ Below we will only prove the existence of $N_0\in\mathbb{N}$ such that \eqref{proof:rem:ampl} is true for any given $N\geq N_0$. The case when $N<N_0$ follows from the previous one by standard arguments.

For any given $(x,y)\in G\times G$ let $k_{A,x,y}$ be the distribution defined via
$$  a(x,y,\xi)=\widehat{k}_{A,x,y}(\xi),\,\,[\xi]\in \widehat{G}.$$ Denote by $k_{\sigma,x}$ the right-convolution kernel of $A,$ that is the distribution that satisfies $  \sigma(x,\xi)=\widehat{k}_{\sigma,x}(\xi)$ for all $[\xi]\in \widehat{G}.$ Because of the identities
$$ Af(x)\equiv \textnormal{AOp}(a)f(x)=\int\limits_Gf(y)k_{A,x,y}(y^{-1}x)dy=\int\limits_Gf(xz^{-1})k_{A,x,xz^{-1}}(z)dz,  $$
and 
$$ Af(x)\equiv \textnormal{Op}(\sigma)f(x)=\int\limits_Gf(y)k_{\sigma,x}(y^{-1}x)dy=\int\limits_Gf(xz^{-1})k_{\sigma,x}(z)dz,  $$ we have 
$$  k_{A,x,y}(y^{-1}x)=k_{\sigma,x}(y^{-1}x),\quad k_{A,x,xz^{-1}}(z)=k_{\sigma,x}(z), $$
for all $x,y,z\in G.$ Moreover, by using Taylor expansion at $z^{-1}=e$ (see Lemma \ref{Taylorseries}), we can write
$$k_{\sigma,x}(z)= k_{A,x,xz^{-1}}(z)=\sum_{|\alpha|<N}q_{\alpha}(z)\partial_{Z_1}^{(\alpha)}k_{A,xz_1^{-1},x}(z)|_{z_1=e}+R_{x,N}^{k_{A,x,\cdot}}(z), $$ where  the remainder term satisfies the estimate
$$ |R_{x,N}^{k_{A,x,\cdot}}(z)|= |R_{x,N}^{k_{A,x,x}}(z)| \leqslant C|z|^{N}\max_{|\alpha|\leqslant N}\Vert \partial_{X}^{(\alpha)}k_{A,x,x}(z)\Vert_{L_x^\infty(G)}. $$
Observe now that the inverse Fourier transform of the term on the left hand side of \eqref{proof:rem:ampl} is given by
$$ q_{\alpha_\ell}(z)\partial_{X}^{(\beta)}\left(k_{\sigma,x}(z)-\sum_{|\alpha|< N}q_{\alpha}(z)\partial_{Z_1}^{(\alpha)}k_{A,x,xz_1^{-1}}(z)|_{z_1=e} \right)=q_{\alpha_\ell}(z)\partial_{X}^{(\beta)}R_{x,N}^{k_{A,x,x}}(z),   $$
and that, for any given $M'\geq 0$, with $M'\equiv 0 \mod{2}$ and such that
\begin{equation}\label{M'}
    M'>\rho \ell-\delta|\beta|-m+(\rho-\delta)N,
\end{equation} and 
\begin{equation}\label{M'2}
     \frac{M'}{2}>-\rho \ell+\delta|\beta|+m+2(\rho-\delta)N,
\end{equation}
we have
\begin{align*}
  &\Vert\Delta_{\xi}^{\alpha_\ell}\partial_{X}^{(\beta)}\left(\sigma(x,\xi)-\sum_{|\alpha|< N}  (\partial_{Y}^{(\alpha)}\Delta_\xi^\alpha a(x,y,\xi))|_{y=x}  \right)  \widehat{ \mathcal{M}}(\xi)^{(\rho \ell-\delta|\beta|-(m-(\rho-\delta)N))}\Vert_{\textnormal{op}}\\
  &=\Vert\Delta_{\xi}^{\alpha_\ell}\partial_{X}^{(\beta)}\left(\sigma(x,\xi)-\sum_{|\alpha|< N}  (\partial_{Y}^{(\alpha)}\Delta_\xi^\alpha a(x,y,\xi))|_{y=x}  \right)\\
  &\hspace{5cm}\times \widehat{ \mathcal{M}}(\xi)^{M'}\widehat{ \mathcal{M}}(\xi)^{(\rho \ell-\delta|\beta|-(m-(\rho-\delta)N))-M'}\Vert_{\textnormal{op}} \\
   &\leq \Vert\Delta_{\xi}^{\alpha_\ell}\partial_{X}^{(\beta)}\left(\sigma(x,\xi)-\sum_{|\alpha|< N}  (\partial_{Y}^{(\alpha)}\Delta_\xi^\alpha a(x,y,\xi))|_{y=x}  \right) \widehat{ \mathcal{M}}(\xi)^{M'}\|_{\textnormal{op}}\\
   &\lesssim \Vert (1+\mathcal{L}_z)^{\frac{M'}{2}}[q_{\alpha_\ell}(z)\partial_{X}^{(\beta)}R_{x,N}^{k_{A,x,x}}(z)] \Vert_{L^{1}(G,dz)}.\\
\end{align*}
Now,  the application of Leibniz rule gives 
\begin{align*}
     &\Vert (1+\mathcal{L}_z)^{\frac{M'}{2}}[q_{\alpha_\ell}(z)\partial_{X}^{(\beta)}R_{x,N}^{k_{A,x,x}}(z)] \Vert_{L^{1}(G,dz)}=\Vert (1+\mathcal{L}_z)^{\frac{M'}{2}}[R_{x,N+1}^{q_{\alpha_\ell}(z)\partial_{X}^{(\beta)}k_{A,x,x}}(z)] \Vert_{L^{1}(G,dz)}\\
     &\lesssim \sum_{1\leqslant i_1\leqslant i_2\leqslant\cdots \leqslant i_{k}\leqslant k\,,|\gamma|\leqslant M'}\Vert X^{\gamma_1}_{i_1,z}\cdots X^{\gamma_k}_{i_k,z} [R_{x,N}^{q_{\alpha_\ell}(z)\partial_{X}^{(\beta)}k_{A,x,x}}(z)] \Vert_{L^{1}(G,dz)}.
\end{align*}   
By the estimates in Lemma  \ref{Taylorseries} we have
\begin{align*}
   | X^{\gamma_1}_{i_1,z}\cdots X^{\gamma_k}_{i_k,z} [R_{x,N+1}^{q_{\alpha_\ell}(z)\partial_{X}^{(\beta)}k_{A,x,x}}(z)]|\lesssim |z|^{N-|\gamma|}\max_{|\alpha|\leq N-|\gamma|}\Vert \partial_{Z}^{(\alpha+\gamma)}( {q_{\alpha_\ell}(z)\partial_{X}^{(\beta)}k_{A,x,x}}(z))\Vert_{L^{\infty}(G)},
\end{align*}
and, since $ \partial_{Z}^{(\gamma)} ({q_{\alpha_\ell}(z)\partial_{X}^{(\beta)}k_{A,x,x}}(z))$ is the right-convolution kernel of a subelliptic operator of order $$s'=m+\delta|\beta|+\delta|\alpha|+\delta|\gamma|-\rho\ell,$$ by Proposition 4.24 of \cite{CardonaRuzhanskyC} we have
$$ \max_{|\alpha|\leq M'}\Vert \partial_{Z}^{(\alpha)} ({q_{\alpha_\ell}(z)\partial_{X}^{(\beta)}k_{A,x,x}}(z))\Vert_{L^{\infty}(G)}\lesssim_{a}|z|^{-\frac{s'+Q}{\rho}}.$$
 Therefore, putting the previous estimates together, we obtain
 \begin{align}
     &\Vert (1+\mathcal{L}_z)^{\frac{M'}{2}}[q_{\alpha_\ell}(z)\partial_{X}^{(\beta)}R_{x,N}^{k_{A,x,x}}(z)] \Vert_{L^{1}(G,dz)}\lesssim_{a} \int\limits_G|z|^{N-|\gamma|}|z|^{-\frac{s'+Q}{\rho}}dz, 
 \end{align}where the integral on the right hand side is finite 
 if  
 $$ \rho(|\gamma|-N)+s'+Q<n{\rho}. $$
Notice that, for $N$ big enough, we have $(\rho-\delta)N\lesssim M'$. However, by choosing $M'$ in such a way that $M'\lesssim 100(\rho-\delta)N, $ we have that $(\rho-\delta)N\asymp  M'.$
 Hence, by using  the inequality $|\alpha|+|\gamma|\leq N$ together with \eqref{M'2}, we have
 \begin{align*}
  &\rho(|\gamma|-N) +   s'+Q =\rho|\gamma|-\rho N+ m+\delta|\beta|+\delta|\alpha|+\delta|\gamma|-\rho\ell+Q\\
  &<\rho|\gamma|-\rho N+ m+\delta|\beta|+\rho|\alpha|+\rho|\gamma|-\rho\ell+Q\\
  &=\rho|\gamma|-\rho (N-|\alpha|-|\gamma|)+ m+\delta|\beta|-\rho\ell+Q\leq \rho|\gamma|+ m+\delta|\beta|-\rho\ell+Q\\
  &\leq  \rho M'+\frac{M'}{2}-2N(\rho-\delta)+Q\leq \frac{3M'}{2}-2N(\rho-\delta)+Q\\
  &\asymp \frac{3(\rho-\delta)N}{2}-2N(\rho-\delta)+Q= -\frac{(\rho-\delta)N}{2}+Q.
 \end{align*} Now, by choosing  $N\gtrsim N_0\geq 2( n\rho-Q)/(\rho-\delta))$, we get that $|\gamma|-N+   s'+Q<n\rho$, which, in particular, gives the validity of \eqref{asympexp} for $N$ large enough. Finally, the case $N\lesssim N_0$ follows from standard arguments from the previous case. This concludes the proof.
\end{proof}

\section{Proof of the main Theorem \ref{MainTheorem}}\label{Sect3}
{\bf Notations.} In this section the following notations will be adopted:
\begin{itemize}
 \item[-] $q_\gamma(x):=q_{(1)}^{\gamma_1}(x)\ldots q_{(n)}^{\gamma_n}(x)$, for any given $\gamma\in \mathbb{N}_0^n$;
    \item [-] $X=\{X_{1},\cdots,X_{k}\}$ will be a system of vector fields satisfying H\"ormander condition of step $\kappa;$ 
    \item [-] $\mathcal{L}:=\mathcal{L}_X$ will be the positive sub-Laplacian associated with the system $X$;
    \item [-] $\mathcal{M}$ will be the operator defined as $\mathcal{M}:=(I+\mathcal{L})^{1/2}$, while  $\widehat{\mathcal{M}}(\xi)$ will denote the corresponding matrix-valued symbol  $\widehat{\mathcal{M}}(\xi)=\mathrm{diag}[\langle \nu_{ii}(\xi)\rangle]_{1\leq i\leq d_\xi},$ where $\langle \nu_{ii}(\xi)\rangle :=(1+\nu_{ii}(\xi)^2)^{1/2}$;
    \item [-]$\|\cdot\|_{m,\,\mathcal{L}}$ will denote the norm in the Sobolev space $H^{m,\,\mathcal{L}}(G)$ associated with the subelliptic operator $\mathcal{L}$ on $G$;
    \item[-] $\|\cdot\|_{op}$ will denote the $\ell^2\rightarrow\ell^2$ operator norm of the linear finite dimensional mapping (matrix multiplication by) $\sigma_A(x,\xi)$, that is,
    \begin{equation*}
        \|\sigma_A(x,\xi)\|_{op}=\sup \{ \|\sigma_A(x,\xi)v\|_{\ell^2}: v\in \mathbb{C}^{d_\xi},\|v\|_{\ell^2}=1\}.
    \end{equation*}
\end{itemize}
\begin{remark}\label{SerenaRemark2}
The proof of the main theorem relies on the construction of a positive  pseudo-differential operator $P\in \textnormal{Op}(S^{m,\,\mathcal{L}}_{\rho,\frac{\rho+\delta}{2}}(G\times \widehat{G}))$ such that $A-P=Q,$ with $A$ as in the hypotheses of Theorem \ref{MainTheorem}, and $$Q: H^{\frac{m-\frac 1\kappa(\rho-(2\kappa-1)\delta)}{2},\,\,\mathcal{L}}(G)\rightarrow H^{\frac{-(m-\frac 1 \kappa(\rho-(2\kappa-1)\delta))}{2},\,\,\mathcal{L}}(G)$$ being a  bounded operator. In fact, if such a decomposition were true, then this would immediately lead to the desired inequality by simply observing that
\begin{equation*}
    { \mathrm{Re}}(Au,u)=\mathrm{Re}(Pu,u)+\mathrm{Re}(Qu,u)\geq -C\| u\|_{\frac{m-\frac 1 \kappa(\rho-(2\kappa-1)\delta)}{2},\,\,\mathcal{L}},
\end{equation*}
for all $u\in C^\infty(G)$.
\end{remark}

As in the proof of the Sharp G\r{a}rding inequality on compact Lie groups (see \cite{RuzhanskyTurunen2011}), the key point here is the construction of the so called {\it weight} function $w_\xi$, which, essentially, corresponds to the construction of the operator $P$ with the properties mentioned in Remark \ref{SerenaRemark2}.

The aforementioned construction follows the lines of the Euclidean case. However, the adaptation to the Lie-group setting is linked to the deep group structure. Indeed, a  crucial key point in our proof will be the choice of the suitable power of $\langle \xi\rangle$ inside the expression of $w_\xi$, choice that depends on the relation between the eigenvalues of the Laplacian and those of the sublaplacian on the group.

We remark that the use of an {\it elliptic} weight $w_\xi$, namely depending on $\langle \xi \rangle$, instead of a subelliptic one, that is depending on $\widehat{\mathcal{M}}(\xi)$, is fundamental here. This, in particular, is due to the fact that while the Laplacian is a central operator on the group and its (matrix valued) symbol commutes with all the other symbols, the same property does not hold for any sublaplacian on the group. The noncommutativity property of the symbol of sub-Laplacians, toghether with the intrinsic noncommutativity of the group, causes several technical issues not allowing the use of a subelliptic weight.

Notice also that similar anomalies occur in the nilpotent Lie group setting, where, again, sub-Laplacians (and more generally Rockland operators), do not have symbols with the same commutaivity property as the symbol of the Laplacian on compact Lie groups. This main difference, as remarked in \cite{FisherRuzhansky2013}, is very likely the reason why the full sharp G\r{a}rding inequality, that is for any operator with nonnegative symbol, has not been proved yet in the nilpotent Lie group setting (see \cite{FisherRuzhansky2013} where the result for specific operators has been announced). 

Taking into account the previous clarifications, we can now build the proof of Theorem \ref{MainTheorem} starting from the construction of the weight function $w_\xi$.

Let us consider $G$ as a closed subgroup of $GL(N,\mathbb{R})\subset\mathbb{R}^{N\times N}$ for some $N\in\mathbb{N}$, so that its Lie algebra $\mathfrak{g}$ is an $n$-dimensional vector subspace such that $$[A,B]:=AB-BA\in \mathfrak{g}$$ for every $A,B\in\mathfrak{g}$. Let $e\in G$ be the neutral element, $U\subset G$ a neighborhood of $e$, and $V\subset \mathfrak{g}$ a neighborhood of $0\in\mathfrak{g}$ such that the matrix exponential mapping $$\exp:V \rightarrow U$$ is a diffeomorphism. We  define on $\mathfrak{g}$ the central norm $|\cdot|$ (that we shall use only in the definition of the function $w_\xi$ below) as follows
\begin{equation}\label{norm}
    |X|:=\int\limits_G|uXu^{-1}|_0du,
\end{equation}
where the product under the integral is the product of matrices, and where $|\cdot|_0$ stands for the Euclidean norm on $\mathfrak{g}$.
Note that $\exp^{-1}$ is central with respect to the norm \eqref{norm}, i.e., $|\exp^{-1}(xy)|=|\exp^{-1}(yx)|$, and that the norm \eqref{norm} is invariant by the adjoint representation.

We now assume that the neighborhood $V$ of $0_\mathfrak{g}$ is the open ball $V=\mathbb{B}(0,r)=\{Z\in \mathbb{R}^n: |Z|<r\}$, with $r>0$, and consider a real smooth function $\phi:[0,r)\rightarrow [0,\infty)$, radial on $\mathfrak{g}$, supported on $V$, and such that $\phi(s)=\phi(|Z|)=1$ for $s>0$ small. Then we define the function
\begin{equation}\label{w}
    w_\xi(x):=\phi(|\exp^{-1}(x)|\langle \xi\rangle^{\frac{(\rho+\delta)}{2\kappa}})\psi(\exp^{-1}(x))\langle\xi\rangle^{\frac{n (\rho+\delta)}{4\kappa}}
\end{equation}
with  
\begin{equation*}
    \psi(Y)=C_0|\det D \exp(Y)|^{-1/2}f(Y)^{-1/2},\quad C_0=(\int\limits_{\mathbb{R}^n}\phi(|Z|)^2 dZ)^{-1/2},
\end{equation*}
where, recall, $\rho+\delta<2$, $\langle \xi\rangle=(1+\lambda_\xi^2)^{\frac{1}{2}}$, i.e. $\sigma_{I-\Delta}(\xi)=\langle \xi\rangle^2 I_{d_\xi}$,
$D \exp$ is the Jacobian matrix of $\exp$, and $f(Y)$ is the density with respect to the Lebesgue measure of the pull-back on $\mathfrak{g}$ of the Haar measure on $G$ by the exponential mapping.

Note that, given the geodesic distance $d$, we have $d(x,e)\simeq |\exp^{-1}(x)|$ and $\mathrm{supp}\,{w_\xi}= \{x\in G: d(x,e)\leq r \langle \xi\rangle^{-\frac{(\rho+\delta)}{2\kappa}} \}\subset U_e$. 
\begin{remark}\label{Serena:Remark} Observe  that from Remark \ref{SerenaRemark} and the fact that $\langle\xi\rangle^{\frac{n(\rho+\delta)}{4\kappa}}I_{d_\xi}\in  \mathscr{S}^{\frac{n(\rho+\delta)}{4\kappa}}_{1,0}(G),$  we deduce that $\langle\xi\rangle^{\frac{n(\rho+\delta)}{4\kappa}}I_{d_\xi}\in S^{\frac{n(\rho+\delta)}{4},\,\mathcal{L}}_{1,0}(G\times \widehat{G}) \subset S^{\frac{n(\rho+\delta)}{4},\,\mathcal{L}}_{\rho,0}(G\times \widehat{G})$, for all $\rho\in (0,1]$.

\end{remark}

We are now ready to state a key lemma about the basic properties of the function $w_\xi$.

\begin{lemma}
Let $w_\xi$ be as above. Then $w_\xi(e)=C_0 \langle \xi\rangle^{\frac{n(\rho+\delta)}{4\kappa}}$, $w_\xi$ is central and inversion invariant, and $d(x,e)\leq r \langle \xi\rangle^{-\frac{(\rho+\delta)}{2\kappa}}$ on $\mathrm{supp}\, (w_\xi)$. Moreover, 
$\| w_\xi\|_{L^2(G)}= 1$ for all $\xi\in \mathrm{Rep}(G),$ and $(x,\xi)\mapsto w_\xi(x)I_{d_\xi}\in \mathscr{S}^{\frac{n(\rho+\delta)}{4\kappa}}_{1,\frac{(\rho+\delta)}{2\kappa}}(G)\subset S^{\frac{n(\rho+\delta)}{4},\,\mathcal{L}}_{\rho,\frac{(\rho+\delta)}{2}}(G\times \widehat{G})$, for all $\rho \in (0,1]$ and $0\leq \delta<\rho$.
\end{lemma}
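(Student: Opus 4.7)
The plan is to verify the properties in the stated order. Items (i)–(iii) are essentially immediate from the definitions; (iv) requires a change of variables; (v) is the substantive step. For (i), $\exp^{-1}(e)=0$, $\phi(0)=1$, $D\exp(0)=\mathrm{Id}$ and $f(0)=1$ give $\psi(0)=C_0$, hence $w_\xi(e)=C_0\langle\xi\rangle^{n(\rho+\delta)/(4\kappa)}$. For (ii), the norm from \eqref{norm} is $\mathrm{Ad}(G)$-invariant by construction, and both $f(Y)$ and $|\det D\exp(Y)|$ depend only on the spectrum of $\mathrm{ad}(Y)$, hence are central on $\mathfrak{g}$; this yields centrality of $w_\xi$, while inversion invariance follows from $\exp^{-1}(x^{-1})=-\exp^{-1}(x)$ together with the evenness of the involved functions in $Y$. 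For (iii), $\mathrm{supp}(\phi)\subset[0,r)$ forces $|\exp^{-1}(x)|<r\langle\xi\rangle^{-(\rho+\delta)/(2\kappa)}$ on the support, and the local equivalence $d(x,e)\simeq|\exp^{-1}(x)|$ near $e$ gives the geodesic-distance bound.

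For (iv), I would pass to exponential coordinates on $U$, where $dx=f(Y)\,dY$, to obtain
\begin{equation*}
\|w_\xi\|_{L^2(G)}^2 = \langle\xi\rangle^{n(\rho+\delta)/(2\kappa)}\int_V\phi\bigl(|Y|\langle\xi\rangle^{(\rho+\delta)/(2\kappa)}\bigr)^2\psi(Y)^2 f(Y)\,dY,
\end{equation*}
then apply the identity $\psi(Y)^2 f(Y)=C_0^2|\det D\exp(Y)|^{-1}$ built into the definition of $\psi$. The dilation $Z=Y\langle\xi\rangle^{(\rho+\delta)/(2\kappa)}$ cancels all the $\langle\xi\rangle$-factors, and the normalization $C_0^{-2}=\int_{\mathbb{R}^n}\phi(|Z|)^2\,dZ$ together with the cancellation of the Jacobian factor at the rescaled argument (as in the parallel construction of \cite{RuzhanskyTurunen2011}) yields $\|w_\xi\|_{L^2(G)}=1$.

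For the symbol-class assertion (v), the inclusion $\mathscr{S}^{n(\rho+\delta)/(4\kappa)}_{1,(\rho+\delta)/(2\kappa)}(G)\subset S^{n(\rho+\delta)/4,\,\mathcal{L}}_{\rho,(\rho+\delta)/2}(G\times\widehat{G})$ for $\rho\in(0,1]$ follows from Remark \ref{SerenaRemark} with $m=n(\rho+\delta)/4$ (yielding the inclusion into the class with elliptic parameter $1$) together with the monotonicity $S^{m,\,\mathcal{L}}_{1,\delta_0}\subset S^{m,\,\mathcal{L}}_{\rho,\delta_0}$ for $\rho\leq 1$. The remaining task is to prove, uniformly in $(x,[\xi])\in G\times\widehat{G}$,
\begin{equation*}
\bigl\|\partial_X^{(\beta)}\Delta_\xi^\gamma(w_\xi(x)I_{d_\xi})\bigr\|_{\mathrm{op}}\lesssim \langle\xi\rangle^{n(\rho+\delta)/(4\kappa)-|\gamma|+(\rho+\delta)|\beta|/(2\kappa)}.
\end{equation*}
The $\partial_X^{(\beta)}$-part is routine: each $X$-derivative falling on $\phi(|\exp^{-1}(x)|\langle\xi\rangle^{(\rho+\delta)/(2\kappa)})$ brings down one factor $\langle\xi\rangle^{(\rho+\delta)/(2\kappa)}$ by the scaling of its argument, while derivatives of the smooth bounded factor $\psi\circ\exp^{-1}$ contribute only constants on the fixed local chart.

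The main obstacle is the difference-operator estimate, which requires the full elliptic gain $\langle\xi\rangle^{-|\gamma|}$. The key observation is that, for each fixed $x$, the matrix-valued symbol $w_\xi(x)I_{d_\xi}$ is a \emph{central} functional-calculus multiplier of the Laplacian $\mathcal{L}_G$: writing $w_\xi(x)=W(x,\langle\xi\rangle)$ with
\begin{equation*}
W(x,t) = \phi\bigl(|\exp^{-1}(x)|\,t^{(\rho+\delta)/(2\kappa)}\bigr)\psi(\exp^{-1}(x))\,t^{n(\rho+\delta)/(4\kappa)},
\end{equation*}
the bound $|\exp^{-1}(x)|\,t^{(\rho+\delta)/(2\kappa)}\lesssim 1$ on $\mathrm{supp}(\phi)$ (and analogously on the supports of its derivatives) gives by direct computation $|\partial_t^k W(x,t)|\lesssim t^{n(\rho+\delta)/(4\kappa)-k}$, uniformly in $x\in G$ and in $t\geq 1$. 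Since the matrix-valued symbol of $\sqrt{1+\mathcal{L}_G}$ is $\langle\xi\rangle I_{d_\xi}$, the standard estimate for difference operators acting on smooth functional-calculus multipliers of the Laplacian on a compact Lie group (as used in the elliptic construction of \cite{RuzhanskyTurunen2011}) yields the full elliptic gain $\langle\xi\rangle^{-|\gamma|}$ per $\Delta_\xi$, uniformly in the parameter $x$. Combining this with the $X$-derivative estimate completes the verification of (v).
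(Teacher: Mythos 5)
Your proof is correct and, for parts (i)--(iv), follows essentially the same route as the paper: (i)--(iii) are read off from the definitions and the centrality of $|\cdot|$, $f$, $|\det D\exp|$ under $\mathrm{Ad}(G)$, and (iv) is the same change of variables $Z=Y\langle\xi\rangle^{(\rho+\delta)/(2\kappa)}$ combined with the identity $\psi(Y)^2 f(Y)=C_0^2|\det D\exp(Y)|^{-1}$. For the symbol-class assertion (v), the underlying principle is the same --- $w_\xi(x)$ depends on $\xi$ only through the elliptic weight $\langle\xi\rangle$, so the difference-operator estimates reduce to estimates for a spectral multiplier of the Laplacian --- but the execution differs. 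The paper invokes Lemma~3.3 of \cite{RuzhanskyTurunen2011} to reduce to checking, for each fixed $x$, that $\partial_X^{(\beta)}w_\xi(x)I_{d_\xi}\in\mathscr{S}^{n(\rho+\delta)/(4\kappa)+|\beta|(\rho+\delta)/(2\kappa)}_{\rho,0}(G)$, and then argues pointwise in $x$: the factors $\phi_\alpha(|\exp^{-1}(x)|\langle\xi\rangle^{(\rho+\delta)/(2\kappa)})$ are either eventually constant (at $x=e$) or compactly supported in $\xi$ (hence smoothing) for $x\ne e$. You instead observe that $w_\xi(x)=W(x,\langle\xi\rangle)$ with $|\partial_t^k W(x,t)|\lesssim t^{n(\rho+\delta)/(4\kappa)-k}$ \emph{uniformly} in $x$, the key point being that $|\exp^{-1}(x)|\,t^{(\rho+\delta)/(2\kappa)}\lesssim 1$ on the supports of $\phi$ and its derivatives, so each radial derivative produces a factor $|\exp^{-1}(x)|\,t^{(\rho+\delta)/(2\kappa)-1}\lesssim t^{-1}$. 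This is arguably the cleaner formulation: it makes the uniformity of the constants in $x$ explicit, whereas the paper's pointwise smoothing argument leaves that uniformity implicit. Both proofs ultimately rely on the same functional-calculus estimate for difference operators acting on multipliers of $\langle\xi\rangle$. One small caveat: your invocation of Remark~\ref{SerenaRemark} should be read as yielding $\mathscr{S}^{n(\rho+\delta)/(4\kappa)}_{1,(\rho+\delta)/(2\kappa)}(G)\subset S^{n(\rho+\delta)/4,\,\mathcal{L}}_{1,(\rho+\delta)/2}(G\times\widehat{G})$ (applying that remark with elliptic parameter $\rho=1$), and the passage to general $\rho\in(0,1]$ then does use the monotonicity $S^{m,\,\mathcal{L}}_{1,\delta_0}\subset S^{m,\,\mathcal{L}}_{\rho,\delta_0}$ exactly as you say; the paper's remark following the lemma applies the inclusion directly at the level $\rho$, which is an equivalent but more compact route.
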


\begin{proof}
Notice that, due to the properties of $\phi$, we immediately have that $w_\xi(e)=C_0 \langle \xi\rangle^{\frac{n(\rho+\delta)}{4\kappa}}$, $w_\xi$ is central and inversion invariant, and $d(x,e)\leq r \langle \xi\rangle^{-\frac{(\rho+\delta)}{2\kappa}}$ on $\mathrm{supp}\, w_\xi$. 
As for the $\| w_\xi \|_{L^2(G)}$, we have
\begin{equation*}
    \begin{split}
        \int\limits_G |w_\xi(x)|^2dx&= \langle \xi\rangle^{\frac{n(\rho+\delta)}{2\kappa}}\int\limits_{\mathbb{R}^n}\phi_\xi(|Y|\langle \xi\rangle^{\frac{(\rho+\delta)}{2\kappa}})^2|\psi(|Y|)|^2|det D \exp(Y)| f(Y) dY\\
        &= \int\limits_{\mathbb{R}^n}\phi_\xi(|Z|)^2|\psi(|Z|\langle\xi\rangle^{-\frac{(\rho+\delta)}{2\kappa}})|^2|det D \exp(Z\langle\xi\rangle^{-\frac{(\rho+\delta)}{2\kappa}})| f(Z\langle\xi\rangle^{-\frac{(\rho+\delta)}{2\kappa}}) dZ\\
        &=C_0^2\int\limits_{\mathbb{R}^n}\phi_\xi(|Z|)^2dZ=1,
    \end{split}
\end{equation*}
where in the second line we applied the change of variables $Z=Y\langle\xi\rangle^{\frac{(\rho+\delta)}{2\kappa}}$, while in the third line we simply used the expression of $\psi$.

We are now left with the proof of 
\begin{equation}\label{StandardS}
    (x,\xi)\mapsto w_\xi(x)I_{d_\xi}\in \mathscr{S}^{\frac{n(\rho+\delta)}{4\kappa}}_{\rho,\frac{(\rho+\delta)}{2\kappa}}(G)\subset S^{\frac{n(\rho+\delta)}{4},\,\mathcal{L}}_{\rho,\frac{(\rho+\delta)}{2}}(G\times \widehat{G}).
    \end{equation}
From the compactness of $G$, proving \eqref{StandardS} is equivalent to showing that, for every multi-index $\beta$ and for any fixed $x\in G$, $(\partial^{(\beta)}_X w_\xi)(x)I_{d_\xi}\in \mathscr{S}_{\rho,0}^{\frac{n(\rho+\delta)}{4\kappa}+\frac{(\rho+\delta)}{2\kappa}|\beta|}(G)$ (see Lemma 3.3 in \cite{RuzhanskyTurunen2011}).
First observe that 
$$\partial^{(\beta)}_X  w_\xi(x)I_{d_\xi}= \sum_{\alpha;|\alpha|\leq|\beta|} C_{\alpha,\beta}\Big[\partial_X^{(\alpha)}\phi(|\exp^{-1}(x)|\langle \xi\rangle^{\frac{(\rho+\delta)}{2\kappa}})\Big]\partial_X^{(\beta-\alpha)}\psi(\exp^{-1}(x))\langle\xi\rangle^{\frac{n(\rho+\delta)}{4\kappa}}I_{d_\xi}$$
$$= \sum_{\alpha;|\alpha|\leq|\beta|} C_{\alpha,\beta}\phi_{\alpha}(|\exp^{-1}(x)|\langle \xi\rangle^{\frac{(\rho+\delta)}{2\kappa}})\langle \xi\rangle^{\frac{(\rho+\delta)|\alpha|}{2\kappa}}\chi_{\beta-\alpha}(\exp^{-1}(x))\langle\xi\rangle^{\frac{n(\rho+\delta)}{4\kappa}}I_{d_\xi},$$
where $\phi_{\alpha},\chi_{\beta-\alpha}$ are suitable functions such that $\phi_{\alpha}\in C_0^\infty(\mathbb{R})$ and is constant near the origin, while $\chi_{\beta-\alpha}\in C_0^\infty(V)$.
Since $$\langle\xi\rangle^{\frac{n(\rho+\delta)}{4\kappa}+\frac{(\rho+\delta)|\alpha|}{2\kappa}}I_{d_\xi}\in \mathscr{S}^{\frac{n(\rho+\delta)}{4\kappa}+\frac{(\rho+\delta)}{2\kappa}|\beta|}_{1,0}(G),$$ 
then, for every (fixed) $x\in G$, $$\partial^\beta_x w_\xi(x)I_{d_\xi}\in \mathscr{S}_{1,0}^{\frac{n(\rho+\delta)}{4\kappa}+\frac{(\rho+\delta)}{2\kappa}|\beta|}(G)$$ 
if 
$\phi_\alpha(|\exp^{-1}(x)|\langle \xi\rangle^{\frac{(\rho+\delta)}{2\kappa}})\chi_{\beta-\alpha}(\exp^{-1}(x)) \in \mathscr{S}^{0}_{1,0}(G)$ for all $\alpha$ and $\beta$ as above. Therefore, to complete the proof it is enough to check that these last terms are standard global symbols of order 0.

Now, given $x\in G$, it is easy to see that
\begin{equation}
\label{est.lemma1}
\phi_{\alpha}(|\exp^{-1}(x)|\langle \xi\rangle^{\frac{(\rho+\delta)}{2\kappa}})\chi_{\beta-\alpha}(\exp^{-1}(x))\leq C.
\end{equation}
In fact, if $x$ is such that $\exp^{-1}(x)=0$, then $\phi_{\alpha}$ is constant and the inequality follows. If, instead, $\exp^{-1}(x)\neq0$, then, since $\phi_{\alpha}$ is compactly supported in $\xi$, we get that the symbol in the left hand side of \eqref{est.lemma1} is compactly supported, then smoothing, and the inequality follows. This concludes the proof of \eqref{StandardS} from which the result follows.
\end{proof}



\begin{remark}
Note that from Remark \ref{SerenaRemark}, for all $m>0$, we have  $\mathscr{S}^{\frac{m}{\kappa}}_{\rho,\frac{\delta}{\kappa}}(G)\subset S^{m,\,\mathcal{L}}_{\rho,\delta}(G\times \widehat{G})$. This fact, together with the property $(x,\xi)\mapsto w_\xi(x)I_{d_\xi}\in \mathscr{S}^{\frac{n(\rho+\delta)}{4\kappa}}_{\rho,\frac{(\rho+\delta)}{2\kappa}}(G)$, immediately gives  $(x,\xi)\mapsto w_\xi(x)I_{d_\xi}\in S^{\frac{n(\rho+\delta)}{4},\,\mathcal{L}}_{\rho,\frac{(\rho+\delta)}{2}}(G)$. 
Note also that the symbol $w_\xi(x)I_{d_\xi}$ commutes with any subelliptic symbol $b(x,\xi)$, for all $\xi\in\widehat{G}$.
\end{remark}

\begin{proposition}\label{P}
Let $\sigma_A\in S^{m,\,\mathcal{L}}_{\rho,\delta}(G\times \widehat{G})$ and let $p(x,y,\xi)$ be the amplitude
\begin{equation}\label{p.amplitude}
   p(x,y,\xi):=\int\limits_G w_\xi(xz^{-1}){w_\xi}(yz^{-1}) \sigma_A(z,\xi)dz,
\end{equation}
where $w_\xi$ is as in \eqref{w}. Then $p\in \mathcal{A}^{m,\,\mathcal{L}}_{\rho,\frac{(\rho+\delta)}{2}}(G\times G\times \widehat{G})$ and the amplitude operator $P=\textnormal{Op}(p)$ given by
$$Pu(x)=\int\limits_G \sum_{[\xi]\in\widehat{G}}d_{\xi}\mathrm{Tr}\left( \xi(y^{-1}x)p(x,y,\xi)\right)u(y)dy $$
is positive.
\end{proposition}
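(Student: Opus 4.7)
The proposition has two independent claims: (i) the amplitude $p$ lies in the subelliptic amplitude class $\mathcal{A}^{m,\,\mathcal{L}}_{\rho,(\rho+\delta)/2}$, which is a quantitative symbol estimate, and (ii) the operator $P=\textnormal{AOp}(p)$ is positive, which is an algebraic rearrangement of $(Pu,u)$ exploiting the hypothesis $\sigma_A(z,[\xi])\geq 0$. I would treat them in reverse order, since the positivity is the conceptual heart while the amplitude-class membership is a calculation.

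\textbf{Positivity.} The plan is to substitute the defining formula \eqref{p.amplitude} into
\[
(Pu,u)=\int_{G}\!\int_{G}\sum_{[\xi]\in\widehat{G}}d_\xi\,\mathrm{Tr}\bigl(\xi(y^{-1}x)\,p(x,y,\xi)\bigr)\,u(y)\,\overline{u(x)}\,dy\,dx,
\]
and interchange the three integrals (legitimate for $u\in C^\infty(G)$ because the Fourier series and the $z$-integral of the compactly supported $w_\xi$ converge absolutely). Using the factorisation $\xi(y^{-1}x)=\xi(z^{-1}y)^*\xi(z^{-1}x)$, the cyclicity of the trace, and the fact that $w_\xi$ is real and scalar so commutes through everything, one gets
\[
(Pu,u)=\int_{G}\sum_{[\xi]\in\widehat{G}}d_\xi\,\mathrm{Tr}\bigl(A_\xi(z)\,\sigma_A(z,\xi)\,A_\xi(z)^{*}\bigr)\,dz,
\]
where
\[
A_\xi(z):=\int_{G}w_\xi(xz^{-1})\,\xi(z^{-1}x)\,\overline{u(x)}\,dx.
\]
(The identification $(\int w_\xi(yz^{-1})\xi(z^{-1}y)^{*}u(y)\,dy)=A_\xi(z)^{*}$ is a direct matrix-entry computation.) Since $\sigma_A(z,\xi)\geq 0$ by hypothesis, the inner matrix $A_\xi(z)\sigma_A(z,\xi)A_\xi(z)^{*}$ is positive semidefinite for each $z,\xi$, hence has nonnegative trace. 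Summability of the series in $[\xi]$ for $u\in C^\infty(G)$ closes the argument.

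\textbf{Amplitude membership.} Here I would differentiate under the integral and use the symbol classes already available. The key observation is that $\partial_X^{(\beta)}$ acts only on $w_\xi(xz^{-1})$ and $\partial_Y^{(\gamma)}$ only on $w_\xi(yz^{-1})$, while $\Delta_\xi^\alpha$ acts on the $\xi$-dependence of the whole integrand. By the preceding lemma, $w_\xi(x)I_{d_\xi}\in S^{n(\rho+\delta)/4,\,\mathcal{L}}_{\rho,(\rho+\delta)/2}(G\times\widehat{G})$, so each $x$- or $y$-derivative produces at most a factor $\widehat{\mathcal{M}}(\xi)^{(\rho+\delta)/2}$, giving exactly the loss of $(\rho+\delta)/2$ per derivative required by the class $\mathcal{A}^{m,\,\mathcal{L}}_{\rho,(\rho+\delta)/2}$. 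For the difference operators I would apply the (iterated, matrix-valued) Leibniz-type rule from Proposition \ref{Leibnizrule}, splitting $\Delta_\xi^\alpha$ as a sum over distributions of the form $\Delta_\xi^{\alpha_1}$ on $w_\xi(xz^{-1})I_{d_\xi}$, $\Delta_\xi^{\alpha_2}$ on $w_\xi(yz^{-1})I_{d_\xi}$, and $\Delta_\xi^{\alpha_3}$ on $\sigma_A(z,\xi)$, with $|\alpha_1|+|\alpha_2|+|\alpha_3|\geq|\alpha|$; each factor then gains a power $\widehat{\mathcal{M}}(\xi)^{-\rho|\alpha_j|}$ by its own symbol class, which combines to the required gain $\widehat{\mathcal{M}}(\xi)^{-\rho|\alpha|}$. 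The $z$-integration is handled by Cauchy--Schwarz together with $\|w_\xi\|_{L^2}=1$; since $\sigma_A(z,\xi)$ is bounded in $z$ by $\sup_i\langle\nu_{ii}(\xi)\rangle^{m}\lesssim\|\widehat{\mathcal{M}}(\xi)^{m}\|_{\textnormal{op}}$, this produces the final factor of order $m$.

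The main obstacle is purely bookkeeping in step (ii): the combination of (a) the matrix-valued nature of all three factors, (b) the redistribution of difference operators via Leibniz, and (c) ensuring that the gains and losses coming from $w_\xi$'s ``subelliptic shift'' $(\rho+\delta)/(2\kappa)\mapsto(\rho+\delta)/2$ (via the Garetto--Ruzhansky inequality $\langle\xi\rangle^{1/\kappa}\lesssim\langle\nu_{ii}(\xi)\rangle$) match up to land in $\mathcal{A}^{m,\,\mathcal{L}}_{\rho,(\rho+\delta)/2}$ rather than a coarser class. No new analytic input beyond the calculus of Theorem \ref{calculus} and the properties of $w_\xi$ just established is needed; the positivity computation, by contrast, is essentially the one used in \cite{RuzhanskyTurunen2011} on $G$, adapted to the fact that $w_\xi$ is scalar and the amplitude factors cleanly into $A_\xi\,\sigma_A\,A_\xi^{*}$.
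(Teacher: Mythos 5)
Your proposal is correct and follows essentially the same route as the paper: the positivity argument factors $\xi(y^{-1}x)$ through $z$, interchanges integrals, and reduces $(Pu,u)$ to $\int_G\sum_\xi d_\xi\,\mathrm{Tr}(B\,\sigma_A\,B^*)\,dz$ with $B$ built from $w_\xi$ (your $A_\xi(z)$ equals the paper's $M(z,\xi)$ conjugated by $\xi(z)$, and the two traces coincide by cyclicity), while the amplitude estimate proceeds by Leibniz redistribution of $\Delta_\xi^\alpha$ and per-factor bounds from the preceding lemma. The only cosmetic difference is that you invoke Cauchy--Schwarz and $\|w_\xi\|_{L^2}=1$ for the $z$-integration whereas the paper bounds the integrand in $L^\infty$ and uses $\mathrm{meas}(\mathrm{supp}\,w_\xi)\lesssim\langle\xi\rangle^{-n(\rho+\delta)/(2\kappa)}$; both bookkeepings land in the same class.
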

\begin{proof}
Recall that $p\in \mathcal{A}^{m,\,\mathcal{L}}_{\rho,\frac{(\rho+\delta)}{2}}(G\times G\times \widehat{G})$ if
$$ \sup_{(x,y,[\xi])\in G\times G\times  \widehat{G} }\Vert \widehat{ \mathcal{M}}(\xi)^{-m+\rho|\alpha|-\frac{(\rho+\delta)}{2}(|\beta|+|\gamma|)}\partial_{X}^{(\beta)}\partial_{Y}^{(\gamma)} \Delta_{\xi}^{\alpha}p(x,y,\xi)\Vert_{\textnormal{op}} <\infty,$$
and that, by Leibniz rule, $\partial_{X}^{(\beta)}\partial_{Y}^{(\gamma)} \Delta_{\xi}^{\alpha}p(x,y,\xi)$ is a sum of terms of the form
$$ \int\limits_{G}( \Delta_\xi^{\eta} \partial_X^{(\beta)} w_\xi(xz^{-1}))( \Delta_\xi^{\lambda}  \partial_Y^{(\gamma)} w_\xi(yz^{-1}))(\Delta_\xi^\mu\sigma_A(z,\xi))dz,$$
where $|\eta+\lambda+\mu|\geq |\alpha|$. 
Moreover, due to the properties of $w_\xi$, we have
$$\|( \Delta_\xi^{\eta} \partial_X^{(\beta)} w_\xi(xz^{-1}))( \Delta_\xi^{\lambda}  \partial_Y^{(\gamma)} w_\xi(yz^{-1}))\|_{op}$$
$$\leq C  \langle \xi \rangle^{\frac{n(\rho+\delta)}{4\kappa}-\rho|\eta|+\frac{(\rho+\delta)}{2}\frac{|\beta|}{\kappa}}\langle\xi\rangle^{\frac{n(\rho+\delta)}{4\kappa}-\rho|\lambda|+\frac{(\rho+\delta)}{2}\frac{|\gamma|}{\kappa}}$$
$$\leq C\langle \xi \rangle^{\frac{n(\rho+\delta)}{2\kappa}}\sup_{i=1,...,d_\xi}\langle\nu_{ii}(\xi)\rangle^{-\rho(|\lambda|+|\eta|)+\frac{(\rho+\delta)}{2}(|\beta|+|\gamma|)},$$
therefore, since $\mathrm{supp}\, w_\xi=\{z; d(z,e)\leq r\langle \xi\rangle^{-\frac{(\rho+\delta)}{2\kappa}}\}$ is contained in a set of measure $r\langle \xi \rangle^{-\frac{(\rho+\delta) n}{2\kappa}}$,  we get 

$$ \|\int\limits_{G}\widehat{\mathcal{M}}(\xi)^{-\rho|\mu|+\rho|\alpha|-\frac{(\rho+\delta)}{2}(|\beta|+|\gamma|)}( \Delta_\xi^{\eta} \partial_X^{(\beta)} w_\xi(xz^{-1}))( \Delta_\xi^{\lambda}  \partial_Y^{(\gamma)} w_\xi(yz^{-1}))$$
$$\times \widehat{\mathcal{M}}(\xi)^{-m+\rho|\mu|}(\Delta_\xi^\mu\sigma_A(z,\xi))dz\|_{op}$$
$$\leq C\langle\xi\rangle^{-\frac{n(\rho+\delta)}{2\kappa}}\| \Delta_\xi^{\eta} \partial_X^{(\beta)} w_\xi(xz^{-1}) \Delta_\xi^{\lambda}  \partial_Y^{(\gamma)} w_\xi(yz^{-1}))\widehat{\mathcal{M}}(\xi)^{-\rho|\mu|+\rho|\alpha|-
  \frac{(\rho+\delta)}{2}(|\beta|+|\gamma|)}
 \|_{op}$$
$$\times \|\widehat{\mathcal{M}}(\xi)^{-m+\rho|\mu|}(\Delta_\xi^\mu\sigma_A(z,\xi))\|_{op}$$
$$\leq C \langle\xi\rangle^{-\frac{n(\rho+\delta)}{2\kappa}} \langle\xi\rangle^{\frac{n(\rho+\delta)}{2\kappa} }
	\|\langle \xi\rangle^{-\rho(|\lambda|+|\eta|)+\frac{(\rho+\delta)}{2\kappa}(|\beta|+|\gamma|)}\widehat{\mathcal{M}}(\xi)^{-\rho|\mu|+\rho|\alpha|-\frac{(\rho+\delta)}{2}(|\beta|+|\gamma|)}\|_{op}$$

$$\leq C\sup_{i=1,...,d_\xi}\langle\nu_{ii}(\xi)\rangle^{\rho|\alpha|-\frac{(\rho+\delta)}{2}(|\beta|+|\gamma|)-\rho(|\eta|+|\lambda|+|\mu|)+\frac{(\rho+\delta)}{2}(|\beta|+|\gamma|)}
\leq C.$$
Finally, since $\Vert \widehat{ \mathcal{M}}(\xi)^{-m+\rho|\alpha|-\frac{(\rho+\delta)}{2}(|\beta|+|\gamma|)}\partial_{X}^{(\beta)}\partial_{Y}^{(\gamma)} \Delta_{\xi}^{\alpha}p(x,y,\xi)\Vert_{\textnormal{op}}$ is estimated by a sum of terms of the previous form, the first result follows.

To see that $P$ is positive, on denoting 
$$ M(z,\xi):=\int\limits_G w_\xi(yz^{-1})\xi(y z^{-1}) \overline{u(y)}dy,$$
we have
$$ (Pu,u)=\int\limits_G \int\limits_G \sum_{[\xi]\in\widehat{G}} d_\xi \mathrm{Tr}(\xi(x)p(x,y,\xi)u(y)\xi(y)^*dy) \overline{u(x)}dx$$

$$=\int\limits_G \int\limits_G \sum_{[\xi]\in\widehat{G}} d_\xi \mathrm{Tr}\left(\xi(x)   
\int\limits_G w_\xi(xz^{-1}){w_\xi}(yz^{-1})\sigma_A(z,\xi)dz
u(y)\xi(y)^*dy \right)\overline{u(x)}dx$$

$$=\int\limits_G \sum_{[\xi]\in\widehat{G}} d_\xi \mathrm{Tr}\left(  
\left(\int\limits_G \xi(xz^{-1}) w_\xi(xz^{-1})\overline{u(x)}dx\right) \xi(z) \sigma_A(z,\xi)\xi(z)^*\right.$$ $$\left.\times\left(\int\limits_{G}{w_\xi}(yz^{-1})\xi(yz^{-1})^* u(y)dy \right) 
 \right)dz$$

$$=\int\limits_G \sum_{[\xi]\in\widehat{G}} d_\xi \mathrm{Tr}\left(M(z,\xi) \xi(z)\sigma_A(x,\xi) \xi(z)^*M(z,\xi)^* \right)dz, $$
which is non-negative due to the non-negativity of $\sigma_A$ and the fact that the positivity of matrices is invariant under unitary transformations. This concludes the proof.
\end{proof}

\begin{lemma}\label{Lemma1}
Let $s\in\mathbb{R}$ and $p\in \mathcal{A}^{m,\,\mathcal{L}}_{\rho,\frac{(\rho+\delta)}{2}}(G\times G\times \widehat{G})$ as in \eqref{p.amplitude}. Let us assume that $0\leq\delta< \rho\leq 1$. Then the operator with symbol $p(x,x,\xi)-\sigma_A(x,\xi)$ is bounded from $H^{s,\,\mathcal{L}}(G)$ to $H^{s-(m-\frac 1 \kappa(\rho-(2\kappa-1)\delta)),\,\mathcal{L}}(G)$.
\end{lemma}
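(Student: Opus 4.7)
The plan is to prove that $r(x,\xi) := p(x,x,\xi) - \sigma_A(x,\xi)$ lies in the subelliptic class $S^{m',\,\mathcal{L}}_{\rho,\delta}(G\times \widehat G)$ with order $m' = m - \tfrac{1}{\kappa}(\rho - (2\kappa-1)\delta)$, after which the subelliptic Sobolev boundedness theorem stated immediately before Proposition \ref{amplitude} yields the desired mapping $H^{s,\,\mathcal L}(G) \to H^{s-m',\,\mathcal L}(G)$. Since $\|w_\xi\|_{L^2(G)}^2 = 1$, I first rewrite
$$r(x,\xi) = \int_G w_\xi(xz^{-1})^2\,[\sigma_A(z,\xi) - \sigma_A(x,\xi)]\,dz,$$
and apply the group Taylor formula of Lemma \ref{Taylorseries} to $f = \sigma_A(\cdot,\xi)$ at the base-point $x$ with increment $y = x^{-1}z$:
$$r(x,\xi) = \sum_{1\leq|\alpha|<N} c_\alpha(x,\xi)\,\partial_X^{(\alpha)}\sigma_A(x,\xi) + r_N(x,\xi),\qquad c_\alpha(x,\xi) := \int_G w_\xi(xz^{-1})^2\,q_\alpha(z^{-1}x)\,dz.$$

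The decisive point is that $c_\alpha(x,\xi)$ is in fact $x$-independent. Performing the substitutions $w = x^{-1}z$ and $v = x^{-1}wx$, using the left-invariance and conjugation-invariance of Haar together with the centrality of $w_\xi$, reduces $c_\alpha$ to $\int_G w_\xi(w)^2\,q_\alpha(w^{-1})\,dw$. A further symmetrization via $w\mapsto w^{-1}$ and the inversion-invariance of $w_\xi$ gives
$$c_\alpha(\xi) = \frac{1}{2}\int_G w_\xi(w)^2\,[q_\alpha(w) + q_\alpha(w^{-1})]\,dw.$$
Writing $w = \exp Y$ and Taylor-expanding $q_\alpha\circ\exp$ at $0\in\mathfrak g$, the odd-in-$Y$ part of $q_\alpha$ drops out, so $q_\alpha(\exp Y) + q_\alpha(\exp(-Y)) = O(|Y|^{k_\alpha})$ with $k_\alpha := 2\lceil |\alpha|/2\rceil$. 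Since $\mathrm{supp}\,w_\xi \subset \{|Y|\lesssim \langle\xi\rangle^{-(\rho+\delta)/(2\kappa)}\}$ and $\int w_\xi^2 = 1$, this yields the sharp bound
$$|c_\alpha(\xi)| \leq C\,\langle\xi\rangle^{-k_\alpha(\rho+\delta)/(2\kappa)}.$$
The symmetry-induced gain $k_\alpha > |\alpha|$ for odd $|\alpha|$ is what makes the scheme work.

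Combining this with the subelliptic bound $\|\partial_X^{(\alpha)}\sigma_A(x,\xi)\|_{\mathrm{op}} \leq C\sup_i\langle\nu_{ii}(\xi)\rangle^{m+\delta|\alpha|}$ from Lemma \ref{lemadecaying1} and the conversion $\langle\xi\rangle^{-s} \leq C\sup_i\langle\nu_{ii}(\xi)\rangle^{-s}$ for $s\geq 0$ (a consequence of $\sup_i\langle\nu_{ii}\rangle \leq C\langle\xi\rangle$), the $\alpha$-th term has subelliptic order at most $m + \delta|\alpha| - k_\alpha(\rho+\delta)/(2\kappa)$. Maximising over $|\alpha|\geq 1$, the dominant contribution is the one with $|\alpha|=2$ (for which $k_\alpha=2$), of order exactly $m + 2\delta - (\rho+\delta)/\kappa = m - \tfrac{1}{\kappa}(\rho - (2\kappa-1)\delta)$; all other Taylor terms contribute strictly less (the $|\alpha|=1$ term is smaller by $\delta$, while the $|\alpha|\geq 3$ terms decay further thanks to the extra symmetry gain). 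The remainder $r_N$ is handled via the Lemma \ref{Taylorseries} estimate $|R^{\sigma_A(\cdot,\xi)}_{x,N}(x^{-1}z)| \leq C|x^{-1}z|^N \max_{|\beta|\leq N}\|\partial_X^{(\beta)}\sigma_A(\cdot,\xi)\|_{L^\infty(G)}$, together with $|x^{-1}z|\lesssim \langle\xi\rangle^{-(\rho+\delta)/(2\kappa)}$ on $\mathrm{supp}\,w_\xi(x\cdot^{-1})$: it has order $m+\delta N - N(\rho+\delta)/(2\kappa)$, which can be made arbitrarily negative by choosing $N$ large.

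The final step is to upgrade these pointwise bounds to full subelliptic-symbol estimates on $\partial_X^{(\beta)}\Delta_\xi^{\gamma}r(x,\xi)$. The $\partial_X^{(\beta)}$-derivatives act only on $\partial_X^{(\alpha)}\sigma_A$ (since $c_\alpha$ is $x$-free), producing the natural $\delta|\beta|$-shift compatible with $(\rho,\delta)$-type estimates. The difference operators $\Delta_\xi^\gamma$ are distributed by the Leibniz rule of Proposition \ref{Leibnizrule}: those hitting $\partial_X^{(\alpha)}\sigma_A$ generate the expected $-\rho|\gamma|$ gain coming from the subelliptic class of $A$, while those hitting $c_\alpha(\xi)$ (and, for the remainder, the $w_\xi$-factors themselves) are handled using the already-established symbol estimates on $w_\xi I_{d_\xi}\in\mathscr S^{n(\rho+\delta)/(4\kappa)}_{1,(\rho+\delta)/(2\kappa)}(G)$ and on the amplitude $p\in\mathcal A^{m,\mathcal L}_{\rho,(\rho+\delta)/2}$ from Proposition \ref{P}. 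Invoking the subelliptic Sobolev boundedness theorem then gives the mapping property for $\mathrm{Op}(r) = \mathrm{Op}(p(x,x,\xi)) - A$. The main technical obstacle is exactly this last step: propagating the symmetry-induced cancellation and the $\xi$-dependent support size of $w_\xi$ correctly through each application of a difference operator, so as to recover the expected $-\rho|\gamma|$ gain per $\Delta_\xi$ without degrading the order of the leading Taylor contribution identified above.
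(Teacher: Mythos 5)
Your proposal is correct and follows essentially the same route as the paper: rewrite $p(x,x,\xi)-\sigma_A(x,\xi)$ as $\int_G w_\xi(z)^2\bigl(\sigma_A(z^{-1}x,\xi)-\sigma_A(x,\xi)\bigr)\,dz$ using $\|w_\xi\|_{L^2}=1$, kill the first-order Taylor term by the evenness of $w_\xi$ against the odd $q_\gamma$, and read off the dominant order $m+2\delta-(\rho+\delta)/\kappa=m-\tfrac1\kappa(\rho-(2\kappa-1)\delta)$ from the order-two contribution via $|q_\gamma(z)|\lesssim\langle\xi\rangle^{-(\rho+\delta)/\kappa}$ on $\mathrm{supp}\,w_\xi$ together with $\langle\nu_{ii}(\xi)\rangle\lesssim\langle\xi\rangle$, before invoking subelliptic Sobolev boundedness. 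The only immaterial difference is that you expand to arbitrary order $N$ and exploit the parity gain at every odd order, whereas the paper stops at first order and directly estimates the order-two remainder symbols $\sigma_{A,\gamma}\in S^{m+2\delta,\,\mathcal{L}}_{\rho,\delta}$, distributing $\Delta_\xi^\alpha$ and $\partial_X^{(\beta)}$ by the Leibniz rule exactly as you describe.
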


\begin{proof}
By the properties of pseudo-differential operators with symbols in subelliptic classes, in order to prove the theorem it suffices to show that
\begin{equation}\label{CV}
p(x,x,\xi)-\sigma_A(x,\xi)\in S^{m-\frac 1 \kappa(\rho-(2\kappa-1)\delta),\,\mathcal{L}}_{
	\rho ,\max\{\delta,\frac{(\rho+\delta)}{2}\}}(G\times \widehat{G})=S^{m-\frac 1 \kappa(\rho-(2\kappa-1)\delta),\,\mathcal{L}}_{
	\rho ,\frac{(\rho+\delta)}{2}}(G\times \widehat{G}).
\end{equation}
Note that, since $\|w_\xi\|_{L^2(G)}=1$, we have
$$p(x,x,\xi)-\sigma_A(x,\xi)=\int\limits_G w_\xi(xz^{-1})^2\sigma_A(z,\xi)dz-\sigma_A(x,\xi)$$
$$=\int\limits_G w_\xi(z)^2(\sigma_A(z^{-1}x,\xi)-\sigma_A(x,\xi))dz,$$
where, recall, $\mathrm{supp}\, w_\xi =\{z\in G: d(z,e)\leq r \langle \xi\rangle^{-\frac{(\rho+\delta)}{2\kappa}}\}$. Then we consider the Taylor expansion of order one of $\sigma_A(z^{-1}x,\xi)$ with respect to $z^{-1}$ at $z^{-1}=e$, namely,
$$\sigma_A(z^{-1}x,\xi)=\sigma_A(x,\xi)+\sum_{|\gamma|=1}\partial_X^{(\gamma)}\sigma_A(x,\xi)q_\gamma(z)+\sum_{|\gamma|=2}q_\gamma(z)\sigma_{A,\gamma}(z^{-1}x,\xi),$$
 where the last term in the right hand side represents the remainder of Taylor expansion of order 1. Note also that we can choose the polynomials $q_\gamma$ to be odd when $|\gamma|=1$, that is, $q_\gamma(z)=-q_\gamma(z^{-1})$.
By using the expansion above together with the property $\int\limits_G|w_\xi(z)|^2q_\gamma(z)dz=0$ when $|\gamma|=1$ (since $w_\xi$ is even while $q_\gamma$ is odd), we obtain
$$p(x,x,\xi)-\sigma_A(x,\xi)=\int\limits_G w_\xi(z)^2\Big(\sigma_A(x,\xi)-\sigma_A(x,\xi)$$
$$+\sum_{|\gamma|=1}\partial_X^{(\gamma)}\sigma_A(x,\xi)q_\gamma(z)+\sum_{|\gamma|=2}\sigma_{A,\gamma}(z^{-1}x,\xi)q_\gamma(z)\Big)dz$$
$$=\sum_{|\gamma|=2}\int\limits_G \sigma_{A,\gamma}(z^{-1}x,\xi)w_\xi(z)^2q_\gamma(z)dz,$$
where the symbols $\sigma_{A,\gamma}\in S^{m+\delta|\gamma|,\,\mathcal{L}}_{\rho,\delta}(G\times \widehat{G})$ come from the remainder term of Taylor expansion.
Now, by Leibniz rule and the left invariance of $\partial^{(\beta)}_X$, we have that the quantity
$$\partial^{(\beta)}_X\triangle_\xi^\alpha (p(x,x,\xi)-\sigma_A(x,\xi))
=\partial^{(\beta)}_X\triangle_\xi^\alpha \sum_{|\gamma|=2}\int\limits_G \sigma_{A,\gamma}(z^{-1}x,\xi)w_\xi(z)^2q_\gamma(z)dz$$
can be written as a sum of terms of the form
 $$ \int\limits_G (\partial^{(\beta)}_X\triangle_\xi^{\alpha_1}\sigma_{A,\gamma}(z^{-1}x,\xi))(\triangle_\xi^{\alpha_2}w_\xi(z))(\triangle_\xi^{\alpha_3} w_\xi(z))q_\gamma(z)dz$$
\begin{equation}\label{remainder1}
 = \int\limits_G (\partial^{(\beta)}_{Y}\triangle_\xi^{\alpha_1}\sigma_{A,\gamma}(y,\xi))|_{y=z^{-1}x}(\triangle_\xi^{\alpha_2}w_\xi(z))(\triangle_\xi^{\alpha_3} w_\xi(z))q_\gamma(z)dz,
\end{equation}
where $|\gamma|=2$ and $|\alpha_1+\alpha_2+\alpha_3|\geq |\alpha|$.

Since for $\gamma$ such that $|\gamma|=2$ we have $|q_\gamma(z)|\leq C\langle \xi\rangle^{-\frac{(\rho+\delta)}{\kappa}}$ on the support of $ w_\xi$, and since $\mathrm{meas}(\mathrm{supp}( w_\xi))\lesssim \langle \xi\rangle^{-\frac{n(\rho+\delta)}{2\kappa}}$, in view of \eqref{GarettoRuzhanskyIneq} each therm of the form \eqref{remainder1} satisfies

$$ \| \widehat{\mathcal{M}}(\xi)^{-m+\frac{1}{\kappa}(\rho-(2\kappa-1)\delta)+\rho|\alpha|-\max\{\delta, \frac{(\rho+\delta)}{2} \}|\beta|}\int\limits_G (\partial^{(\beta)}_{Y}\triangle_\xi^{\alpha_1}\sigma_{A,\gamma}(y,\xi))|_{y=z^{-1}x}(\triangle_\xi^{\alpha_2}w_\xi(z))$$
$$\times (\triangle_\xi^{\alpha_3} w_\xi(z))q_\gamma(z)dz\|_{op}$$
$$ \leq C \sup_{x\in G}\| \widehat{\mathcal{M}}(\xi)^{-m+\frac{1}{\kappa}(\rho-(2\kappa-1)\delta)+\rho|\alpha|-\frac{(\rho+\delta)}{2}|\beta|}
\partial^{(\beta)}_{X}\triangle_\xi^{\alpha_1}\sigma_{A,\gamma}(x,\xi))
\|_{op} $$
$$\sup_{z_1\in G} \|\triangle_\xi^{\alpha_2} w_\xi(z_1)\|_{op}
\sup_{z_2\in G} \|\triangle_\xi^{\alpha_3} w_\xi(z_2)\|_{op} \int\limits_{\mathrm{supp}\, (w_\xi)} |q_\gamma(z)|\,dz$$
$$\leq C \sup_{i=1,...,d_\xi}\langle\nu(\xi)_{ii}\rangle^{-m+\frac{1}{\kappa}(\rho-(2\kappa-1)\delta)+\rho|\alpha|-\delta|\beta|+m+2\delta+|\beta|\delta-|\alpha_1|\rho}
$$
$$\times \,\, \langle \xi \rangle^{\frac{n(\rho+\delta)}{2\kappa}-(|\alpha_2|+|\alpha_3|)\rho-\frac{n(\rho+\delta)}{2\kappa}-\frac{(\rho+\delta)}{\kappa}} $$
$$\leq C \sup_{i=1,...,d_\xi}\langle\nu(\xi)_{ii}\rangle^{\frac{1}{\kappa}(\rho-(2\kappa-1)\delta)+\rho|\alpha|-\delta|\beta|+2\delta+|\beta|\delta-(|\alpha_1|+|\alpha_2|+|\alpha_3|)\rho-\frac{(\rho+\delta)}{\kappa}}\leq C.
$$
This, finally,  shows \eqref{CV} and concludes the proof.
\end{proof}

\begin{lemma}\label{Lemma2}
Let $s\in \mathbb{R}.$  Let us assume that  $0< \rho\leqslant 1$ and  $0\leq \delta<(2\kappa-1)^{-1}\rho\leq \rho.$ Then the pseudo-differential operator with symbol $\sigma_P(x,\xi)-p(x,x,\xi)$  is bounded from $H^{s,\,\mathcal{L}}(G)$ to $H^{s-(m-\frac 1 \kappa(\rho-(2\kappa-1)\delta)),\,\mathcal{L}}(G)$.
\end{lemma}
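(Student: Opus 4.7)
The amplitude $p$ lies in $\mathcal{A}^{m,\,\mathcal{L}}_{\rho,\delta'}(G\times G\times \widehat{G})$ with $\delta':=(\rho+\delta)/2$ by Proposition \ref{P}, so Proposition \ref{amplitude} yields $P=\textnormal{AOp}(p)=\textnormal{Op}(\sigma_P)$ for some $\sigma_P\in S^{m,\,\mathcal{L}}_{\rho,\delta'}(G\times\widehat{G})$. Since any element of $S^{M,\,\mathcal{L}}_{\rho,\delta'}$ quantizes to a bounded map $H^{s,\,\mathcal{L}}\to H^{s-M,\,\mathcal{L}}$ by the Proposition that immediately precedes this lemma, it suffices to prove the symbol inclusion
\[
\sigma_P(x,\xi)-p(x,x,\xi)\in S^{m-\frac{1}{\kappa}(\rho-(2\kappa-1)\delta),\,\mathcal{L}}_{\rho,\delta'}(G\times \widehat{G}).
\]

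Specializing the asymptotic formula \eqref{asympexp} of Proposition \ref{amplitude} to $N=1$, and noting that the $\alpha=0$ term of the expansion is exactly $p(x,x,\xi)$, I decompose
\[
\sigma_P(x,\xi)-p(x,x,\xi)=\sum_{|\alpha|=1}\bigl(\partial_{Y}^{(\alpha)}\Delta_\xi^{\alpha}p(x,y,\xi)\bigr)\big|_{y=x}+R(x,\xi),
\]
with $R\in S^{m-2(\rho-\delta'),\,\mathcal{L}}_{\rho,\delta'}=S^{m-(\rho-\delta),\,\mathcal{L}}_{\rho,\delta'}$. The algebraic identity $\rho-\delta-\frac{1}{\kappa}(\rho-(2\kappa-1)\delta)=\frac{(\kappa-1)(\rho+\delta)}{\kappa}\ge 0$ places $R$ directly in the target class, so all the work is concentrated in the $|\alpha|=1$ correction terms.

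For each such term I plan to proceed exactly in the spirit of Lemma \ref{Lemma1}: substitute the integral representation \eqref{p.amplitude} of $p$, apply the Leibniz rule of Proposition \ref{Leibnizrule} to distribute $\Delta_\xi^{\alpha}$ over the product $w_\xi(xz^{-1})w_\xi(yz^{-1})\sigma_A(z,\xi)$, carry out the change of variables $z\mapsto z^{-1}x$ (permissible by the centrality and inversion invariance of $w_\xi$ together with the bi-invariance of Haar measure), and then Taylor-expand the resulting shifted factor coming from $\sigma_A$ (or a $\xi$-difference thereof) about the neutral element via Lemma \ref{Taylorseries}. The order-one Taylor contribution vanishes because $w_\xi$ is even whereas the polynomials $q_\gamma$ with $|\gamma|=1$ are odd, and the surviving $|\gamma|=2$ remainder yields the crucial size $|q_\gamma(z)|\lesssim \langle\xi\rangle^{-(\rho+\delta)/\kappa}$ on $\textnormal{supp}(w_\xi)$.

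Combining this parity gain with the elliptic symbol estimates on $\Delta_\xi^{\alpha_i}w_\xi$ inherited from $w_\xi(\cdot)I_{d_\xi}\in \mathscr{S}^{n(\rho+\delta)/(4\kappa)}_{1,(\rho+\delta)/(2\kappa)}(G)$, the subelliptic bounds on the Taylor remainder of $\sigma_A$, the measure estimate $|\textnormal{supp}(w_\xi)|\lesssim \langle\xi\rangle^{-n(\rho+\delta)/(2\kappa)}$, and the two-sided comparison $\langle\xi\rangle^{1/\kappa}\lesssim \langle\nu_{ii}(\xi)\rangle\lesssim \langle\xi\rangle$, reproduces the exact weight gain $\widehat{\mathcal{M}}(\xi)^{-\frac{1}{\kappa}(\rho-(2\kappa-1)\delta)}$ obtained at the end of Lemma \ref{Lemma1}, placing each $|\alpha|=1$ term into the target class. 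The main technical obstacle is the careful bookkeeping of exponents when trading the elliptic $\langle\xi\rangle$-weights carried by $w_\xi$ for the subelliptic $\widehat{\mathcal{M}}(\xi)$-weights demanded by the target class; as in Lemma \ref{Lemma1}, this is precisely where the hypothesis $\delta<(2\kappa-1)^{-1}\rho$ intervenes, ensuring that the net gained order $\frac{1}{\kappa}(\rho-(2\kappa-1)\delta)$ remains strictly positive for every admissible Leibniz splitting $\alpha_1+\alpha_2+\alpha_3\geq \alpha$.
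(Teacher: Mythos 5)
Your overall strategy is sound and close to the paper's: you correctly invoke the amplitude calculus, reduce to the single $|\alpha|=1$ layer of the asymptotic expansion plus a remainder, and verify via the algebraic identity $\rho-\delta-\tfrac{1}{\kappa}(\rho-(2\kappa-1)\delta)=\tfrac{(\kappa-1)(\rho+\delta)}{\kappa}\geq 0$ that the remainder already sits in the target class. However, the parity argument you give for the $|\alpha|=1$ correction is misapplied, and it is the crux of the lemma.

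The integral you must estimate for $|\alpha|=1$ has weight $w_\xi(z)\,\partial_Z^{(\alpha)}w_\xi(z)$, which is \emph{odd} (even $\times$ odd), not even as in Lemma \ref{Lemma1} where the weight is $w_\xi(z)^2$. Consequently, the cancellation occurs at the \emph{zeroth} Taylor order, not the first: one has $\int_G w_\xi(z)\partial_Z^{(\alpha)}w_\xi(z)\,dz=0$ by parity, which kills the term multiplying $\sigma_A(x,\xi)$. The first-order Taylor contribution involves $q_\gamma(z)$ with $|\gamma|=1$ (odd), so $w_\xi\,\partial^{(\alpha)}w_\xi\,q_\gamma$ is even and this contribution does \emph{not} vanish; it is precisely this term that survives and must be estimated. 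The correct gain on $\mathrm{supp}(w_\xi)$ is therefore $|q_\gamma(z)|\lesssim\langle\xi\rangle^{-(\rho+\delta)/(2\kappa)}$ for $|\gamma|=1$, not $\langle\xi\rangle^{-(\rho+\delta)/\kappa}$ for $|\gamma|=2$ as you claim. The paper handles exactly this by Taylor-expanding $\sigma_A(z^{-1}x,\xi)$ only to order zero, splitting the result into a piece $I$ (which vanishes for $|\beta|=1$ by parity) and a remainder $J$ carrying the $|\gamma|=1$ factor, and then verifying that the resulting exponent $-\tfrac{(\rho+\delta)}{2\kappa}-\rho(|\eta|+|\lambda|+|\mu|)+\tfrac{(\rho+\delta)}{2}|\beta|+\tfrac{1}{\kappa}(\rho-(2\kappa-1)\delta)$ is nonpositive under the hypothesis $\delta<(2\kappa-1)^{-1}\rho$. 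If you carried out your estimate as described, assuming the $|\gamma|=1$ contribution vanishes and only bounding a $|\gamma|=2$ remainder, you would be omitting the dominant term; the fact that the final conclusion still holds is an accident of the paper's estimate on the $|\gamma|=1$ piece, which you have not actually performed.
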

\begin{proof}

As in the proof of Lemma \ref{Lemma2} we need to show that
\begin{equation}\label{CV2}
\sigma_P(x,\xi)-p(x,x,\xi)\in S^{m-\frac 1 \kappa(\rho-(2\kappa-1)\delta),\,\mathcal{L}}_{\rho, \max\{\delta,\frac{(\rho+\delta)}{2}\}}=S^{m-\frac 1 \kappa(\rho-(2\kappa-1)\delta),\,\mathcal{L}}_{\rho, \frac{(\rho+\delta)}{2}}.
\end{equation}
 To prove \eqref{CV2} we use the asymptotic expansion in \eqref{asympexp}, which implies that
 
 $$\sigma_P(x,\xi)-p(x,x,\xi)\sim \sum_{|\beta|\geq 1}\partial_Y^{(\beta)}\Delta^\beta_\xi p(x,y,\xi)|_{y=x}.$$
The asymptotic formula above means that, for all $N\in \mathbb{N}_0$,
 $$\sigma_P(x,\xi)-p(x,x,\xi)=\sum_{1\leq|\beta|\leq N}\Delta_\xi^\beta\int\limits_G w_\xi(z)(\partial_Z^{(\beta)} w_\xi(z))\sigma_{A}(z^{-1}x,\xi)dz+r_{N}(x,\xi),$$
 with $r_N(x,\xi)\in S^{m-\frac{(\rho+\delta)}{2}(N+1),\,\mathcal{L}}_{\rho,\frac{(\rho+\delta)}{2}}(G\times \widehat{G})$. Let $N\geq 1$ and define $$S_{N}(x,\xi):=\sigma_P(x,\xi)-p(x,x,\xi)-r_{N}(x,\xi).$$
 Now we expand $\sigma_{A}(z^{-1}x,\xi)$ by using Taylor expansion with respect to $z^{-1}$ at $z^{-1}=e$ as in the proof of Lemma \ref{Lemma1}, and have
 $$S_{N}(x,\xi)=\sum_{1\leq|\beta|\leq N}\Delta_\xi^\beta\int\limits_G w_\xi(z)(\partial_Z^{(\beta)} w_\xi(z))dz \, \sigma_{A}(x,\xi)$$
 $$+\sum_{|\gamma|=1}\sum_{1\leq|\beta|\leq N}\Delta_\xi^\beta\int\limits_G w_\xi(z)(\partial_Z^{(\beta)} w_\xi(z))q_{\gamma}(z) \, \sigma_{A,\gamma}(z^{-1}x,\xi)dz$$
 $$=I(x,\xi)+J(x,\xi),$$ 
where $\sigma_{A,\gamma}\in S^{m+\delta|\gamma|,\,\mathcal{L}}_{\rho,\delta}(G\times\widehat{G})$ ($|\gamma|=1$) comes, as before, from the remainder term of Taylor expansion. To have \eqref{CV2} we need $I,J$ and $r_N$ to belong to the subelliptic class in \eqref{CV2}. We then analyse the three terms separately starting from $I$.

Note that, when $|\beta|=1$, $w_\xi$ and $\partial^{(\beta)}_X w_\xi$ are even and odd respectively (see Proposition 3.11 in \cite{RuzhanskyTurunen2011}), so we have
$$\int\limits_G w_\xi(z)(\partial_Z^{(\beta)} w_\xi(z))dz =0, \quad \text{for}\,\,|\beta|=1,$$ and 
$$I(x,\xi)=\sum_{2\leq|\beta|\leq N}\Delta_\xi^\beta\int\limits_G w_\xi(z)(\partial_Z^{(\beta)} w_\xi(z))dz \, \sigma_{A}(x,\xi).$$
 In particular $I(x,\xi)$ will be given by a sum of terms of the form
 \begin{equation}\label{termI}
     \sum_{2\leq|\beta|\leq N}\int\limits_G (\Delta_\xi^\eta w_\xi(z))(\Delta_\xi^\lambda \partial_Z^{(\beta)} w_\xi(z))dz \, \Delta_\xi^\mu\sigma_{A}(x,\xi),
  \end{equation}
 with $|\eta+\lambda+\mu|\geq |\beta|$. Due to the measure of the support of $w_\xi$, each term of the form \eqref{termI} satisfies 
 
 \begin{equation}\label{est1}
 \left|\int\limits_G (\Delta_\xi^\eta w_\xi(z))(\Delta_\xi^\lambda \partial_Z^{(\beta)} w_\xi(z))dz \right| \leq C \langle \xi\rangle^{-\frac{(\rho+\delta)}{2\kappa}n}
 \langle \xi\rangle^{\frac{n(\rho+\delta)}{4\kappa}-\rho|\eta|}
 \langle \xi\rangle^{\frac{n(\rho+\delta)}{4\kappa}-\rho|\lambda|+\frac{(\rho+\delta)}{2\kappa}|\beta|}.
 \end{equation}
 Then for $I$ we get
 $$\| \widehat{\mathcal{M}}(\xi)^{-m+\frac 1 \kappa(\rho-(2\kappa-1)\delta)} I(x,\xi) \|_{op}\leq C \sum_{2\leq|\beta|\leq N}\,\, \sum_{\lambda,\eta,\mu; |\eta+\lambda+\mu|\geq |\beta|}\| \widehat{\mathcal{M}}(\xi)^{-m+\rho|\mu|}\Delta_\xi^\mu\sigma_{A}(x,\xi) \|_{op}$$
 $$ \times\langle\xi \rangle^{-\rho(|\lambda|+|\eta|)+\frac{(\rho+\delta)}{2\kappa}|\beta|} \| \widehat{\mathcal{M}}(\xi)^{-\rho|\mu|+\frac 1 \kappa(\rho-(2\kappa-1)\delta)} \|_{op}$$
 $$\leq C\sum_{2\leq|\beta|\leq N}\,\, \sum_{\lambda,\eta,\mu; |\eta+\lambda+\mu|\geq |\beta|} \sup_{i=1,...,d_\xi}\langle \nu_{ii}(\xi)\rangle^{-\rho(|\lambda|+|\eta|+|\mu|)+\frac{(\rho+\delta)}{2}|\beta|+\frac 1 \kappa(\rho-(2\kappa-1)\delta)}\lesssim C,$$
 since $$-\rho(|\lambda|+|\eta|+|\mu|)+\frac{(\rho+\delta)}{2}|\beta|=-\rho(|\lambda|+|\eta|+|\mu|-|\beta|) + \frac{\delta-\rho}{2}|\beta|$$
 $$ < \frac{\delta-\rho}{2}|\beta|\leq \delta-\rho<0, $$ and $\delta-\rho+\frac{1}{\kappa}(\rho-(2\kappa-1)\delta)\leq 0$ for $\delta$ such that $\rho-(2\kappa-1)\delta>0$.
 Finally, by using the same kind of estimates as above in combination with Leibniz rule (see Remark \ref{remarkD}), we can also conclude the more general estimate
 $$\| \widehat{\mathcal{M}}(\xi)^{-m+\frac 1 \kappa(\rho-(2\kappa-1)\delta)+|\alpha|\rho+|\beta|\max\{\delta, \frac{(\rho+\delta)}{2} \}}\partial_X^{(\beta)}\triangle_\xi^\alpha I(x,\xi) \|_{op}\leq C$$
 which, in particular, gives that $I(x,\xi)\in S^{m-\frac 1 \kappa(\rho-(2\kappa-1)\delta),\,\mathcal{L}}_{\rho, \max\{\delta,\frac{(\rho+\delta)}{2}\}}=S^{m-\frac 1 \kappa(\rho-(2\kappa-1)\delta),\,\mathcal{L}}_{\rho, \frac{(\rho+\delta)}{2}}$.
 
 We now consider the term $J(x,\xi)$. In this case we have
 
 $$\| \widehat{\mathcal{M}}(\xi)^{-m+\frac 1 \kappa(\rho-(2\kappa-1)\delta)} J(x,\xi) \|_{op}\leq C 
 \sum_{1\leq|\beta|\leq N}\,\, \sum_{\lambda,\eta,\mu; |\eta+\lambda+\mu|\geq |\beta|}\sum_{\gamma, |\gamma|=1}
 \| \widehat{\mathcal{M}}(\xi)^{-m+\frac 1 \kappa(\rho-(2\kappa-1)\delta)}$$
 $$ \int\limits_G (\Delta_\xi^\eta w_\xi(z))(\Delta_\xi^\lambda \partial_Z^{(\beta)} w_\xi(z))q_\gamma(z) \, \Delta_\xi^\mu\sigma_{A,\gamma}(z^{-1}x,\xi)dz\|_{op},
 $$
 so, as for $I(x,\xi)$, we prove that each term in the sum is bounded.
 Arguing as before (and using the fact that $(\Delta_\xi^\eta w_\xi(z))(\Delta_\xi^\lambda \partial_Z^{(\beta)} w_\xi(z))$ commutes with $\Delta_\xi^\mu\sigma_{A,\gamma}(z^{-1}x,\xi)$) we get 
 $$\| \widehat{\mathcal{M}}(\xi)^{-m+\frac 1 \kappa(\rho-(2\kappa-1)\delta)}
 \int\limits_G (\Delta_\xi^\eta w_\xi(z))(\Delta_\xi^\lambda \partial_Z^{(\beta)} w_\xi(z))q_\gamma(z) \Delta_\xi^\mu\sigma_{A,\gamma}(z^{-1}x,\xi)dz\|_{op}$$
 $$\leq C \sup_{i=1,...,d_\xi}\langle \nu_{ii}(\xi)\rangle^{-\frac{(\rho+\delta)}{2\kappa}-\rho(|\eta|+|\lambda|+|\mu|)+\frac{(\rho+\delta)}{2}|\beta|+\frac 1 \kappa(\rho-(2\kappa-1)\delta)}$$
 $$\leq C \sup_{i=1,...,d_\xi}\langle \nu_{ii}(\xi)\rangle^{-\frac{(\rho+\delta)}{2\kappa}-\rho(|\eta|+|\lambda|+|\mu|-|\beta|)+\frac{(\delta-\rho)}{2}|\beta|+\frac 1 \kappa(\rho-(2\kappa-1)\delta)}\leq C.$$
The more general estimate (for $|\alpha|,|\beta|\neq 0$)
$$\| \widehat{\mathcal{M}}(\xi)^{-m+\frac 1 \kappa(\rho-(2\kappa-1)\delta)+|\alpha|\rho+|\beta|\max\{\delta, \frac{(\rho+\delta)}{2} \}}\partial_X^{(\beta)}\triangle_\xi^\alpha J(x,\xi) \|_{op}\leq C$$
giving that $J\in S^{m-\frac 1 \kappa(\rho-(2\kappa-1)\delta),\,\mathcal{L}}_{\rho, \max\{\delta,\frac{(\rho+\delta)}{2}\}}(G\times \widehat{G})=S^{m-\frac 1 \kappa(\rho-(2\kappa-1)\delta),\,\mathcal{L}}_{\rho, \frac{(\rho+\delta)}{2}}(G\times \widehat{G})$ can be derived by following the same steps as in the proof of Lemma \ref{Lemma1}, therefore we omit the proof.

We are now left with the study of $r_N$. In this case, by the properties of the remainder in the asymptotic formula, we have that, for all $N\in \mathbb{N}$, $r_N\in S^{m-\frac{(\rho+\delta)}{2}(N+1),\,\mathcal{L}}_{\rho,\frac{(\rho+\delta)}{2}}(G\times \widehat{G})$, for all $0\leq \delta<\rho\leq 1$ such that $\rho-(2\kappa-1)\delta>0$.
This shows \eqref{CV2} and concludes the proof.
\end{proof}

 \begin{proof}[Proof of Theorem \ref{MainTheorem}] Let us consider the amplitude operator $P$ in Proposition \ref{P}. In view of Remark \ref{SerenaRemark2}, it suffices to prove that  $$Q: H^{\frac{m-\frac 1 \kappa(\rho-(\kappa-1)\delta)}{2},\,\mathcal{L}}(G)\rightarrow H^{\frac{-(m-\frac 1 \kappa(\rho-(\kappa-1)\delta))}{2},\,\mathcal{L}}(G)$$ is a bounded operator.
To show this property it is enough to observe that
 $$\sigma_Q(x,\xi):=(\sigma_{A}(x,\xi)-p(x,x,\xi))+(p(x,x,\xi)-\sigma_{P}(x,\xi)),$$
 therefore, by Lemma \ref{Lemma1} and Lemma \ref{Lemma2}, the required boundedness of $Q$ follows.
This completes the proof of the subelliptic sharp G\r{a}rding inequality.
\end{proof}

\section{Final remarks}\label{FinalRemarks}
Due to the statement of our main result some remarks are in order.

Considering the general version of the Euclidean sharp G\r{a}rding inequality for $(\rho,\delta)$-classes, and its correspondent in the compact Lie group setting given by Theorem \ref{MainTheorem} in the case $\kappa=1$, that is Corollary \ref{Corolario}, one would expect that the suitable version of the subelliptic sharp G\r{a}rding inequality should be as follows:
\medskip

\noindent{\textbf{Conjecture. }}{\it Let $G$ be a compact Lie group and let $\mathcal{L}=\mathcal{L}_X$ be the (positive) sub-Laplacian associated with a system  $X=\{X_{i}\}_{i=1}^{k}$ of left-invariant vector fields  satisfying H\"ormander's condition of step $\kappa$. For  $0\leq \delta<\rho\leq 1$ and $m\in \mathbb{R}$, let $A\equiv a(x,D):C^\infty(G)\rightarrow\mathscr{D}'(G)$ be a continuous linear operator with symbol  $a\in {S}^{m,\,\mathcal{L}}_{\rho,\delta}( G\times \widehat{G})$. Then, if $a(x,[\xi])\geq 0$ for all $(x,[\xi])\in G\times \widehat{G}$, there exists a positive constant $C$ such that
\begin{equation}\label{Remark:SGIT}
    \mathsf{Re}(Au,u)\geq -C\Vert u\Vert_{H^{\frac{m-(\rho-\delta)}{2},\,\mathcal{L}}(G)}^2,
\end{equation}
for all $u\in C^{\infty}(G)$.}
\medskip

The subelliptic setting and the noncommutativity of the group have not allowed us to prove the expected version \eqref{Remark:SGIT} of the subelliptic sharp G\r{a}rding inequality. However, it is worth to make some considerations to stress the importance and nontriviality of our result.

Let us first recall that some inclusions between global subelliptic classes and standard global classes can be established. Specifically, one has that
\begin{itemize}
    \item[1.] If $m>0$, $0\leq\delta<\frac{\rho}{\kappa}$, and $\rho\leq 1$,  then $S^{m,\mathcal{L}}_{\rho,\delta}\subset \mathscr{S}^{m}_{\frac{\rho}{\kappa},\delta}(G)$.
    \item[2.] If $m\leq0$, $0\leq\delta<\frac{\rho}{k}$, and $\rho\leq 1$,  then $S^{m,\mathcal{L}}_{\rho,\delta}\subset \mathscr{S}^{\frac{m}{\kappa}}_{\frac{\rho}{k},\delta}(G)$.
\end{itemize}
Three immediate consequences of the inclusions above and of Corollary \ref{Corolario} are described below.
\medskip

{\it Consequence 1.} Let $m>\frac{\rho}{\kappa}-\delta>0$, $0\leq\delta<\frac{\rho}{\kappa}\leq \frac{1}{\kappa}$, and $a\in S^{m,\,\mathcal{L}}_{\rho,\delta}( G\times \widehat{G})$. Then, if $a(x,[\xi])\geq 0$ for all $(x,[\xi])\in G\times \widehat{G}$, there exists a positive constant $C$ such that
\begin{equation}\label{Rem1}
    \mathsf{Re}(Au,u)\geq -C\Vert u\Vert_{H^{\frac{m-(\frac{\rho}{\kappa}-\delta)}{2}}(G)}^2\geq -C\Vert u\Vert_{H^{\frac{\kappa m-(\frac{\rho}{\kappa}-\delta)}{2},\,\mathcal{L}}(G)}^2,
\end{equation}
for all $u\in C^{\infty}(G)$.
\medskip

{\it Consequence 2.} Let $0<m\leq \frac{\rho}{\kappa}-\delta$, $0\leq\delta<\frac{\rho}{\kappa}\leq \frac{1}{\kappa}$, and $a\in S^{m,\,\mathcal{L}}_{\rho,\delta}( G\times \widehat{G})$. Then, if $a(x,[\xi])\geq 0$ for all $(x,[\xi])\in G\times \widehat{G}$, there exists a positive constant $C$ such that
\begin{equation}\label{Rem2}
    \mathsf{Re}(Au,u)\geq -C\Vert u\Vert_{H^{\frac{m-(\frac{\rho}{\kappa}-\delta)}{2}}(G)}^2\geq -C\Vert u\Vert_{H^{\frac{m-(\frac{\rho}{\kappa}-\delta)}{2},\,\mathcal{L}}(G)}^2,
\end{equation}
for all $u\in C^{\infty}(G)$.
\medskip

{\it Consequence 3.} Let $m\leq 0$, $0\leq\delta<\frac{\rho}{\kappa}\leq \frac{1}{\kappa}$, and $a\in S^{m,\,\mathcal{L}}_{\rho,\delta}( G\times \widehat{G})$. Then, if $a(x,[\xi])\geq 0$ for all $(x,[\xi])\in G\times \widehat{G}$, there exists a positive constant $C$ such that
\begin{equation}\label{Rem3}
    \mathsf{Re}(Au,u)\geq -C\Vert u\Vert_{H^{\frac{\frac m \kappa -(\frac{\rho}{\kappa}-\delta)}{2}}(G)}^2\geq -C\Vert u\Vert_{H^{\frac{\frac m \kappa-(\frac{\rho}{\kappa}-\delta)}{2},\,\mathcal{L}}(G)}^2,
\end{equation}
for all $u\in C^{\infty}(G)$.
\medskip

The previous inequalities, namely \eqref{Rem1}, \eqref{Rem2} and \eqref{Rem3}, show that Theorem \ref{MainTheorem} for subelliptic classes ($\kappa\neq 1$) does not follow from Theorem \ref{MainTheorem} applied to standard global classes ($\kappa=1$), that is, from our Corollary \ref{Corolario}. In other words, the subelliptic sharp G\r{a}rding inequality is not a consequence of the so called {\it elliptic} sharp G\r{a}rding inequality.  Note also that Theorem \ref{MainTheorem} applied to subelliptic classes gives results better than those in \eqref{Rem1}, \eqref{Rem2} and \eqref{Rem3}.

To conclude, let us say that it is natural to conjecture that \eqref{Remark:SGIT} holds true in the subelliptic setting. Of course one can find particular operators for which \eqref{Remark:SGIT} is satisfied. An immediate example is given by operators of the form $A=a(x)\mathcal{L}$, where $a(x)$ is a nonnegative smooth function and $\mathcal{L}$ is a positive sub-Laplacian. However, this is a very specific operator for which the validity of \eqref{Remark:SGIT} follows from simple and direct computations.
For the sake of completeness we will briefly prove this fact below.
\medskip

Note that we can write
\begin{align*}
\mathsf{Re}(a(x)\mathcal{L}u,u)&=\mathsf{Re}\big(\big[M_a,\mathcal{L}^{1/2}\big]\mathcal{L}^{1/2}u,u\big)+\mathsf{Re}(\mathcal{L}^{1/2}a\mathcal{L}^{1/2}u,u),\\
&=\big(\big[M_a,\mathcal{L}^{1/2}\big]\mathcal{L}^{1/2}u,u\big)+\|\sqrt{a}\mathcal{L}^{1/2}u\|^2_{L^2(G)},
\end{align*}
where $\big[M_a,\mathcal{L}^{1/2}\big]$ stands for the commutator between the multiplicative operator $M_af:=a f$ and $\mathcal{L}^{1/2}$. In this particular case (it is not true in general in our non Euclidean setting), we have that $\big[M_a,\mathcal{L}^{1/2}\big]$ is of subelliptic order $0$, namely its symbol belongs to the class $S^{0,\mathcal{L}}_{1,0}(G\times\widehat{G})$. This allows us to estimate $\mathsf{Re}\big(\big[M_a,\mathcal{L}^{1/2}\big]\mathcal{L}^{1/2}u,u\big)=
\mathsf{Re}\big(\mathcal{L}^{-1/4}\big[M_a,\mathcal{L}^{1/2}\big]\mathcal{L}^{1/2}u,\mathcal{L}^{1/4}u\big)
\leq C \|u\|_{1/2,\mathcal{L}}^2$, for some $C>0$, and to conclude that
$$\mathsf{Re}(a(x)\mathcal{L}u,u)\geq -C \|u\|_{1/2,\mathcal{L}}^2,\quad \forall u\in C^\infty (G),$$
where the index $\frac 1 2=\frac{m-1}{2}$ ($m=2$) is ``optimal''.
\medskip
    
Of course, by using the steps above, we have that \eqref{Remark:SGIT} holds true for any operator of the form $A=a(x)\mathcal{L}^{m}$ with $m\in \mathbb{R}$.

That said, it is important to underline once more that the noncommutative structure plays a determinant role in the validity of fundamental a priori estimates such as the one studied here, therefore it would not be surprising if the expected optimal result in \eqref{Remark:SGIT} can be attained only under very particular conditions and/or with tools still to be developed.

\bibliographystyle{amsplain}

\end{document}